\theoremstyle{definition}
\newcommand{\Z}{\mathbb{Z}}
\newcommand{\Q}{\mathbb{Q}}
\newcommand{\C}{\mathbb{C}}
\newcommand{\R}{\mathbb{R}}
\newcommand{\G}{\mathbb{G}}
\newcommand{\X}{\mathbb{X}}
\renewcommand\@makefntext{\parindent=1zw
 \leftskip=2zw 
 \noindent \llap{\hss \@makefnmark } }
\newtheorem{Def}{Definition}[section]
\newtheorem{Thm}[Def]{Theorem}
\newtheorem{Prop}[Def]{Proposition}
\newtheorem{Lem}[Def]{Lemma}
\newtheorem{Cor}[Def]{Corollary}
\newtheorem{Rmk}[Def]{Remark}
\newtheorem*{Main Theorem}{Main Theorem 1}
\title[$p$-adic Hecke $L$-functions and $p$-adic Eisenstein-Kronecker Series]
{$p$-adic Eisenstein-Kronecker Series and Non-Critical Values of $p$-adic Hecke $L$-function
of an Imaginary Quadratic Field when the Conductor is Divisible by $p$}
\author[Tomoki Hirotsune]{Tomoki Hirotsune}
\address{Department of Mathematics, Keio University, 3-14-1 Kouhoku-ku, Hiyoshi, Yokohama, 223-8522 Japan}
\email{bibun.to.sekibun@gmail.com}
\date{}                                           
\begin{document}
\begin{abstract}
We relate non-critical special values $p$-adic $L$-functions associated to algebraic Hecke characters of an imaginary quadratic number field with class number one
to $p$-adic Coleman function called the $p$-adic Eisenstein-Kronecker series, when the conductors of the algebraic Hecke characters are divisible by $p$.
\end{abstract}

\maketitle
\tableofcontents

\section{Introduction}
Let $p$ be a rational prime number. 
The purpose of this article is to relate non-critical values of the $p$-adic $L$-functions associated to algebraic Hecke characters whose conductors are divisible by $p$ 
of an imaginary quadratic field with class number 1 to $p$-adic Eisenstein-Kronecker series.
We expect to use this result in the future to consider the $p$-adic Beilinson conjecture for the corresponding Hecke character.
The Beilinson conjectures about special values of $L$-functions are a vast generalization of the class number formula for Dedekind zeta function (see \cite{Beilinson}),
which state that non-critical values of $L$-functions of an algebraic variety
can be expressed using invariants arising from the Beilinson regulator map.
More generally, these conjectures can be formulated for motives.
The $p$-adic Beilinson conjectures are the $p$-adic analogues of the Beilinson conjectures, which 
state that non-critical values of $p$-adic $L$-functions of an algebraic variety may be concretely expressed by invariants arising from the syntomic regulator.
The $p$-adic Beilinson conjectures were formulated and proved by Gros in the case of Dirichlet motives (\cite{Gros1}, \cite{Gros2})
and were generalized to $p$-adic $L$-functions of motives by Perrin-Riou (see \cite{Perrin} \S4.2).
For $p$-adic $L$-functions of Abelian Artin motives, Coleman related special values of these $p$-adic $L$-functions to $p$-adic polylogarithms defined
by using his theory of $p$-adic integration (\cite{Col1}). 
The polylogarithms have a motivic interpreter defined by Beilinson and Deligne.  
In other words, the Beilinson conjectures and their $p$-adic analogues suggest that special values of $L$-functions and $p$-adic $L$-functions may be expressed by using
motivic elements.

The $p$-adic $L$-functions of algebraic Hecke characters of our case was first constructed by Manin and Vi\v{s}ik
\cite{MaVi}, N. Katz \cite{Katz2}, R. I. Yager \cite{Yag}, and de Shalit \cite{dS}.
Bannai and Kobayashi gave a different construction of these $p$-adic $L$-functions using the 
Kronecker theta functions associated to the Poincar\'e bundle  (see \cite{BKT} \S2). 
In addition, Bannai, Kobayashi, and Tsuji expressed the elliptic polylogarithms on an elliptic curve with complex multiplication
in terms of the ``Eisenstein-Kronecker series" (for details, see \cite{Zag}), 
by using the Kronecker theta functions (see \cite{BK} Theorem 1.17).
Deuring's theorem indicates that $L$-functions of an elliptic curve with complex multiplication by the integer ring of an imaginary quadratic field 
can be expressed as Hecke $L$-functions of an imaginary quadratic field.
Then the $p$-adic $L$-functions of an elliptic curve with complex multiplication become the $p$-adic $L$-functions of an imaginary quadratic field
associated to algebraic Hecke characters.

When the conductors of algebraic Hecke characters are not divisible by $p$, Bannai, Kobayashi, and Tsuji related non-critical values of
$p$-adic $L$-functions of an imaginary quadratic field with class number 1
associated to algebraic Hecke characters to $p$-adic Eisenstein-Kronecker series by the measure construction with the connection functions (see \cite{BKT} Proposition 2.27).
In addition, they calculated  the syntomic regulator in terms of $p$-adic Eisenstein-Kronecker series (see \cite{BKT} \S4.3) in order to construct and explicitly
calculate the $p$-adic elliptic polylogarithm,
and expressed concretely non-critical values of $p$-adic $L$-functions associated to algebraic Hecke characters whose conductors are not divisible by $p$
with $p$-adic elliptic polylogarithm class 
in the rigid syntomic cohomology (see \cite{BKT} \S5.3).  In our paper, we extend the result (\cite{BKT} Proposition 2.27) of Bannai, Kobayashi, and Tsuji 
to Hecke characters whose conductors are divisible by $p$.
In future research, we hope to apply our result to consider the $p$-adic Beilinson conjecture for algebraic Hecke characters, extending the work of \cite{BKi}
to the case for characters whose conductors are divisible by $p$.

Let $K$ be an imaginary quadratic field and $E$ be an elliptic curve defined over $K$ with complex multiplication
by the integer ring $\mathcal{O}_{K}$ of $K$. 
Then $K$ has class number 1 since the class number of $K$ equals to $[K(j(E)):K]$
for $j$-invariant $j(E)$ of $E$ by (\cite{Sil2} II \S4 Theorem 4.3).
We fix a Weierstrass model $E$ by
$$
E: y^2=4x^3-g_{2}x-g_{3}\quad (g_2, g_3\in \mathcal{O}_{K})
$$
defined over $\mathcal{O}_{K}$. 
We assume that $E$ has a good ordinary reduction at a prime above $p\geq 5$.
Let $\Gamma$ be the period lattice corresponding to the invariant differential $\omega=dx/y$ obtained by the uniformization theorem.
Then we have $E(\mathbb{C})\cong \mathbb{C}/\Gamma$.  By (\cite{Sil1} VI Theorem 4.1(a)) we have
$\operatorname{End}(E(\mathbb{C}))\cong \{\alpha\in \mathbb{C}|\  \alpha\Gamma\subset \Gamma \}$. 
In addition, since the elliptic curve $E$ has complex multiplication by $\mathcal{O}_{K}$, we have
$\mathcal{O}_{K}\Gamma=\Gamma$.
Hence there exists \textbf{a complex period} $\Omega\in \mathbb{C}^{\times}$ satisfying $\Gamma=\Omega \mathcal{O}_{K}$.
Let $\psi:=\psi_{E/K}: \mathbb{A}_{K}^{\times}/K^{\times}\to \mathbb{C}^{\times}$
be the Hecke character of $K$ associated to $E$, 
where $\mathbb{A}_{K}^{\times}$ is the id\`ele group of  $K$ (\cite{Sil2} II Theorem 9.2).
Let $\mathfrak{p}$ be a prime ideal of $\mathcal{O}_{K}$ above $p$.
Since $E$ has a good reduction at a prime above $p$, by (\cite{Sil2} II Theorem 9.2), 
the Hecke character $\psi$ of $K$ associated to $E$ is unramified, i.e. $\psi(\mathcal{O}_{K_{\mathfrak{p}}}^{\times})=1$,
where $K_{\mathfrak{p}}$ is the completion at $\mathfrak{p}$ and $\mathcal{O}_{K_{\mathfrak{p}}}$ is its integer ring.
Then we define $\psi(\mathfrak{p})$ to be 
$$
\psi(\mathfrak{p}):=\psi(...,1,1,\underset{\substack{\uparrow \\ \mathfrak{p}-\text{th}}}{\pi},1,1,...).
$$ 
where $\pi$ is a uniformizer at $\mathfrak{p}$. Since $\psi$ is unramified at $\mathfrak{p}$, $\psi(\mathfrak{p})$ is well-defined independent of the choice of $\mathfrak{p}$.
From the assumption that $E$ is ordinary at a prime above $p$, 
$p$ splits $(p)=\mathfrak{p}\overline{\mathfrak{p}}$ by (\cite{Lan2} Chapter 10. \S4 Theorem 10), where $\overline{\mathfrak{p}}$
is the complex conjugation of $\pi$.
Since $\psi(\mathfrak{p})$ is the value of $\psi$ at an id\`ele with 1's in its archimedean components, we have $\psi(\mathfrak{p})\in \mathcal{O}_{K}$.
Then we have $\pi=\psi(\mathfrak{p})$ and
$p=\pi \bar{\pi}$ ($\bar{\pi}$ is the complex conjugation of $\pi$). 
Now we take an immersion $\overline{K}\hookrightarrow \mathbb{C}_{p}$ satisfying $|\pi|<1$ in $\mathbb{C}_{p}$.\\
\quad For a natural number $N$,
we let $\mathfrak{g'}:=\mathfrak{g}p^N$ be an integral ideal of $\mathcal{O}_{K}$ which is divisible by the conductor $\mathfrak{f}$ of $\psi$,
where $\mathfrak{g}=(g)$ is the integral ideal of $\mathcal{O}_{K}$ prime to $p$.\\
\quad  Let $I(\mathfrak{g}')$ is a group of fractional ideals of $\mathcal{O}_{K}$ prime to $\mathfrak{g}'$.
 Let $\varphi: I(\mathfrak{g}')\to \overline{K}^{\times}$ be an algebraic Hecke character of infinite type $(m,n)\in \Z^{2}$
 whose conductor divides $\mathfrak{g}'$: In other words, $\varphi$ is the group homomorphism satisfying
 $$
 \varphi((\alpha)):=\chi(\alpha)\alpha^{m}\overline{\alpha}^{n}\quad \text{for any}\ \alpha\in \mathcal{O}_{K}\ \text{prime to}\ \mathfrak{g}' 
 $$
for some finite character $\chi: (\mathcal{O}_{K}/\mathfrak{g}')^{\times}\to \overline{K}^{\times}$.
The classical complex Hecke $L$-function of $\varphi$ is defined by
\begin{equation}
L_{\mathfrak{g}'}(s,\varphi):=\sum_{(\mathfrak{a}, \mathfrak{g}')=1}\frac{\varphi(\mathfrak{a})}{N(\mathfrak{a})^s}\qquad \text{for}\ \operatorname{Re}(s)\gg 0
\label{The classical complex Hecke L-function}
\end{equation}
where the sum runs over all integral ideals $\mathfrak{a}$ of $\mathcal{O}_{K}$ prime to $\mathfrak{g}'$.
By defining $\varphi(\mathfrak{a})$ by 0 if $\mathfrak{a}$ is not prime to $\mathfrak{g}'$, we can consider $\varphi$ as a function
on the group of all fractional ideals of  $\mathcal{O}_{K}$. If $\operatorname{Re}(s)>(m+n)/2+1$, the Hecke $L$-function (\ref{The classical complex Hecke L-function})
converges absolutely.
The analytic continuation and functional equation of $L_{\mathfrak{g}'}(s,\varphi)$ is well known.
If we put
\begin{equation*}
\widehat{L}_{\mathfrak{g}'}(s,\varphi):=\frac{(d_{K}N(\mathfrak{g}'))^{s/2}\Gamma(s-\min\{m,n\})L_{\mathfrak{g}'}(s,\varphi)}{(2\pi)^{s-\min\{m,n\}}}
\end{equation*}
for $\pi=3.1415\cdots$ and discriminant $d_{K}$ of $K$,
we have
\begin{equation}
\widehat{L}_{\mathfrak{g}'}(s,\varphi)=W\cdot \widehat{L}_{\mathfrak{g}'}(1+m+n-s,\overline{\varphi}),
\label{functional equation of L-function}
\end{equation}
where $W$ is a constant of absolute value 1, called the Artin root number.
Since $\Gamma$-functions of both sides of (\ref{functional equation of L-function}) have no poles on
$\{(m,n)\in \Z^{2}\ |\ m<0, n\geq 0 \}$ or $\{(m,n)\in \Z^{2}\ |\ n<0, m\geq 0 \}$, following Deligne (\cite{De} D\'{e}finition 1.3),
these two sets are \textbf{critical} domains.
In addition, for integers $m,n$ with $m< 0$ and $n\geq 0$ (resp. $n< 0$ and $m\geq 0$), by Damerell's theorem (\cite{BK} Corollary 2.12), we have
\begin{equation}
\frac{L_{\mathfrak{g}'}(0,\varphi)}{\Omega^{n-m}}\in \overline{\Q} \qquad 
\left(\text{resp.}\ \frac{L_{\mathfrak{g}'}(0,\overline{\varphi})}{\Omega^{m-n}}\in \overline{\Q} \right).
\label{algebraicity of L-function at critical}
\end{equation}
(\ref{algebraicity of L-function at critical}) asserts that Deligne's conjecture (\cite{De} Conjecture 1.8) holds with respect to $L$-functions associated to algebraic Hecke characters.
Main theorem of this article is the theorem that the $p$-adic analogue of $L_{\mathfrak{g}'}(0,\varphi)/\Omega^{n-m}$
can be expressed by using the $p$-adic Eisenstein-Kronecker series 
in the non-critical domain.
In order to achieve our purpose, we use the $p$-adic Eisenstein-Kronecker series as Coleman functions constructed by \cite{BFK},
which we call the \textbf{Coleman Eisenstein-Kronecker series}.\\
\quad By the conditions $(g,p)=1$ and $(\pi,\overline{\pi})=1$, by the Chinese remainder theorem, we have
$$
(\mathcal{O}_{K}/\mathfrak{g}')^{\times}\cong (\mathcal{O}_{K}/\mathfrak{g})^{\times}\times
(\mathcal{O}_{K}/(\pi^{N}))^{\times}\times(\mathcal{O}_{K}/(\overline{\pi}^N))^{\times}
$$
Therefore if we restrict the finite character $\chi$ on $(\mathcal{O}_{K}/\mathfrak{g}')^{\times}$ respectively
to $\chi_{\mathfrak{g}}: (\mathcal{O}_{K}/\mathfrak{g})^{\times} \to \overline{K}^{\times}$,
$\chi_{1}: (\mathcal{O}_{K}/(\pi^{N}))^{\times}\to \overline{K}^{\times}$,
and $\chi_{2}: (\mathcal{O}_{K}/(\overline{\pi}^{N}))^{\times}\to \overline{K}^{\times}$,
we can decompose $\chi$ by
$$
\chi(\alpha)=\chi_{\mathfrak{g}}(\alpha)\chi_{1}(\alpha)\chi_{2}(\alpha).
$$
We extend $\chi_{\mathfrak{g}}, \chi_{1}, \chi_{2}$ respectively into characters with $\C_{p}$-values by using
an inclusion map $i: \overline{K}^{\times}\hookrightarrow \C_{p}^{\times}$.\\
\quad Now we put $\X:=\varprojlim_{n}(\mathcal{O}_{K}/\mathfrak{g}p^n \mathcal{O}_{K})^{\times}$.
We define the \textbf{$p$-adic character} $\phi_{p}: \X\to \C_{p}^{\times}$ by (\cite{dS} Chapter II \S1 (5)).
Similarly to (\cite{dS} Chapter II \S4.16 (49)) or (\cite{BKT} \S2.4), for a measure $\mu_{g}$ on $\X$ (for details, see (\ref{definition of the measure on X})),
we define the value of the \textbf{$p$-adic $L$-function} at the $p$-adic character $\phi_{p}: \X\to \C_{p}^{\times}$ by
\begin{equation*}
L_{p}(\phi_{p}):=\int_{\X}\phi_{p}(\alpha)d\mu_{g}(\alpha).
\end{equation*}
Now for the $p$-adic character $\varphi_{p}: \X\to \C_{p}^{\times}$ defined by 
$\varphi_{p}(\alpha):=\varphi((\alpha))$
for any $\alpha\in \mathcal{O}_{K}$ prime to $\mathfrak{g}p$,
since 
$$
\X\cong (\mathcal{O}_{K}/\mathfrak{g})^{\times}\times (\mathcal{O}_{K}\otimes_{\Z}\Z_{p})^{\times},
$$
we have $\varphi_{p}=\chi_{\mathfrak{g}}\chi_{1}\chi_{2}\kappa_{1}^{m}\kappa_{2}^{n}$ as $p$-adic characters on $\X$,
where $\kappa_{1} , \kappa_{2}$ are the projections to the first and second factors of the following isomorphism:
\begin{equation}
(\mathcal{O}_{K}\otimes_{\Z}\Z_{p})^{\times}\cong \mathcal{O}_{K_{\mathfrak{p}}}^{\times}\times \mathcal{O}_{K_{\overline{\mathfrak{p}}}}^{\times}
\overset{\cong}{\longrightarrow}\Z_{p}^{\times} \times \Z_{p}^{\times}
 \quad \alpha\overset{\sim}{\longmapsto}(\kappa_{1}(\alpha),\kappa_{2}(\alpha)).
\label{the projections to the first and second factors}
\end{equation}
Note that $\chi_1$, $\chi_2$ which are components of $\varphi_{p}$ can be regarded as characters on $\X$ by the following liftings of the natural projections:
\begin{align*}
\X\joinrel\relbar\joinrel\relbar\joinrel\twoheadrightarrow \varprojlim_{n}(\mathcal{O}_{K}/p^n \mathcal{O}_{K})^{\times}\cong
(\mathcal{O}_{K}\otimes_{\Z}\Z_{p})^{\times}
\overset{\kappa_{1}}{\joinrel\relbar\joinrel\relbar\joinrel\twoheadrightarrow}
\mathcal{O}_{K_{\mathfrak{p}}}^{\times}
\joinrel\relbar\joinrel\relbar\joinrel\twoheadrightarrow (\mathcal{O}_{K}/(\pi^N))^{\times}
\overset{\chi_{1}}{\joinrel\relbar\joinrel\relbar\joinrel\longrightarrow}\overline{K}^{\times}
\overset{i}{\hookrightarrow} \C_{p}^{\times}\\
\X\joinrel\relbar\joinrel\relbar\joinrel\twoheadrightarrow \varprojlim_{n}(\mathcal{O}_{K}/p^n \mathcal{O}_{K})^{\times}\cong
(\mathcal{O}_{K}\otimes_{\Z}\Z_{p})^{\times}
\overset{\kappa_{2}}{\joinrel\relbar\joinrel\relbar\joinrel\twoheadrightarrow}
\mathcal{O}_{K_{\overline{\mathfrak{p}}}}^{\times}
\joinrel\relbar\joinrel\relbar\joinrel\twoheadrightarrow (\mathcal{O}_{K}/(\overline{\pi}^N))^{\times}
\overset{\chi_{2}}{\joinrel\relbar\joinrel\relbar\joinrel\longrightarrow}\overline{K}^{\times}
\overset{i}{\hookrightarrow} \C_{p}^{\times}.
\end{align*}
Let $\Omega_{\mathfrak{p}}\in \mathcal{O}_{\C_{p}}^{\times}$
 be the \textbf{$p$-adic period} obtained from the isomorphism between the formal group of the elliptic curve and the multiplicative formal group
(for details, see (\ref{isomorphism between formal groups})).
By calculation, we know that for integers $m,n$ with $m<0$ and $n\geq 0$.
$$
\frac{L_{p}(\varphi_{p})}{\Omega_{\mathfrak{p}}^{n-m}}=(\text{interpolation factor})\times \frac{L_{\mathfrak{g}'}(0,\varphi)}{\Omega^{n-m}}
$$
holds. Hence $L_{p}(\varphi_{p})/\Omega_{\mathfrak{p}}^{n-m}$ is the $p$-adic analogue of $L_{\mathfrak{g}'}(0,\varphi)/\Omega^{n-m}$.\\
\quad K. Bannai, S. Kobayashi, and T. Tsuji related the non-critical values of the
$p$-adic $L$-function and $p$-adic Eisenstein-Kronecker series constructed by the measure
when the conductor of algebraic Hecke character is \textit{not divisible} by $p$.
However when the conductor of the algebraic Hecke character is \textit{divisible} by $p$, 
we \textit{cannot express} special values of the $p$-adic $L$-function by using the
$p$-adic Eisenstein-Kronecker series constructed by the measure.
So we use the $p$-adic Eisenstein-Kronecker series constructed by the \textit{Coleman integration} established in \cite{BFK}.
For any integers $m,n$ with $n\geq 0$, let $E_{m,n}^{\operatorname{col}}(z)$ be the Coleman Eisenstein-Kronecker series on $E(\C_{p})\setminus [0]$.
We expressed non-critical values of the $p$-adic $L$-function by using the Coleman Eisenstein-Kronecker series as follows:
\begin{Thm}[=Theorem \ref{My Main Theorem}]
Let $m,n$ be any integers with $n\geq 0$.
For a primitive $\mathfrak{g}'$-torsion point $\xi_{\mathfrak{g}'}:=i_{*}(\Omega C/gp^N)$
that $C$ is a special constant of $z$ contained in the following equation, we have
\begin{align*}
\frac{L_{p}(\varphi_{p})}{\Omega_{\mathfrak{p}}^{n-m}}=
\frac{g^{-1} n! (-1)^{m+n+1}}{\tau(\overline{\chi_{1}})\overline{\pi}^N}
\sum_{z\in (\mathcal{O}_{K}/\mathfrak{g}')^{\times}}\chi(z)
E_{m+1, n+1}^{\operatorname{col}}(\xi_{\mathfrak{g}'}z),
\end{align*}
where $\tau(\overline{\chi_{1}})$ is the Gauss sum defined by Lemma \ref{Properties of Gauss sum}
and $i_{*}: E(\overline{K})\hookrightarrow E(\C_{p})$ is the inclusion map induced by the inclusion map $i: \overline{K}\hookrightarrow \C_{p}$.
Note that $\Omega C/gp^N$ is the element in $E(\overline{K})$ through the isomorphism 
$\mathfrak{g}'^{-1}\Gamma/\Gamma\cong E(\C)[\mathfrak{g}']\cong E(\overline{K})[\mathfrak{g}']$.
Here $E(\overline{K})[\mathfrak{g}']$ (resp. $E(\C)[\mathfrak{g}']$) is a subgroup of $E(\overline{K})$ (resp. $E(\C)$), which
consists of $\mathfrak{g}'$-torsion points.
\end{Thm}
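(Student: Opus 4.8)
The plan is to begin from the defining integral $L_p(\varphi_p) = \int_{\X}\varphi_p(\alpha)\,d\mu_g(\alpha)$ and to carry out the computation component by component, using the decomposition $\varphi_p = \chi_{\mathfrak{g}}\chi_1\chi_2\kappa_1^m\kappa_2^n$ afforded by the Chinese remainder isomorphism $\X \cong (\mathcal{O}_K/\mathfrak{g})^\times \times (\mathcal{O}_K\otimes_\Z\Z_p)^\times$. The three finite components $\chi_{\mathfrak{g}}, \chi_1, \chi_2$ are locally constant, so they can be pulled out of the integral against $\mu_g$ level by level, reducing the problem to the moments of $\kappa_1^m\kappa_2^n$ against $\mu_g$ restricted to residue classes; by the construction of $\mu_g$, these moments carry the $p$-adic Eisenstein--Kronecker data attached to the measure.

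First I would isolate the component $\chi_1$ at $\pi$, which is the ramified part responsible for the conductor being divisible by $p$; this is the crucial step where the argument departs from the prime-to-$p$ case of Bannai--Kobayashi--Tsuji. Using the Gauss sum $\tau(\overline{\chi_1})$ of Lemma \ref{Properties of Gauss sum}, I would rewrite $\chi_1$ as a sum of additive characters modulo $\pi^N$, invoking the inversion formula $\chi_1(\alpha) = \tau(\overline{\chi_1})^{-1}\sum_u \overline{\chi_1}(u)\,\mathbf{e}(u\alpha/\pi^N)$. Under the identification of the formal group of $E$ with the multiplicative formal group through the $p$-adic period $\Omega_{\mathfrak{p}}$, each additive character $\mathbf{e}(u\alpha/\pi^N)$ corresponds to a translation by a $\pi^N$-torsion point, so this step converts the multiplicative twist $\chi_1$ into a sum over shifts of the argument by torsion points of $p$-power order. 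The denominator factor $\overline{\pi}^N$ arises from normalizing these torsion translations against the chosen period.

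Next I would assemble the resulting moment sums into values of the Coleman Eisenstein--Kronecker series. Whereas in the unramified situation the moments are literally the values of the $p$-adic Eisenstein--Kronecker series produced by the measure, here the presence of $p$-power torsion in the argument forces one to interpret these values through the Coleman integration of \cite{BFK}: the functions $E_{m+1,n+1}^{\operatorname{col}}$ are the Coleman primitives whose residues and reductions match the measure-theoretic moments, and evaluating them at $\xi_{\mathfrak{g}'}z = i_*(\Omega C/gp^N)z$ reproduces exactly the shifted sums obtained above. Collecting the contributions of $\chi_{\mathfrak{g}}$ and $\chi_2$ over the full set $(\mathcal{O}_K/\mathfrak{g}')^\times$ then yields the single character sum $\sum_z \chi(z)\,E_{m+1,n+1}^{\operatorname{col}}(\xi_{\mathfrak{g}'}z)$. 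Finally I would track the elementary constants: the level normalization $g^{-1}$, the combinatorial factor $n!$ coming from the normalization of the derivative defining $E_{m+1,n+1}^{\operatorname{col}}$, and the sign $(-1)^{m+n+1}$ produced by the reflection inherent in passing between the measure and the Eisenstein--Kronecker series, thereby arriving at the stated identity.

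The hard part will be the passage from the measure-theoretic moments to the Coleman-function values at $p$-power torsion points. In the prime-to-$p$ case the torsion point lies in the locus where the measure directly evaluates the Eisenstein--Kronecker series, but once $p \mid \mathfrak{g}'$ the point $\xi_{\mathfrak{g}'}$ has order divisible by $p^N$ and lies outside that locus, so the naive moment no longer makes sense. Making rigorous the claim that the Coleman primitive of \cite{BFK} is the correct substitute, so that its value at $\xi_{\mathfrak{g}'}z$ coincides with the analytically continued moment, together with the exact bookkeeping of the Gauss-sum normalization $\tau(\overline{\chi_1})^{-1}\overline{\pi}^{-N}$, is where the essential work lies.
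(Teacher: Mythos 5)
Your outline follows the same overall route as the paper: start from $L_{p}(\varphi_{p})=\int_{\X}\varphi_{p}\,d\mu_{g}$, decompose $\varphi_{p}$ by the Chinese remainder theorem, invert $\chi_{1}$ and $\chi_{2}$ through their Gauss sums into additive characters realized as torsion translations, and identify the resulting moments with values of $E_{m+1,n+1}^{\operatorname{col}}$. However, the two steps you defer as ``the essential work'' are precisely the ones that make the ramified case go through, and neither is a routine consequence of what you wrote. The first gap: the passage from moments to Coleman values rests on an \emph{asymmetric} treatment of $\pi$ and $\overline{\pi}$, not on a uniform analytic continuation. The bridge between the measure $\mu_{z_{0},b}$ and the Coleman function, formula (\ref{relation p-adic construction and Coleman function}), is only available when the indexing point $z_{0}$ has order prime to $\mathfrak{p}$, because residue disks of $E(\C_{p})$ are indexed by prime-to-$\mathfrak{p}$ Teichm\"uller points. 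The paper's Lemmas \ref{some portions of the $p$-adic $L$-function} and \ref{calculation of portion} therefore use the distribution relation (\ref{distribution relation of Kronecker theta function 1}) of the Kronecker theta function to convert the $\chi_{2}$-twist (a shift of the second variable by $\overline{\pi}^{N}$-torsion) into a shift of the \emph{first} variable by the prime-to-$\pi$ point $(\alpha_{0}\Omega/g-\gamma\Omega/g)/\overline{\pi}^{N}$; only the $\chi_{1}$-twist survives as genuine $\pi^{N}$-torsion, and it is absorbed as the local coordinate $\lambda([g^{-1}\overline{\pi}^{-N}u]s_{N})$ \emph{inside} a residue disk (Lemma \ref{expressed as a Coleman function}). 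Your sketch treats the two ramified components symmetrically and offers no substitute for this maneuver, so the claimed identification with $E_{m+1,n+1}^{\operatorname{col}}(\xi_{\mathfrak{g}'}z)$ does not yet follow.

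The second gap: the measure computation naturally yields the twisted function $E_{m+1,n+1}^{(p)}(z)=E_{m+1,n+1}^{\operatorname{col}}(z)-\pi^{-(m+1)}\overline{\pi}^{-(n+1)}E_{m+1,n+1}^{\operatorname{col}}(\pi z)$ of (\ref{twisted p-adic Eisenstein-Kronecker series as Coleman function}), not $E_{m+1,n+1}^{\operatorname{col}}$ itself. To reach the stated formula one must show that the $E^{\operatorname{col}}(\pi\,\cdot)$ term cancels after summing against $\chi_{1}$; this is Lemma \ref{cancel}, and it uses exactly the primitivity of $\chi_{1}$ modulo $\pi^{N}$ (after multiplication by $\pi$ the argument is insensitive to $u\mapsto a^{-1}u$ for $a\equiv 1 \pmod{\pi^{N-1}}$, while $\chi_{1}(a)\neq 1$). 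Your proposal never mentions this cancellation, and without it the right-hand side of the final identity is off by an Euler-factor-like correction. Supplying these two mechanisms --- the distribution-relation transfer of the $\overline{\pi}$-part into the residue-disk index, and the primitivity cancellation --- is what remains to turn your outline into a proof.
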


\section{Coleman integration theory and its applications}
\subsection{A brief review of Coleman integration theory}\leavevmode\\
\quad In this chapter, we give a brief review of Coleman integration theory.
Coleman constructed $p$-adic integration theory by using rigid analysis which Tate introduced in \cite{Ta}
and defined $p$-adic polylogarithms in his studies of $p$-adic analogue of Bloch's results
which related special values of $L$-functions of algebraic varieties to Quillen's $K$-groups with dilogarithms (see \cite{Col1}, \cite{Blo}).
For Tate's rigid analysis, see (\cite{BGR}, \cite{FdP}, or \cite{Ber} \S0).
Coleman functions are, roughly speaking, generalization of rigid analytic functions and given as power series which converge on each unit open disk.
For details, see \cite{Bes0}.
In addition, Besser gave generalization
of Coleman integrations on a smooth and proper algebraic variety which has a good reduction in \cite{Bes1}.\\
\quad In Coleman integration theory, there are two important properties.
\begin{itemize}
\item[(A)] The \textbf{uniqueness principle} holds. Thus we can consider analytic continuation. (for example, see (\cite{CdS} Corollary 2.4.5))
\item[(B)] We can locally integrate any differential forms and it is \textbf{unique} up to a constant by Frobenius invariance.
\end{itemize}
\quad In Tate's rigid analysis, (A) holds (for example, see (\cite{Ber}) Proposition 0.1.13). But (B) does not hold
because for example if $t=0$ is removed for a local parameter $t$, we cannot integrate $dt/t$ in the affinoid algebra.
So we will extend into a bigger ring so that we can integrate any differential forms.\\
\quad Let $\mathcal{O}_{\mathbb{C}_{p}}$ be an integer ring of the completion $\mathbb{C}_{p}$ of algebraic closure of $\mathbb{Q}_{p}$.
Then a residue field of $\mathbb{C}_{p}$ is $\overline{\mathbb{F}}_{p}$.
Let $X$ be a proper, smooth, and connected scheme of locally finite type of relative dimension 1 defined over $\mathcal{O}_{\mathbb{C}_{p}}$
with a good reduction. Let $X(\mathbb{C}_{p})$ be a generic fiber and $X(\overline{\mathbb{F}}_{p})$ be a special fiber.
According to (\cite{Ber} Proposition 0.3.5), $X(\mathbb{C}_{p})$ is isomorphic to a rigid analytic $\mathbb{C}_{p}$-space $X^{\operatorname{an}}$
obtained by a rigid analyzation of $X$, so we may also denote $X(\mathbb{C}_{p})$ by $X(\mathbb{C}_{p})^{\operatorname{an}}$.\\
\quad Let $Y\subset X$ be an open affine subscheme defined over $\mathcal{O}_{\mathbb{C}_{p}}$ which is proper, smooth, connected and has a good reduction.
Let $Y(\overline{\mathbb{F}}_{p})$ be a special fiber of $Y$. 
Then we can take finite points $e_1,\cdots, e_n$ such that 
$X(\overline{\mathbb{F}}_{p})\setminus Y(\overline{\mathbb{F}}_{p})=\{ e_1,\cdots, e_n \}$.\\
\quad For a $\overline{\mathbb{F}}_{p}$-subscheme $S\subset X(\overline{\mathbb{F}}_{p})$, 
let $]S[:=\operatorname{sp}^{-1}(S)\subset X(\mathbb{C}_{p})^{\operatorname{an}}$
be a tube of $S$, where $\operatorname{sp}$ is the specialization map 
$$
\operatorname{sp}:\ X(\mathbb{C}_{p})^{\operatorname{an}}\overset{\text{reduction}}{\relbar\joinrel\relbar\joinrel\relbar\joinrel\relbar\joinrel\longrightarrow} X(\overline{\mathbb{F}}_{p}).
$$
In particular, for a closed point $x\in X(\overline{\mathbb{F}}_{p})$, $]x[:=\operatorname{sp}^{-1}(x)$
is a unit open disk by (\cite{Ber} Proposition 1.1.1). In other words, $]x[\ \cong \{z\in \mathbb{C}_{p}\ |\ |z|<1 \}$.
We denote a local parameter $z$ around $x$ by $z_x$. 
For $0<r\leq 1$, we put $U_{r}:=X(\C_{p})^{\operatorname{an}}\setminus \cup_{i=1}^{n} D(\widetilde{e}_{i},r)^{-}$.
Here, $\widetilde{e}_{i}\in X(\C_{p})$ is a lift of $e_{i}$ and $D(\widetilde{e}_{i},r)^{-}$ is a closed disk centered at $\widetilde{e}_{i}$ with radius $r$.\\
\quad Let $\mathcal{O}_{X^{\operatorname{an}}}$ be a structure sheaf of the rigid analytic $\mathbb{C}_{p}$-space
 $X^{\operatorname{an}}=X(\mathbb{C}_{p})^{\operatorname{an}}$. Let $U:=\operatorname{sp}^{-1}(X(\overline{\mathbb{F}}_{p}))=\varprojlim_{r\to 1}U_r$.
 Let $A(U)$ be a subset of a locally rigid analytic function $\mathscr{L}(U)$ on $U$ such that
 $$
 A(U):=\{f\in \mathscr{L}(U)\ |\  f|_{X^{\operatorname{an}}}\in  \mathcal{O}_{X^{\operatorname{an}}}(X^{\operatorname{an}})\}
 $$
where 
$\mathcal{O}_{X^{\operatorname{an}}}(X^{\operatorname{an}})=\Gamma(X^{\operatorname{an}}, \mathcal{O}_{X^{\operatorname{an}}})$
 is an algebra of rigid analytic functions.
Let $\Omega^{1}(U)$ be a space of 1-forms. 
These are respectively rings of overconvergent functions and overconvergent 1-forms in the sense of Monsky-Washnitzer. 
In other words, we can regard $A(U)$ as $\Gamma(]Y(\overline{\mathbb{F}}_{p})[, j^{\dagger}\mathcal{O}_{]Y(\overline{\mathbb{F}}_{p})[})$ and 
$\Omega^{1}(U)$ as $\Gamma(]Y(\overline{\mathbb{F}}_{p})[, j^{\dagger}\Omega^{1}_{]Y(\overline{\mathbb{F}}_{p})[})$,
where for an open immersion $j: S \hookrightarrow \overline{S}$ corresponding to a closed subscheme $\overline{S}\subset X(\overline{\mathbb{F}}_{p})$
and an open $S$ of $\overline{S}$, $j^{\dagger}$ is a functor defined by (\cite{Ber} \S2.1 (2.1.1.1)).\\
\quad A \textbf{branch} of $p$-adic logarithms is any locally analytic homomorphism
 $\log: \mathbb{C}_{p}^{\times}\to \mathbb{C}_{p}^{+}$ with the usual expansion for $\log$ around 1.
Such a function is determined by choosing $\pi\in \mathbb{C}_{p}$ such that $|\pi| < 1$ and declaring $\log(\pi) = 0$.
Coleman's $p$-adic integration theory depends on the choice of the branch of the $p$-adic logarithms.
We choose such a branch ``$\log$" of $p$-adic logarithms. We define
\begin{align*}
A_{\log}(]x[)&:=
\begin{cases}
A(]x[) & \text{if}\ x\in Y(\overline{\mathbb{F}}_{p})\\
\lim_{r\to 1}A(]x[\cap U_r)[\log(z_x)]
& \text{if}\ x\in X(\overline{\mathbb{F}}_{p})\setminus Y(\overline{\mathbb{F}}_{p}) 
\end{cases}&\\
\Omega_{\log}^{1}(]x[)&:=A_{\log}(]x[)dz_x&
\end{align*}
Here, note that 
if $x\in Y(\overline{\mathbb{F}}_{p})$, then $A(]x[)$ is the ring $\mathcal{O}_{]x[}(]x[)$ consisting of formal power series 
$f(z_x)=\sum_{n=0}^{\infty}a_{n}z_{x}^{n}$ which converges on $\{z_x\in \C_{p}\ |\ |z_x|<1  \}$,
and if $x\in X(\overline{\mathbb{F}}_{p})\setminus Y(\overline{\mathbb{F}}_{p})$, then 
formal power series $f(z_x)=\sum_{n=-\infty}^{\infty}a_{n}z_{x}^{n}$ which converges on $\{z_x\in \C_{p}\ |\ r<|z_x|<1 \}$ for some $r<1$.
We define rings of locally analytic functions and 1-forms on $U$ by 
$$
A_{\operatorname{loc}}(U):=\prod_{x\in X(\overline{\mathbb{F}}_{p})}A_{\log}(]x[),\qquad
\Omega^{1}_{\operatorname{loc}}(U):=\prod_{x\in X(\overline{\mathbb{F}}_{p})}\Omega^{1}_{\log}(]x[).
$$
These are independent of the choice of $z_x$. We can define a differential $d: A_{\operatorname{loc}}(U) \to \Omega^{1}_{\operatorname{loc}}(U)$
in the natural way. Then $d: A_{\operatorname{loc}}(U) \to \Omega^{1}_{\operatorname{loc}}(U)$ is surjective. 
The point is that we are able to integrate $dz/z$ by adding logarithms.
So we can integrate any elements in 
$\Omega^{1}_{\operatorname{loc}}(U)$ i.e. (B) holds. But 
since $\operatorname{Ker}(d)=\prod_{x\in X(\overline{\mathbb{F}}_{p})} \mathbb{C}_{p}$, we do not have the notion of analytic continuation yet i.e. (A)
does not hold. 
\begin{Def}[Coleman function]
Coleman defined a subalgebra $M(U)\subset A_{\operatorname{loc}}(U)$ equipped with an integration map
$$
\int:\ M(U)\otimes_{A(U)}\Omega^{1}(U)\to M(U)/\mathbb{C}_{p} \qquad \omega\mapsto F_{\omega}:=\int \omega
$$
which is one of $\mathbb{C}_{p}$-linear maps, in order to obtain the notion of analytic continuation, with  the surjectivity of $d$ keeping as follows.
A map $\int$ is characterized by three properties:
\begin{itemize}
\item[i)] (The existence of a primitive function)\quad $dF_{\omega}=\omega$
\item[ii)] (Frobenius invariance)\quad For a Frobenius automorphism  $\phi:\ U\to U$, we have
$$
\int (\phi^{*} (\omega) )=\phi^{*}\left(\int \omega \right)
$$
\item[iii)] $\int dg=g+\mathbb{C}_{p}$\quad for $g\in A(U)$.
\end{itemize}
We call $M(U)$ a space of \textbf{Coleman functions} on $U$. As for the construction of  such a space $M(U)$, see (\cite{Bes0} \S2).
\end{Def}
In summary, when $f$ is a function in $A_{\operatorname{loc}}(U)$ and $P(x)$ is a polynomial with $\C_{p}$-coefficients whose roots do not contain the roots of 1,
if $df \in M(U)\otimes_{A(U)}\Omega^{1}_{\operatorname{loc}}(U)$ and $P(\phi^{*})f \in M(U)$, then we have $f\in M(U)$.
Note that we extend the classes of integrable differential forms so that the integration is unique up to a constant in $\mathbb{C}_{p}$, not in 
$\prod_{x\in X(\overline{\mathbb{F}}_{p})}\mathbb{C}_{p}$. In other words, we have an exact sequence
$$
0\longrightarrow \C_{p}\longrightarrow M(U)\overset{d}{\longrightarrow} M(U)\otimes_{A(U)}\Omega^{1}(U)\longrightarrow 0.
$$
\quad The entire theory turns out to be independent of the choice of $\phi$.
The important idea is to extend the classes of the integrable differential forms from $d(A(U))$ by using Frobenius invariance.

\subsection{Applications of Coleman integrations}\leavevmode\\
Put $U:=\mathbb{P}^{1}(\mathbb{C}_{p})\setminus \{ 0,1,\infty \}$. Coleman defined $p$-adic polylogarithms recursively as follows:
\begin{Def}[$p$-adic polylogarithm]
Let $k$ be an integer. 
If $k\geq 0$,
we define a locally analytic function 
$$
\ell_{k}\in M(U)\quad (k\geq 0 )
$$
satisfying
\begin{itemize}
\item [i)] $\displaystyle{\ell_{0}(z)=\frac{z}{1-z}}$
\item [ii)] $\displaystyle{d\ell_{k}(z)=\ell_{k-1}(z)\frac{dz}{z}}$
\item [iii)] $\displaystyle{\lim_{z\to 0}\ell_{k}(z)=0}$.
\end{itemize}
$\ell_{k}(z)$ is an analytic function $\ell_{k}(z)=\sum_{n=1}^{\infty}\frac{z^n}{n^k}$ on $|z|<1$.
The existence and uniqueness of $\ell_{k}$ is insured by (\cite{Bre} Corollaire 2.2.2.1).
If $k\leq 0$, we take $\ell_{k}\in A(U)$ satisfying i), ii), iii). 
This $\ell_{k}$ is the \textbf{$p$-adic polylogarithm} defined by Coleman \cite{Col1}.
\label{existence and uniqueness of p-adic polylog}
\end{Def}
The important application of the $p$-adic polylogarithm is that Kubota-Leopoldt $p$-adic $L$-function $L_{p}(s,\chi)$ associated to
 a non-trivial Dirichlet character $\chi$ can be written as the sum of the $p$-adic polylogarithms at positive integers (i.e. non-critical values).
 It is well known that Kubota-Leopoldt $p$-adic $L$-function $L_{p}(s,\chi)$ is obtained by interpolating at negative integers (i.e. critical points) of 
 the complex Dirichlet $L$-function (see \cite{Iwa} \S3 Theorem 3 ii)).
\begin{Thm}[Coleman \cite{Col1} \S7]
Let $p$ be an odd prime. Let $\chi:\ (\mathbb{Z}/d\mathbb{Z})^{\times}\to \mathbb{C}_{p}^{\times} $ be a Dirichlet character
with a conducter $d>1$ such that $d$ is prime to $p$, and $\omega: \mathbb{Z}_{p}^{\times}\to (\mathbb{Z}/p\mathbb{Z})^{\times}$ be a Teichm\"{u}ller character.
For all integers $k\geq 1$, we have
$$
L_{p}(k,\chi\omega^{1-k})=\left(1-\frac{\chi(p)}{p^k}\right)\frac{g(\chi,\zeta)}{d}\sum_{a=1}^{d-1}\overline{\chi(a)}\ell_{k}(\zeta^{-a}),
$$  
where $\zeta$ is a primitive $d$-th root of 1 and $g(\chi,\zeta)$ is the Gauss sum defined by $g(\chi,\zeta)=\sum_{a=0}^{d-1}\chi(a)\zeta^{a}$.
\label{p-adic L-function and p-adic polylog}
\end{Thm}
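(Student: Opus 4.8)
The plan is to reproduce on the $p$-adic side the classical computation expressing a Dirichlet $L$-value as a finite sum of polylogarithms, and then to identify the resulting Coleman-function value with the Kubota--Leopoldt $L$-function. First I would record the complex blueprint: writing $\operatorname{Li}_k(z)=\sum_{n\geq 1}z^n/n^k$ and inverting $\chi$ over $(\Z/d\Z)^{\times}$ by finite Fourier (Gauss-sum) duality gives the exact identity $L(k,\chi)=\frac{g(\chi,\zeta)}{d}\sum_{a=1}^{d-1}\overline{\chi(a)}\operatorname{Li}_k(\zeta^{-a})$, valid because $(d,p)=1$ forces raising to a $p$-power to permute the $d$-th roots of unity. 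The asserted formula has precisely this shape with $\operatorname{Li}_k$ replaced by the Coleman polylogarithm $\ell_k$ of Definition~\ref{existence and uniqueness of p-adic polylog} and an extra Euler factor $1-\chi(p)/p^k$. The whole content is thus (i) that $\ell_k$ supplies the correct $p$-adic continuation of the series $\sum z^n/n^k$ to the roots of unity $\zeta^{-a}$, which sit on the boundary $|z|=1$ outside the disc of convergence, and (ii) that inserting the $p$-Euler factor turns this quantity into $L_p(k,\chi\omega^{1-k})$.

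The key input absent from the complex case is a distribution relation for $\ell_k$, which I would extract from the two structural features of Coleman integration. Starting from $\ell_0(z)=z/(1-z)$ and the factorization $1-z^p=\prod_{\eta^p=1}(1-\eta z)$, a direct computation gives the relation at $k=0$; using $d\ell_k=\ell_{k-1}\,dz/z$ it then propagates by induction, since at each stage the two Coleman functions in question have equal differential and equal value at $0$, so by the uniqueness principle (A) (equivalently, the characterization of $\int$ by properties (i)--(iii)) they coincide. This yields
\[
\ell_k(z^p)=p^{k-1}\sum_{\eta^p=1}\ell_k(\eta z),
\]
where the passage to $z^p$ is exactly the Frobenius action $z\mapsto z^p$ on the reduction, so that Frobenius invariance (property (ii)) is what is being exploited. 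This relation links polylogarithm values at $d$-th roots of unity to those at $dp$-th roots and is the sole source of the Euler factor at $p$.

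Next I would realize both sides as moments of one $p$-adic measure. For each $\zeta^{-a}$ the negative-index values $\ell_{-j}(\zeta^{-a})\in A(U)$ are computable moments of an explicit measure $\mu_a$ on $\Z_p$ attached to the rational function $z/(1-z)$, and their $\chi$-twisted combinations are the generalized Bernoulli numbers through which Kubota and Leopoldt define $L_p$ at non-positive integers. Forming the twist $\mu_\chi:=\frac{g(\chi,\zeta)}{d}\sum_a\overline{\chi(a)}\mu_a$, its full moments over $\Z_p$ reproduce the combination $\frac{g(\chi,\zeta)}{d}\sum_a\overline{\chi(a)}\ell_k(\zeta^{-a})$, while the Mellin--Mazur transform of $\mu_\chi$ over the units $\Z_p^{\times}$ is by construction $L_p(k,\chi\omega^{1-k})$. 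The difference between integrating over $\Z_p$ and over $\Z_p^{\times}$ is the contribution of $p\Z_p$, which the distribution relation of the previous paragraph evaluates as $\chi(p)/p^k$ times the full integral; restricting to $\Z_p^{\times}$ therefore multiplies by $1-\chi(p)/p^k$. Verifying the identity at the non-positive integers, where everything is an explicit Bernoulli computation, and invoking the $p$-adic continuity in $k$ of both moment interpolations then gives the formula for all $k\geq 1$.

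The step I expect to be the main obstacle is (i): proving that the canonical Coleman value of $\ell_k$ at the boundary root of unity $\zeta^{-a}$ is the same quantity that the measure $\mu_a$ interpolates, i.e. that the Frobenius-invariant primitive singled out by property (B) is the one compatible with Kubota--Leopoldt interpolation. Since the defining series is unavailable on $|z|=1$, this cannot be checked termwise; it must be forced through the uniqueness principle (A) together with the Frobenius-eigenvalue bookkeeping of the distribution relation, which pin the continuation down uniquely. Tracking the Gauss-sum normalization and the exact power of $p$ in the Euler factor across this continuation is the delicate point on which the argument ultimately rests.
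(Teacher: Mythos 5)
The paper does not actually prove this statement: it quotes it from Coleman \cite{Col1} \S 7 (remarking only that the case $k=1$ reduces to Iwasawa's formula), so your proposal has to be measured against Coleman's own argument, and in outline it reconstructs that argument faithfully. The pieces you make explicit are correct: the distribution relation $\ell_k(z^p)=p^{k-1}\sum_{\eta^p=1}\ell_k(\eta z)$ has the right normalization (it checks directly at $k=0$ from $\ell_0(z)=z/(1-z)$ and $\prod_{\eta^p=1}(1-\eta z)=1-z^p$, and propagates by induction via $d\ell_k=\ell_{k-1}\,dz/z$ together with the uniqueness principle); the Gauss-sum inversion $L(k,\chi)=\frac{g(\chi,\zeta)}{d}\sum_{a}\overline{\chi(a)}\ell_k(\zeta^{-a})$ is the correct complex blueprint; the Euler factor $1-\chi(p)p^{-k}$ does arise exactly from the reindexing $a\mapsto pa$ on $(\Z/d\Z)^{\times}$, which is where $(d,p)=1$ genuinely enters; and anchoring the measure $\mu_\chi$ at non-positive integers through generalized Bernoulli numbers is the standard certification that its Mellin transform over $\Z_{p}^{\times}$ is the Kubota--Leopoldt function.

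There is, however, a genuine gap, which you candidly flag but do not close, and it is the entire content of the theorem rather than a technical loose end. Your final step, ``invoking the $p$-adic continuity in $k$ of both moment interpolations,'' is circular on the Coleman side: the measure side is continuous by construction, since $\int_{\Z_p^{\times}}x^{-k}d\mu_\chi=\lim_n\int_{\Z_p^{\times}}x^{k_n}d\mu_\chi$ for $k_n=-k+(p-1)p^n$, but $\ell_k$ for $k\geq 1$ is defined by iterated Coleman integration (Definition \ref{existence and uniqueness of p-adic polylog}), and nothing in that definition yields the needed convergence $\ell^{(p)}_{-k_n}(\zeta^{-a})\to\ell^{(p)}_{k}(\zeta^{-a})$ for $\ell^{(p)}_k(z):=\ell_k(z)-p^{-k}\ell_k(z^p)$, i.e.\ the Kummer-type congruences $\ell^{(p)}_{k}(\zeta^{-a})\equiv\ell^{(p)}_{k'}(\zeta^{-a})\ (\mathrm{mod}\ p^{m+1})$ for $k\equiv k'\ (\mathrm{mod}\ (p-1)p^{m})$. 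Equivalently, one must prove that on each residue disk around a root of unity of order prime to $p$ the Coleman function $\ell^{(p)}_k$ coincides with the Amice--Mellin transform of your local measure $\mu_a$; this is precisely what Coleman establishes in \S 7, by showing the measure-theoretic function on the disk is locally analytic, satisfies the same differential system $z\,h_k'=h_{k-1}$ with matching $h_0$, and is overconvergent, so that the characterization of Coleman primitives pins it down --- the mechanism you name (uniqueness plus Frobenius bookkeeping) is the right one, but naming it is not proving it. Without this lemma your argument establishes the displayed identity only for $k\leq 0$, where the theorem asserts nothing. It is worth observing that the missing lemma is exactly the cyclotomic counterpart of formula (\ref{relation p-adic construction and Coleman function}) in the elliptic setting, which the present paper likewise imports from (\cite{BFK} Remark 3.17) rather than proving.
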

Substituted for $k=1$, 
Theorem \ref{p-adic L-function and p-adic polylog} is reduced to (\cite{Iwa} \S 5 Theorem 3).
Note that Theorem \ref{p-adic L-function and p-adic polylog} is the $p$-adic analogue of the classical formula
$$
L(k,\chi)=\frac{g(\chi,\zeta)}{d}\sum_{a=1}^{d-1}\overline{\chi(a)}\ell_{k}(\zeta^{-a}).
$$
We can show this formula by using two properties of the Gauss sum
$$
\sum_{a=0}^{d-1}\overline{\chi(a)}\zeta^{-an}=\chi(n)\overline{g(\chi,\zeta)}
$$
and 
$$
g(\chi,\zeta)\overline{g(\chi,\zeta)}=d.
$$
Main theorem of this article is the elliptic analogue of Theorem \ref{p-adic L-function and p-adic polylog}.

\section{Review of the classical Eisenstein-Kronecker series and its $p$-adic analogue}
In this section, we review the definition of the classical Eisenstein-Kronecker series by A. Weil \cite{We} and of the 
$p$-adic Eisenstein-Kronecker series as the Coleman function by K. Bannai, H. Furusho, and S. Kobayashi in \cite{BFK}. 
The classical Eisenstein-Kronecker series is the elliptic analogue of 
the classical complex polylogarithm by using the Bloch-Wigner-Ramakrishnan polylogarithm which is invariant under 
the map $z\mapsto qz$ on an elliptic curve $\C^{\times}/q^{\Z}$ where $q=e^{2\pi i\tau}$ for $\operatorname{Im}(\tau)>0$ 
(see \cite{Zag} Theorem 1).
The $p$-adic Eisenstein-Kronecker series as the Coleman function i.e. Coleman Eisenstein-Kronecker series
is defined by constructing with generating functions appeared in Laurent coefficients of the Kronecker theta function. \\
\subsection{Review of the classical Eisenstein-Kronecker series}\leavevmode\\
\quad Recall that the definition of the classical Eisenstein-Kronecker series (\cite{We} VIII \S12).\\
Let $\Gamma\subset \mathbb{C}$ be a lattice, $\varpi=3.1415\cdots$ be the circular constant,
$A(\Gamma)=(\text{Area of}\ \mathbb{C}/\Gamma)/\varpi$,
$\chi_{w}(z)_{\Gamma}:=\exp((z\overline{w}-w\overline{z})/A(\Gamma))$ for any $z,w\in \mathbb{C}$.\\
\quad In particular, 
if we can write $\Gamma:=\mathbb{Z}\omega_{1}\oplus \mathbb{Z}\omega_{2}\subset \mathbb{C}$ with $ \operatorname{Im}(\omega_{2}/\omega_{1})>0$,
 note that
$$
A(\Gamma)=\frac{1}{\varpi}\operatorname{Im}(\omega_{2}/\omega_{1})=\frac{1}{2\varpi i}(\omega_{2}\overline{\omega}_{1}-\omega_{1}\overline{\omega}_{2}).
$$
In addition, by direct calculations, we know the following properties.
\begin{itemize}
\item [i)] $\chi_{w}(z)_{\Gamma}=\chi_{z}(-w)_{\Gamma}=\chi_{z}(w)_{\Gamma}^{-1}$
\item [ii)]$\chi_{w}(az)_{\Gamma}=\chi_{\overline{a}w}(z)_{\Gamma}$ for any $a\in\mathbb{C}$
\item [iii)] $z\in \Gamma \iff  \chi_{\gamma}(z)_{\Gamma}=1$ for any $\gamma\in \Gamma\ $.
\end{itemize}

\begin{Def}[Eisenstein-Kronecker-Lerch series]\leavevmode\\
Let $a$ be an integer and $z_{0},w_{0}\in \mathbb{C}$ be complex numbers. The \textbf{Eisenstein-Kronecker-Lerch series} is defined by 
$$
K_{a}^{*}(z_{0}, w_{0}, s; \Gamma):=\sum_{\gamma\in\Gamma\setminus \{ -z_{0}\}}\frac{(\overline{z}_{0}+\overline{\gamma})^{a}}{|z_{0}+\gamma|^{2s}}\chi_{w_{0}}(\gamma)_{\Gamma} \quad (s\in \mathbb{C}).
$$
This series converges absolutely for $\operatorname{Re}(s)>a/2+1$.
\label{Eisenstein-Kronecker-Lerch series}
\end{Def}
Hereafter, by abuse of notations, we omit ``$\Gamma$" except the case where we want to express the lattice clearly.
$K_{a}^{*}(z_{0}, w_{0}, s)$ has the following important properties.

\begin{Prop}
Let $a$ be an integer and $z_{0},w_{0}\in \mathbb{C}$ be complex numbers.
\begin{itemize}
\item[i)] $K_{a}^{*}(z_{0}, w_{0}, s)$ can be continued meromorphically on $\mathbb{C}$ as a function of $s$.
Moreover, if $a=0$ and $w_{0}\in \Gamma$, $K_{a}^{*}(z_{0}, w_{0}, s)$ has a simple pole at $s=1$.
\item[ii)] $K_{a}^{*}(z_{0}, w_{0}, s)$ has a functional equation:
$$
\Gamma(s)K_{a}^{*}(z_{0}, w_{0}, s)=A^{a+1-2s}\Gamma(a+1-s)K_{a}^{*}(w_{0}, z_{0}, a+1-s)\chi_{z_{0}}(w_{0}),
$$
where $\displaystyle{\Gamma(s)=\int_{0}^{\infty}e^{-t}t^{s-1}dt\quad (\operatorname{Re}(s)>0)}$ is a Gamma function.
\end{itemize}
\label{functional equation of Eisenstein-Kronecker-Lerch series}
\end{Prop}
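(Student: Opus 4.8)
The plan is to reduce both assertions to the analytic behaviour of an associated theta series through a Mellin transform. First I would insert the Gamma integral $\Gamma(s)|u|^{-2s}=\int_{0}^{\infty}e^{-t|u|^{2}}t^{s-1}\,dt$ with $u=z_{0}+\gamma$: multiplying by $(\overline{z}_{0}+\overline{\gamma})^{a}$ and summing over $\gamma\in\Gamma\setminus\{-z_{0}\}$ against $\chi_{w_{0}}(\gamma)$, which is legitimate for $\operatorname{Re}(s)>a/2+1$ by the stated absolute convergence, gives
\begin{equation*}
\Gamma(s)K_{a}^{*}(z_{0},w_{0},s)=\int_{0}^{\infty}t^{s-1}\theta(t)\,dt,\qquad
\theta(t):=\sum_{\gamma\in\Gamma\setminus\{-z_{0}\}}(\overline{z}_{0}+\overline{\gamma})^{a}\,e^{-t|z_{0}+\gamma|^{2}}\,\chi_{w_{0}}(\gamma).
\end{equation*}
The whole problem is then transferred to the behaviour of $\theta(t)$ as $t\to 0^{+}$ and $t\to\infty$.

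Second, and this is the technical heart, I would establish a transformation law for the full lattice sum $\theta^{\circ}$ (the same sum but over all of $\Gamma$) under $t\mapsto 1/(A^{2}t)$ by twisted Poisson summation. Writing $\chi_{w_{0}}(\gamma)$ as a unitary additive character and recognising $(\overline{z}_{0}+\overline{x})^{a}e^{-t|z_{0}+x|^{2}}$ as a Gaussian times a polynomial in the conjugate variable, its Fourier transform is again a Gaussian times a polynomial, but with the Gaussian width inverted and the polynomial power transported to the dual variable. Here the normalisation $A(\Gamma)=(\text{Area of }\C/\Gamma)/\varpi$ is exactly what makes the Jacobian factors collapse to clean powers of $A$: the character twist shifts the dual summation by the point attached to $w_{0}$, so that after summation the roles of $z_{0}$ and $w_{0}$ are exchanged and one obtains a relation of the shape $\theta^{\circ}(t)=A^{-(a+1)}t^{-(a+1)}\,\chi_{z_{0}}(w_{0})\,\widetilde{\theta}^{\circ}(1/(A^{2}t))$, where $\widetilde{\theta}^{\circ}$ is the analogous full theta series with $z_{0}$ and $w_{0}$ interchanged. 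The properties i)--iii) of $\chi_{w}(z)$ recorded above are the tools that identify the accumulated phase as precisely $\chi_{z_{0}}(w_{0})$; verifying this constant and the conjugate-variable bookkeeping is the delicate computation.

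Finally I would split $\int_{0}^{\infty}=\int_{1/A}^{\infty}+\int_{0}^{1/A}$, noting that the Eisenstein--Kronecker--Lerch sum $\theta$ differs from $\theta^{\circ}$ only by the excluded term $\gamma=-z_{0}$, which vanishes unless $a=0$ and $z_{0}\in\Gamma$. The tail $\int_{1/A}^{\infty}t^{s-1}\theta(t)\,dt$ is entire in $s$ by the exponential decay of $\theta$ as $t\to\infty$. In the remaining piece I substitute $t\mapsto 1/(A^{2}t)$ and insert the transformation law, which turns $\int_{0}^{1/A}$ into $A^{a+1-2s}\chi_{z_{0}}(w_{0})$ times the corresponding tail integral attached to $\Gamma(a+1-s)K_{a}^{*}(w_{0},z_{0},a+1-s)$; assembling the two contributions yields both the meromorphic continuation to all of $\C$ and, upon comparison with the symmetric expansion of $\Gamma(a+1-s)K_{a}^{*}(w_{0},z_{0},a+1-s)$, the functional equation ii). The sole obstruction to holomorphy is the constant term of the dual full theta series as its width degenerates near $t=0$: this term equals $0^{a}$ up to a unit, hence survives exactly when $a=0$ and $w_{0}\in\Gamma$, where it contributes $\int_{0}^{1/A}t^{s-2}\,dt$ and therefore the single simple pole at $s=1$ asserted in i). The main obstacle I anticipate is carrying out the twisted Fourier/Poisson step cleanly enough that all powers of $t$ and $A$ and the phase $\chi_{z_{0}}(w_{0})$ emerge with exactly the normalisation dictated by the statement.
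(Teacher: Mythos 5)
The paper offers no argument of its own here: the proof is a pair of citations, to \cite{We} VIII \S 13 for $a\geq 0$ and to \cite{BKT} Proposition 2.4 for $a\leq 0$. Your Mellin-transform/theta-transformation argument is exactly the classical Hecke--Weil proof carried out in the first of those references, and for $a\geq 0$ your outline is sound: the unfolding $\Gamma(s)|u|^{-2s}=\int_0^\infty e^{-t|u|^2}t^{s-1}dt$, the self-duality of $\Gamma$ under the pairing $\chi_w(z)$ (which is what makes the Poisson-dual sum again a sum over $\Gamma$ shifted by the point attached to $w_0$), the splitting of the integral at $t=1/A$, and the identification of the simple pole at $s=1$ with the constant term of the dual theta series precisely when $a=0$ and $w_0\in\Gamma$ are all correct, modulo the phase bookkeeping you rightly flag as the delicate computation.

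The genuine gap is the case $a<0$, which the proposition asserts and which your argument cannot reach as written. The pivotal step --- ``recognising $(\overline{z}_0+\overline{x})^a e^{-t|z_0+x|^2}$ as a Gaussian times a polynomial in the conjugate variable'' --- fails for negative $a$: the function then has a pole at $x=-z_0$, is not even in $L^1(\C)$ once $a\leq -2$, and its Fourier transform is not of Gaussian-times-polynomial shape, so the theta transformation law on which everything rests is unavailable. One must instead reduce to nonnegative upper index via the identity $\overline{u}^{\,a}/|u|^{2s}=u^{-a}/|u|^{2(s-a)}$, which exhibits $K_a^{*}(z_0,w_0,s)$ as a complex conjugate of $K_{-a}^{*}$ at the shifted argument $\overline{s}-a$ (with $w_0$ replaced by $-w_0$), and then transport the already-proved functional equation; the resulting identity has $\Gamma(s-a)$ and $\Gamma(1-s)$ in place of $\Gamma(s)$ and $\Gamma(a+1-s)$, together with a sign $(-1)^a$, and one needs the reflection formula $\Gamma(z)\Gamma(1-z)=\pi/\sin(\pi z)$ to see that this is equivalent to the stated symmetric form. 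That reduction is precisely the content of \cite{BKT} Proposition 2.4 and is why the paper cites it separately; your write-up should either restrict to $a\geq 0$ or supply this additional step.
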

\begin{proof}
If $a\geq 0$, see (\cite{We} VIII \S13). If $a\leq 0$, see (\cite{BKT} Proposition 2.4.).
\end{proof}

\begin{Def}[Eisenstein-Kronecker number]\leavevmode\\
Let $z_0,w_0\in \mathbb{C}$ and we take $a,b\in \mathbb{Z}$ as $(a,b)\neq (1,-1)$ if $w_0\in \Gamma$.
The \textbf{Eisenstein-Kronecker numbers} $e_{a,b}^{*}(z_{0},w_{0})$ is defined by
$$
e_{a,b}^{*}(z_{0},w_{0}):=K_{a+b}^{*}(z_{0},w_{0},b)=
\sum_{\gamma\in \Gamma\setminus \{ -z_{0}\}}\frac{(\overline{z}_{0}+\overline{\gamma})^{a}}{(z_{0}+\gamma)^{b}}\chi_{w_{0}}(\gamma).
$$
For $(a,b)=(0,0)$, we have $e_{0,0}^{*}(z_{0},w_{0}):=K_{0}^{*}(z_{0},w_{0},0)=-\chi_{z_{0}}(w_{0})$.
\end{Def}  
We define the Kronecker theta function. The Kronecker theta function was defined by using the reduced theta function associated to the divisor $[0]$
(i.e. the holomorphic pseudo-periodic function with the Appell-Humbert data)
by using that a group of isomorphism classes of invertible sheaves on the torus $\C/\Gamma$ is classified by Appell-Humbert's theorem.
For details, see (\cite{BK} Example 1.9).
\begin{Def}[Kronecker theta function]
Let $\theta(z)$ be a reduced theta function associated to the divisor $[0]$ defined by (\cite{BK} Example 1.9).
$\theta(z)$ is characterized by $\theta'(0)=1$.\\
\quad By using this $\theta(z)$, the Kronecker theta function is defined as follows.
For any $z,w\in \mathbb{C}$, we define the \textbf{Kronecker theta function} $\Theta(z,w)$ by
$$
\Theta(z,w):=\frac{\theta(z+w)}{\theta(z)\theta(w)}.
$$
\quad In addition, for $z_{0},w_{0}\in \mathbb{C}$, we define
\begin{equation}
\Theta_{z_{0},w_{0}}(z,w):=\exp\left( -\frac{z_{0}\overline{w}_{0}}{A} \right)\exp\left( -\frac{z\overline{w}_{0}+w\overline{z}_{0}}{A} \right)\Theta(z+z_{0}, w+w_{0}).
\label{twisted Kronecker theta function}
\end{equation}
\end{Def}
According to (\cite{BK} Proposition 1.16), $\Theta_{z_{0},w_{0}}(z,w)$ has the following distribution relation: 
Let $c,c'\in \Gamma$, $n$ be a natural number, and $z_0, w_0\in \C$. Then we have
\begin{align}
\label{distribution relation of Kronecker theta function 1}
&\sum_{w_n\in \pi^{-n}\Gamma/\Gamma}\chi_{w_n}(c) \Theta_{z_0, w_0+w_n}(z,w)=
\pi^{n} \chi_{c}(w_0) \Theta_{(z_0-c)/\overline{\pi}^n, \pi^n w_0}(z/\overline{\pi}^n, \pi^n w)&\\
\label{distribution relation of Kronecker theta function 2}
&\sum_{z_m\in \pi^{-m}\Gamma/\Gamma}\chi_{c'}(z_m) \Theta_{z_0+z_m, w_0}(z,w)=
\pi^m  \Theta_{\pi^m z_0, c'/\overline{\pi}^m}(\pi^m z, w/\overline{\pi}^m)&\\
\notag
\end{align}
\quad $\Theta_{z_{0},w_{0}}(z,w)$ can be expanded to Laurent series of $z,w$ as the generating function of the following Eisenstein-Kronecker number.
\begin{Thm}
$\Theta_{z_{0},w_{0}}(z,w)$ has a Laurent expansion in the neighborhood of $(z,w)=(0,0)$, that is,
\begin{align}
\Theta_{z_{0},w_{0}}(z,w)=\chi_{z_{0}}(w_{0})\frac{\delta_{z_{0}}}{z}+\frac{\delta_{w_{0}}}{w}+
\sum_{a,b\geq 0}(-1)^{a+b}\frac{e_{a,b+1}^{*}(z_{0}, w_{0})}{a!A^a}z^{b}w^{a},
\label{Laurent expansion of Kronecker theta function}
\end{align}
where $\delta_{x}$ is defined by
$$
\delta_{x}=
\begin{cases}
1&(x\in \Gamma)\\
0&(\text{otherwise})
\end{cases}
$$
\label{Theorem of Laurent expansion of Kronecker theta function}
\end{Thm}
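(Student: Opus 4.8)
The plan is to reduce the statement to a single generating-function identity that expresses $\Theta_{z_{0},w_{0}}(z,w)$ as an Eisenstein-summed Kronecker series over the lattice, and then to read off the Laurent coefficients by an elementary expansion. The starting point (essentially Weil, \cite{We} VIII \S13) is the identity
\begin{equation*}
\Theta_{z_{0},w_{0}}(z,w)=\sum_{\gamma\in\Gamma}{}^{e}\ \frac{\chi_{w_{0}}(\gamma)}{z+z_{0}+\gamma}\exp\left(-\frac{w(\overline{z}_{0}+\overline{\gamma})}{A}\right),
\end{equation*}
where $\sum^{e}$ denotes Eisenstein summation, which is forced on us because the series is only conditionally convergent. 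First I would establish this identity, proving it by comparing the poles and the quasi-periodicity of both sides as functions of $z$ and of $w$: one uses that $\theta$ vanishes exactly on $\Gamma$ with $\theta'(0)=1$, together with the transformation rules i)--iii) for $\chi_{w}(z)$, to see that the two sides have the same divisor and the same automorphy factors, so their ratio is an entire, bounded, hence constant function, pinned down by a single normalization.

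Granting this identity, the coefficient extraction is formal. For $\gamma\neq-z_{0}$ and $z$ small I would expand the Cauchy kernel as $(z+z_{0}+\gamma)^{-1}=\sum_{b\geq0}(-1)^{b}(z_{0}+\gamma)^{-(b+1)}z^{b}$ and the exponential as $\exp(-w(\overline{z}_{0}+\overline{\gamma})/A)=\sum_{a\geq0}(-1)^{a}(a!A^{a})^{-1}(\overline{z}_{0}+\overline{\gamma})^{a}w^{a}$. Multiplying and summing over $\gamma\neq-z_{0}$, the coefficient of $z^{b}w^{a}$ becomes
\begin{equation*}
\frac{(-1)^{a+b}}{a!A^{a}}\sum_{\gamma\neq-z_{0}}\frac{(\overline{z}_{0}+\overline{\gamma})^{a}}{(z_{0}+\gamma)^{b+1}}\chi_{w_{0}}(\gamma)=\frac{(-1)^{a+b}}{a!A^{a}}\,e_{a,b+1}^{*}(z_{0},w_{0}),
\end{equation*}
which is exactly the regular part asserted in (\ref{Laurent expansion of Kronecker theta function}).

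Next I would pin down the two polar terms. The term $\gamma=-z_{0}$, which is present only when $z_{0}\in\Gamma$, contributes $\chi_{w_{0}}(-z_{0})/z$; by property i) one has $\chi_{w_{0}}(-z_{0})=\chi_{z_{0}}(w_{0})$, giving the term $\chi_{z_{0}}(w_{0})\delta_{z_{0}}/z$. The pole $\delta_{w_{0}}/w$ in $w$ is subtler: it comes not from any single summand but from the failure of absolute convergence, equivalently from the simple pole of $K_{0}^{*}(z_{0},w_{0},s)$ at $s=1$ recorded in Proposition \ref{functional equation of Eisenstein-Kronecker-Lerch series} i), which occurs precisely when $w_{0}\in\Gamma$. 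I would recover it directly from the defining expression of $\Theta_{z_{0},w_{0}}$ through $\theta$, since $\theta(w+w_{0})$ has a simple zero at $w=0$ when $w_{0}\in\Gamma$, checking that the residue equals $1$ using $\theta'(0)=1$ and the normalizing exponential factors.

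The hard part will be the rigorous justification of the interchange of summation with coefficient extraction. The series defining $e_{a,b+1}^{*}$ converges absolutely only for $b\geq a+2$, coming from the condition $\operatorname{Re}(b+1)>(a+b+1)/2+1$ of Definition \ref{Eisenstein-Kronecker-Lerch series} applied to $K_{a+b+1}^{*}(z_{0},w_{0},b+1)$; outside this range $e_{a,b+1}^{*}$ is defined only through the meromorphic continuation of $K_{a+b+1}^{*}$ in Proposition \ref{functional equation of Eisenstein-Kronecker-Lerch series}. Thus the formal expansion above is literally valid only in the region of absolute convergence, and to reach all $a,b\geq0$ I would need Weil's Mellin/Laplace-transform argument, writing $(z_{0}+\gamma)^{-(b+1)}$ as a Gamma integral and applying Poisson summation, both to prove the Eisenstein-summation identity itself and to continue the coefficient identity analytically. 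Tracking the normalizing factor $A$, the signs $(-1)^{a+b}$, the factorial $a!$, and the $\chi$-twists faithfully through this continuation is the delicate bookkeeping on which the proof ultimately rests.
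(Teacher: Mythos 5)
The paper does not actually prove this theorem; its ``proof'' is the citation to \cite{BK} Theorem~1.17, whose argument is exactly the classical Kronecker--Weil one you outline: establish the Eisenstein-summed generating identity (Kronecker's theorem, \cite{BK} Theorem~1.13), then extract Laurent coefficients. Your formal bookkeeping is correct: the geometric and exponential expansions do give $(-1)^{a+b}e^{*}_{a,b+1}(z_{0},w_{0})/(a!A^{a})$ as the coefficient of $z^{b}w^{a}$, the $\gamma=-z_{0}$ summand accounts for $\chi_{z_{0}}(w_{0})\delta_{z_{0}}/z$ via property i) of $\chi$, and you are right that the $\delta_{w_{0}}/w$ pole is a boundary effect of conditional convergence (visible directly from the zero of $\theta(w+w_{0})$ at $w=0$ when $w_{0}\in\Gamma$, with residue pinned by $\theta'(0)=1$) rather than the contribution of any single summand. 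The bound $b\geq a+2$ for absolute convergence of $e^{*}_{a,b+1}$ is also computed correctly from Definition~\ref{Eisenstein-Kronecker-Lerch series}.

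There are, however, two places where the proposal defers rather than proves, and they are where the theorem actually lives. First, your plan to establish the generating identity by comparing divisors and automorphy factors and concluding by a Liouville-type argument presupposes that the Eisenstein-summed series already defines a function with known analytic behaviour and a well-defined divisor; for a conditionally convergent lattice sum this is precisely the content of Kronecker's theorem and must be proved first, via the $q$-expansion of $\theta$ or Poisson summation --- it cannot be obtained by divisor comparison alone. You should also verify the exponential factors in your claimed identity against the normalization (\ref{twisted Kronecker theta function}): the reduced theta function is not holomorphic, and carrying its non-holomorphic part through $\Theta(z+z_{0},w+w_{0})$ produces additional exponential factors per summand (in \cite{BK} the identity carries a factor of the form $\exp(z\overline{w}/A)$) whose cancellation against the twists is exactly what makes the stated expansion purely a series in $z^{b}w^{a}$. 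Second, as you acknowledge, the term-by-term extraction is literally valid only in the range of absolute convergence, and every remaining coefficient involves $e^{*}_{a,b+1}$ defined only through the meromorphic continuation of Proposition~\ref{functional equation of Eisenstein-Kronecker-Lerch series}; extending the coefficient identity to all $a,b\geq 0$ requires either Weil's integral-representation argument or the functional-equation/differential-operator recursion of \cite{BK}, neither of which you carry out. As written, the proposal is a correct and well-organized outline of the cited proof with its two analytically essential steps left unexecuted, so it does not yet constitute a proof.
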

\begin{proof}
See (\cite{BK} \S1.14 Theorem 1.17).
\end{proof}
Substituting $w_{0}=0$ for the formula (\ref{Laurent expansion of Kronecker theta function}), 
we define a function $F_{z_{0},b}(z)$ as follows.
\begin{Def}
For any $z_{0}\in \mathbb{C}$, we define $F_{z_{0},b}$ by a function satisfying
\begin{equation}
\Theta_{z_{0},0}(z,w)=\sum_{b\geq 0}F_{z_{0},b}(z)w^{b-1}.
\label{definition of $F_{z_{0},b}$}
\end{equation}
If $z_{0}=0$, we define $F_{b}(z):=F_{0,b}(z)$. We observe $F_{b}(z)$ if $b=0,1$.\\
\quad $F_{0}(z)=1$.
Noting that $\Theta_{0,0}(z,w)=\Theta(z,w):=\theta(z+w)/\theta(z)\theta(w)$ and observing coefficients of $w^{0}$ in the formula 
(\ref{Laurent expansion of Kronecker theta function}) and (\ref{definition of $F_{z_{0},b}$}),
we find that $F_{1}(z)$ satisfies
 $$
F_{1}(z)=\lim_{w\to 0}(\Theta(z,w)-w^{-1})=\frac{\theta'(z)}{\theta(z)}.
$$ 
$F_{z_{0},b}(z)$ is dependent only on a choice of $z_{0}$ modulo $\Gamma$ because we have
$$
\Theta_{z_{0}+\gamma, 0}(z,w)=\exp\left[-\frac{w(\bar{z}_{0}+\bar{\gamma})}{A}\right]\Theta(z+z_{0}+\gamma,w)=\Theta_{z_{0},0}(z,w).
$$
\quad As we define later, the $p$-adic analogue of $F_{z_{0},b}$ for a variable $z_{0}$ is constructed as the Coleman function by glueing together each unit open disk.
By using the $p$-adic analogue of $F_{z_{0},b}$, we construct the $p$-adic analogue of the Eisenstein-Kronecker series $E_{m,n}$.
We have the Laurent expansion of $F_{z_{0},b}(z)$ from Theorem \ref{Theorem of Laurent expansion of Kronecker theta function}.
\begin{Cor}[Generating function]
For any $b\geq 0$, the Laurent series of $F_{z_{0},b}(z)$ at $z=0$ can be written as
$$
F_{z_{0},b}(z)=\frac{\delta_{z_{0},b}}{z}+\sum_{a\geq 0}(-1)^{a+b-1}\frac{e_{a,b}^{*}(0,z_{0})}{a!A^a}z^a,
$$
where  
$$
\delta_{x,b}=
\begin{cases}
1&(b=0\ \text{and}\  x\in \Gamma)\\
0&(\text{otherwise})
\end{cases}
$$
\end{Cor}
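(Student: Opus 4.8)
The plan is to obtain $F_{z_{0},b}(z)$ as a Laurent coefficient of the Kronecker theta function and then rewrite the resulting Eisenstein-Kronecker numbers into the form in the statement. By its defining relation $\Theta_{z_{0},0}(z,w)=\sum_{b\geq 0}F_{z_{0},b}(z)w^{b-1}$, the function $F_{z_{0},b}(z)$ is exactly the coefficient of $w^{b-1}$ in the Laurent expansion of $\Theta_{z_{0},0}(z,w)$ about $(z,w)=(0,0)$. I therefore have two handles on that expansion: the specialization of Theorem \ref{Theorem of Laurent expansion of Kronecker theta function} at $w_{0}=0$, and the generating series itself. Matching coefficients of powers of $w$ will produce a formula for $F_{z_{0},b}$.

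First I would set $w_{0}=0$ in the expansion (\ref{Laurent expansion of Kronecker theta function}). Since $\chi_{z_{0}}(0)=1$ and $\delta_{0}=1$, it collapses to
$$
\Theta_{z_{0},0}(z,w)=\frac{\delta_{z_{0}}}{z}+\frac{1}{w}+\sum_{a,b\geq 0}(-1)^{a+b}\frac{e_{a,b+1}^{*}(z_{0},0)}{a!A^{a}}z^{b}w^{a}.
$$
Reading off the coefficient of $w^{b-1}$ already expresses $F_{z_{0},b}(z)$ as a Laurent series in $z$, but the Eisenstein-Kronecker numbers that appear, namely $e_{b-1,\,\bullet}^{*}(z_{0},0)$, carry $z_{0}$ in the \emph{first} argument, whereas the statement requires it in the \emph{second}.

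The key step that reconciles the two is the symmetry $\Theta_{z_{0},0}(z,w)=\Theta_{0,z_{0}}(w,z)$. This follows directly from the definition (\ref{twisted Kronecker theta function}) together with the manifest symmetry $\Theta(z,w)=\Theta(w,z)$ of the untwisted theta function; more precisely $\Theta_{z_{0},w_{0}}(z,w)=\chi_{z_{0}}(w_{0})\,\Theta_{w_{0},z_{0}}(w,z)$, and $\chi_{z_{0}}(0)=1$. Expanding the right-hand side $\Theta_{0,z_{0}}(w,z)$ by Theorem \ref{Theorem of Laurent expansion of Kronecker theta function}, now with $z_{0}$ in the second slot, and extracting the coefficient of $w^{b-1}$ gives precisely
$$
F_{z_{0},b}(z)=\frac{\delta_{z_{0},b}}{z}+\sum_{a\geq 0}(-1)^{a+b-1}\frac{e_{a,b}^{*}(0,z_{0})}{a!A^{a}}z^{a},
$$
the second subscript $b+1$ of Theorem \ref{Theorem of Laurent expansion of Kronecker theta function} becoming the running index $b$ of the statement once the coefficient of $w^{b-1}$ is taken. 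Equivalently, one may keep the first expansion and invoke the functional equation of Proposition \ref{functional equation of Eisenstein-Kronecker-Lerch series} ii): with the $K^{*}$-index taken to be $a+b$ and $s=a+1$, and using $\chi_{z_{0}}(0)=1$, it yields $a!\,e_{b-1,a+1}^{*}(z_{0},0)=A^{b-1-a}(b-1)!\,e_{a,b}^{*}(0,z_{0})$, which is exactly the identity needed to convert the first form into the second term by term.

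The step I expect to be the main obstacle is the bookkeeping of the polar term $\delta_{z_{0},b}/z$ and the low-order values of $b$. The $1/w$ and $1/z$ poles occupy different monomials in the two expansions, and the factor $\Gamma(b)=(b-1)!$ in the functional equation degenerates exactly at the small value of $b$ where the residue lives; using the pole statement for $K_{0}^{*}$ in Proposition \ref{functional equation of Eisenstein-Kronecker-Lerch series} i), one must check that the residue lands in the correct monomial and that the stated condition on $\delta_{z_{0},b}$ is recovered. Away from this single polar contribution the coefficient comparison is routine, so the whole argument reduces to the symmetry identity together with a careful treatment of the pole.
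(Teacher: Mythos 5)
Your argument is correct, and it is worth noting that the paper itself gives no proof here --- it simply cites \cite{BKT} Corollary 2.11 --- so what you have written is a genuine filling-in rather than a reproduction of the paper's argument. The two ingredients you use are the right ones: the specialization of (\ref{Laurent expansion of Kronecker theta function}) at $w_{0}=0$, and the symmetry $\Theta_{z_{0},w_{0}}(z,w)=\chi_{z_{0}}(w_{0})\,\Theta_{w_{0},z_{0}}(w,z)$, which does follow at once from (\ref{twisted Kronecker theta function}) together with $\Theta(z,w)=\Theta(w,z)$; with $w_{0}=0$ it gives $\Theta_{z_{0},0}(z,w)=\Theta_{0,z_{0}}(w,z)$, and expanding the right-hand side by Theorem \ref{Theorem of Laurent expansion of Kronecker theta function} (first variable $w$, second variable $z$) and reading off the coefficient of $w^{b-1}$ places $z_{0}$ in the second slot of $e^{*}$ exactly as required. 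Your alternative route through Proposition \ref{functional equation of Eisenstein-Kronecker-Lerch series} ii) also checks out: with index $a+b$ and $s=a+1$ one gets $a!\,e^{*}_{b-1,a+1}(z_{0},0)=A^{b-1-a}(b-1)!\,e^{*}_{a,b}(0,z_{0})$, which converts the first expansion into the second term by term.

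The one point you defer --- ``that the stated condition on $\delta_{z_{0},b}$ is recovered'' --- should be carried out, because it does \emph{not} come out as printed. In the expansion of $\Theta_{0,z_{0}}(w,z)$ the polar term $\delta_{z_{0}}/z$ is independent of $w$, hence sits in the coefficient of $w^{0}$, i.e.\ in $F_{z_{0},1}$, while the coefficient of $w^{-1}$ is the constant $1$, so that $F_{z_{0},0}=1$ has no pole. Your derivation therefore yields $\delta_{x,b}=1$ if and only if $b=1$ and $x\in\Gamma$, in agreement with (\ref{Algebraicity of generating function}) and with the explicit values $F_{1}(z)=\theta'(z)/\theta(z)$ and $F_{0}(z)=1$ computed just above the Corollary, but in conflict with the condition ``$b=0$ and $x\in\Gamma$'' in the statement. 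That condition is a typo (it would put a pole into the constant function $F_{z_{0},0}=1$); your proof establishes the corrected statement, which is also the one actually used later in the paper.
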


\begin{proof}
See (\cite{BKT} Corollary 2.11).
\end{proof}
\end{Def}

When we define the $p$-adic Eisenstein-Kronecker series later, we use the connection function $L_{n}(z)$ of $F_{b}(z)$ for $b\geq 0$.
We define the connection function $L_{n}(z)$ by 
$$
\Xi(z,w):=\exp(-F_{1}(z)w)\Theta(z,w)=\sum_{n\geq 0}L_{n}(z)w^{n-1}.
$$
\quad Since $F_{b}(z):=F_{0,b}(z)$,
$
\displaystyle{\Theta(z,w)=\Theta_{0,0}(z,w)=\sum_{b\geq 0}F_{b}(z)w^{b-1}},
$
and by giving ``$\exp$" Taylor expansion, we have
\begin{equation}
L_{n}(z)=\sum_{\substack{a+b=n\\ a\geq 0, b\geq 0}}\frac{(-F_{1}(z))^a F_{b}(z)}{a!}=\sum_{b=0}^{n}\frac{(-F_{1}(z))^{n-b}}{(n-b)!}F_{b}(z).
\label{expansion of $L_{n,z_0}$ by $F_{b}(z)$}
\end{equation}
By the translation by $\gamma\in\Gamma$ of $\Xi(z,w)$, $L_{n}(z)$ is the periodic function on $\mathbb{C}/ \Gamma$, i.e. the elliptic function and
the holomorphic function on $\mathbb{C}\setminus \Gamma$.
If $n=0,1$, by the formula (\ref{expansion of $L_{n,z_0}$ by $F_{b}(z)$}), we have
$$
L_{0}(z)=F_{0}(z)=1,\quad  L_{1}(z)=-F_{1}(z)F_{0}(z)+F_{1}(z)=0.
$$
\quad Since $\Theta_{z_{0},0}(z,w)=\exp(F_{z_{0},1}(z)w)\Xi(z+z_{0}, w)$, we have a relation
\begin{equation}
F_{z_{0},b}(z)=\sum_{n=0}^{b}\frac{F_{z_{0},1}(z)^{b-n}}{(b-n)!}L_{n}(z+z_{0})
\label{relation between $F_{z_{0},b}(z)$ and $L_{n}(z)$}
\end{equation}
between $F_{z_{0},b}(z)$ and $L_{n}(z)$.\\
\quad Now we assume that a complex torus has an algebraic model.
Let $K$ be an imaginary quadratic field and we fix an immersion $K\hookrightarrow \mathbb{C}$.
We define an elliptic curve $E$ over $K$ by a Weierstrass equation
\begin{equation*}
y^2=4x^3-g_{2}x-g_{3},\quad g_{2},g_{3}\in K,
\end{equation*}
and its invariant $\omega$ by $\omega=dx/y$.
By the uniformization theorem, there exists a period lattice $\Gamma\subset \mathbb{C}$ of $E$ satisfying an isomorphism
$$
\xi:\mathbb{C}/\Gamma \overset{\sim}{\longrightarrow}E(\mathbb{C}),\quad z\mapsto (\wp(z),\wp'(z)),
$$
where $\wp(z)$ is the Weierstrass $\wp$-function.
Then we have $\omega=d\wp(z)/\wp'(z)=dz$.
According to (\cite{BKT} Proposition 1.12), the connection function $L_{n}(z)$ is \textit{algebraic} since we have $L_{n}(z)\in K[\wp(z), \wp'(z)]$
 as the rational function on $E$ over $K$.
In addition, we assume that the elliptic curve \textit{$E$ has complex multiplication by the integer ring $\mathcal{O}_{K}$} of $K$.
Then by Damerell's theorem, $\Theta_{z_0, w_0}(z,w)$ and $F_{z_0, b}(z)$ are \textit{algebraic} in the following sense:
\begin{itemize}
\item
If $z_0, w_0$ correspond to torsion points in
$\C/\Gamma\cong E(\C)$, then 
\begin{equation}
\Theta_{z_0, w_0}(z,w)-\chi_{z_0}(w_0)\frac{\delta_{z_0}}{z}-\frac{\delta_{w_0}}{w}\in \overline{\Q}[[z,w]],
\label{Algebraicity of Kronecker theta function}
\end{equation}
where if $x\in \Gamma$ then $\delta_{x}=1$, and otherwise $\delta_{x}=0$ (see \cite{BFK} Theorem 2.13).
\item
If $z_{0}\in \mathbb{C}$ corresponds to a torsion point in $\mathbb{C}/\Gamma\cong E(\mathbb{C})$,
then
\begin{equation}
F_{z_{0},b}(z)-\frac{\delta_{z_{0},b}}{z}\in \overline{\mathbb{Q}}[[z]],
\label{Algebraicity of generating function}
\end{equation}
where if $b=1$ and $x\in \Gamma$ then $\delta_{x,b}=1$, and otherwise $\delta_{x,b}=0$ (see \cite{BFK} Corollary 2.14).
\end{itemize}
These algebraicities allow us to view this value as an element in $\C_{p}$ through the immersion $\overline{\Q}\hookrightarrow \C_{p}$.
In addition, since $E$ has complex multiplication by the integer ring $\mathcal{O}_{K}$, by (\cite{Sil2} II \S 1 Proposition 1.1),
there is a unique isomorphism
$$
[\cdot]: \mathcal{O}_{K}\overset{\sim}{\longrightarrow}\operatorname{End}(E)
$$
such that for any invariant differential $\omega=dx/y$ on $E$,  
 $$
 [\alpha]^{*}(\omega)=\alpha\omega \qquad \text{for all}\ \alpha\in \mathcal{O}_{K}.
 $$
 For any $0\neq \alpha\in \mathcal{O}_{K}$, let $E[\alpha]$ be a subgroup of $E(\overline{\mathbb{Q}})$ such that
$
E[\alpha]:=\{P\in E(\overline{\mathbb{Q}})|\ [\alpha]P=0  \}.
$
According to (\cite{BFK} Proposition 2.15), the function $F_{z_0, b}(z)$ is known to satisfy the following distribution relation with respect to $E[\alpha]$:
\begin{equation}
\sum_{z_{\alpha}\in E[\alpha]}F_{z_0+z_\alpha, b}(z)=\alpha\overline{\alpha}^{1-b}F_{\alpha z_0, b}(\alpha z)\quad  \text{for any}\ 0\neq\alpha \in \mathcal{O}_{K}.
\end{equation}

\subsection{Review of the $p$-adic analogue of the Eisenstein-Kronecker series}\leavevmode\\
\quad For an integer $b\geq 0$, we review that the $p$-adic analogue of $F_{z_0,b}$ is constructed as the Coleman function on an elliptic curve
 along \cite{BFK}.\\
\quad Let $E$ be an elliptic curve in \S0.
Let $t:=-2x/y$ be a formal parameter of $E$ at the origin.
Let $\widehat{E}$ be a formal group of $E$ for $t$ equipped with a maximal ideal of  a complete local ring of $\mathcal{O}_{K}$
and group operations $\oplus$. Let $\lambda: \widehat{E}\overset{\sim}{\to}\widehat{\G}_{a}$ be a normalized formal logarithm
 for an additive formal group $\widehat{\G}_{a}$. For a torsion point $z_0\in E(\overline{\Q})_{\operatorname{tors}}$, we define 
 $\widehat{F}_{z_0, b}(t)$ by
 $$
 \widehat{F}_{z_0, b}(t):=F_{z_0, b}(z)|_{z=\lambda(t)}=F_{z_0, b}(\lambda(t))
 $$
By formula (\ref{relation between $F_{z_{0},b}(z)$ and $L_{n}(z)$}) and $\lambda(z_0)=0$, we have
\begin{equation}
\widehat{F}_{z_0, b}(t)=\sum_{n=0}^{b}\frac{\widehat{F}_{z_0,1}(t)^{b-n}}{(b-n)!}\widehat{L}_{z_0, n}(t)
\end{equation}
where $\widehat{L}_{z_0, n}(t):=L_{n}(z+z_0)|_{z=\lambda(t)}$.\\
\quad By formula (\ref{Algebraicity of generating function}), we consider $\widehat{F}_{z_{0},b}(t)$ as 
power series with $\mathbb{C}_{p}$-coefficients through the immersion $\overline{\Q}\hookrightarrow \C_{p}$: In other words,
if $z_{0}\in E(\overline{\mathbb{Q}})$ is a torsion point with an order prime to $\mathfrak{p}$, according to (\cite{BKT} Proposition 2.16),
the following series converges
\begin{equation}
\widehat{F}_{z_{0},b}(t)-\frac{\delta_{z_{0},b}}{t}=\sum_{a\geq 0}(-1)^{a+b-1}\frac{e_{a,b}^{*}(0, z_{0})}{a!A^a}z^a\biggm|_{z=\lambda(t)}\in \mathbb{C}_{p}[[t]]
\label{BKT Proposition 2.16}
\end{equation}
converges on $B(0,1):=\{t\in \mathbb{C}_{p}|\ |t|<1 \}$ if $b\neq 1$ or $z_{0}\neq 0$.
In particular, this series is a rigid analytic function on $B(0,1)$.
Moreover $\widehat{F}_{1}(t):=\widehat{F}_{0,1}(t)$ converges on $\{t\in \mathbb{C}_{p}|\ 0<|t|<1 \}$.\\
In addition, we have a formula for translation by $\pi^{n}$-torsion points.

\begin{Prop}[Translation]
Let $z_{0}\in E(\overline{\mathbb{Q}})$ be a torsion point with an order prime to $\mathfrak{p}$. Then we have
 $$
\widehat{F}_{z_{0},b}(t\oplus t_{n})=\widehat{F}_{z_{0}+z_{n},b}(t),
$$
where $t_{n}\in \widehat{E}[\pi^n]$ is a $\pi^n$-torsion point and $z_{n}\in E(\overline{\mathbb{Q}})_{\operatorname{tors}}$ is the image of $t_{n}$ through
an immersion map 
$\widehat{E}(\mathfrak{m}_{\mathbb{C}_{p}})_{\operatorname{tors}}\hookrightarrow 
E(\overline{\mathbb{Q}})_{\operatorname{tors}}\hookrightarrow \mathbb{C}/\Gamma$.
Here $\mathfrak{m}_{\mathbb{C}_{p}}$ is a maximal ideal of an integer ring $\mathcal{O}_{\mathbb{C}_{p}}$ of  $\mathbb{C}_{p}$.
\label{translation of generating function}
\begin{proof}
See (\cite{BFK} Lemma 2.17).
\end{proof}
\end{Prop}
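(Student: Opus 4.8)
The plan is to reduce the claimed translation formula to the algebraic building blocks of $\widehat{F}_{z_0,b}$ and to exploit that the formal logarithm $\lambda$ is a homomorphism of formal groups. By definition $\widehat{F}_{z_0,b}(t)=F_{z_0,b}(\lambda(t))$, and since $\lambda\colon\widehat{E}\xrightarrow{\sim}\widehat{\G}_a$ carries $\oplus$ to ordinary addition we have $\lambda(t\oplus t_n)=\lambda(t)+\lambda(t_n)$. Writing $\zeta=\lambda(t)$ and $c=\lambda(t_n)$, the asserted identity becomes the identity of power series $F_{z_0,b}(\zeta+c)=F_{z_0+z_n,b}(\zeta)$, where $t_n\in\widehat{E}[\pi^n]$ and the complex torsion point $z_n$ represent the same point of $E$ under the immersions in the statement. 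The substance of the proof is therefore to show that translating the $p$-adic function by the formal torsion point $t_n$ has the same effect as translating the parameter $z_0$ by $z_n$, with no extra factor.

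First I would dispose of the connection functions. By \eqref{relation between $F_{z_{0},b}(z)$ and $L_{n}(z)$} we may expand, writing the summation index as $j$ to avoid clashing with the level $n$,
\[
\widehat{F}_{z_0,b}(t)=\sum_{j=0}^{b}\frac{\widehat{F}_{z_0,1}(t)^{\,b-j}}{(b-j)!}\,\widehat{L}_{z_0,j}(t),\qquad \widehat{L}_{z_0,j}(t)=L_{j}(z+z_0)\big|_{z=\lambda(t)} .
\]
Since $\oplus$-translation is compatible with multiplication, and since the same expansion holds with $z_0$ replaced by $z_0+z_n$, it suffices to establish the translation formula for each factor $\widehat{L}_{z_0,j}$ and for the single function $\widehat{F}_{z_0,1}$; the case of general $b$ then follows by multiplying the resulting identities. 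For $\widehat{L}_{z_0,j}$ the claim is immediate: by \cite{BKT} the connection function $L_{j}$ is an elliptic function lying in $K[\wp,\wp']$, i.e.\ a rational function on $E$ with algebraic coefficients. Hence evaluating $L_j$ at $\lambda(t)+\lambda(t_n)+z_0$ is the same as evaluating $L_j$ at the $E$-point $t\oplus t_n\oplus P_{z_0}$, where $P_{z_0}$ is the point with logarithm $z_0$. Because $t_n$ and $z_n$ are the same torsion point of $E$, this point equals $t\oplus P_{z_0+z_n}$, giving $\widehat{L}_{z_0,j}(t\oplus t_n)=\widehat{L}_{z_0+z_n,j}(t)$. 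The essential input here is algebraicity of $L_j$ together with the compatibility of the complex and $p$-adic logarithms at torsion points.

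It remains to treat the base case $\widehat{F}_{z_0,1}$, which is the genuine heart of the matter. The strategy is to compare $\widehat{F}_{z_0,1}(t\oplus t_n)$ and $\widehat{F}_{z_0+z_n,1}(t)$ through their derivatives with respect to the invariant differential $\omega=dz$. A short computation with the generating function shows that $\tfrac{d}{dz}F_{z_0,1}(z)$ is an \emph{elliptic} function (the derivative of the quasi-periodic $\theta'/\theta$ differs from $-\wp(z+z_0)$ only by a constant), hence algebraic; by the argument just used for $L_j$ the two derivatives agree, so $\widehat{F}_{z_0,1}(t\oplus t_n)$ and $\widehat{F}_{z_0+z_n,1}(t)$ differ by a constant in $\C_p$. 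To show this constant vanishes I would invoke the uniqueness principle (property (A)) together with the algebraicity \eqref{Algebraicity of generating function}, matching the constant terms of the two power series, i.e.\ comparing $\widehat{F}_{z_0,1}(t_n)$ with the algebraic number $\widehat{F}_{z_0+z_n,1}(0)=e^{*}_{0,1}(0,z_0+z_n)$. Feeding the base case back into the displayed expansion then completes the proof of Proposition~\ref{translation of generating function}, and the same derivative-plus-constant scheme could alternatively be run directly by induction on $b$ via the Coleman integration characterization.

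The hard part will be precisely this last step. The complex function $F_{z_0,1}$ is only quasi-periodic, and the naive complex translation does \emph{not} give the clean formula: the substitution $z\mapsto z+z_n$ in the generating function produces an extra factor $\exp(w\,\overline{z_n}/A)$, equivalently a correction $\overline{z_n}/A$ in $F_{z_0,1}$, which is transcendental and does not vanish over $\C$. Thus one cannot simply transport the complex identity; the cleanness of the $p$-adic formula is a feature of the $p$-adic realization, in which the antiholomorphic correction is absent because $\widehat{F}_{z_0,1}$ is assembled from the algebraic Laurent coefficients $e^{*}_{a,1}(0,z_0)/A^a$ and the Frobenius-equivariant Coleman structure rather than from the real-analytic splitting used over $\C$. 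Making this vanishing rigorous—showing that the $p$-adic logarithm $\lambda(t_n)$ realizes the holomorphic shift by $z_n$ with no $\overline{z_n}/A$ term—is exactly where the specific construction of \cite{BFK} must be used, and it is the step I expect to require the most care.
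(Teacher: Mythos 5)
Your reduction to the case $b=1$ via the expansion $\widehat F_{z_0,b}=\sum_j \widehat F_{z_0,1}^{\,b-j}\,\widehat L_{z_0,j}/(b-j)!$ is sound, and your treatment of the connection functions $\widehat L_{z_0,j}$ is correct: these are rational functions on $E$ with algebraic coefficients, so formal-group translation by $t_n$ and parameter translation by $z_n$ agree on the nose. You have also correctly located the crux: over $\C$ one has $F_{z_0,1}(z+z_n)=F_{z_0+z_n,1}(z)+\overline{z}_n/A$, so the clean $p$-adic formula amounts to the disappearance of this antiholomorphic correction, and nothing in the complex theory forces that.

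The gap is that your proposed mechanism for closing the argument does not close it. After observing that the two sides have equal derivatives (since $dF_{z_0,1}/dz$ is elliptic, hence algebraic), you propose to kill the resulting constant by ``matching the constant terms,'' i.e.\ by comparing $\widehat F_{z_0,1}(t_n)$ with $e^{*}_{0,1}(0,z_0+z_n)$. But that equality \emph{is} the $b=1$ case of the proposition; it is not a boundary condition one can read off. In particular it cannot be obtained by evaluating the series term by term and appealing to the complex picture: the partial sums are algebraic, but the archimedean limit is $e^{*}_{0,1}(0,z_0+z_n)+\overline{z}_n/A$ while the claimed $p$-adic limit is $e^{*}_{0,1}(0,z_0+z_n)$, and there is no general principle identifying the limits of a sequence of algebraic numbers taken in two different completions. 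Your closing paragraph concedes exactly this (``where the specific construction of \cite{BFK} must be used''), so the essential content of the lemma is left unproved. What actually does the work in \cite{BFK} (Lemma 2.17, resting on the $p$-adic translation formula of \cite{BK}, Corollary 2.22) is the two-variable algebraic theta function: by $W$-integrality, $\widehat\Theta^{*}_{z_0,0}(s,t)$ is the generating function of a measure on $\Z_{p}\times\Z_{p}$; translation $s\mapsto s\oplus s_n$ twists that measure by the finite-order character $x\mapsto(1+\eta_{\mathfrak p}(s_n))^{x}$, whose values are $p^n$-th roots of unity and hence algebraic; and the algebraic translation formula for the Kronecker theta function identifies this twist with the shift $z_0\mapsto z_0+z_n$ with trivial correction factor when $w_0=0$. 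Reading off the coefficient of $t^{b-1}$ then gives the proposition for all $b$ at once. Note that the paper itself proves the proposition only by citing \cite{BFK}, so your attempt is more explicit than the paper's own proof, but the step you defer is precisely the one that citation is carrying.
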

\quad Let $F\subset \mathbb{C}_{p}$ be a finite extension field of $K_{\mathfrak{p}}$.
By abuse of notations, we denote the extension of $E$ into an integer ring $\mathcal{O}_{F}$ of $F$ by again $E$. 
For
$\pi:=\psi(\mathfrak{p})$, 
let
$$
\phi: E\to E
$$
be a Frobenius automorphism  induced by a multiplication by $[\pi]$.\\
\quad Let $E(\mathbb{C}_{p}):=E(\mathbb{C}_{p})^{\operatorname{an}}$ be 
an extension into $\mathbb{C}_{p}$ of a rigid analytic $F$-space $E(F)^{\operatorname{an}}$,
and we fix a variable $z$ on $E(\mathbb{C}_{p})$.\\
\quad Each residue disk of $E(\mathbb{C}_{p})$ contains a Teichm\"uller representative,
where a Teichm\"uller representative is a unique element in the residue disk fixed by some power of proper Frobenius.
By choice of Frobenius morphism $\phi$, a Teichm\"uller representative is a torsion point $z_{0}$ with an order prime to $\mathfrak{p}$.\\
\quad For $t=-2x/y$, a unit open disk 
$
\{t\in \mathbb{C}_{p}|\ |t|<1  \}
$
expresses the residue disk $]0[:=\operatorname{sp}^{-1}(0)\subset E(\mathbb{C}_{p})$ containing a unit element with respect to 
group operations of  the elliptic curve $E$,
where 
$\operatorname{sp}:\ E(\mathbb{C}_{p})^{\operatorname{an}}\overset{\text{reduction}}{\relbar\joinrel\relbar\joinrel\relbar\joinrel\relbar\joinrel\longrightarrow} 
E(\overline{\mathbb{F}}_{p})$
 is a specialization map.\\
\quad Let $z_{0}\in E(\mathbb{C}_{p})$ be a torsion point with an order prime to $\mathfrak{p}$ and 
$\tau_{z_{0}}: E\to E$ be $\tau_{z_{0}}(z):=z+z_{0}$.
For this $\tau_{z_{0}}$, we define $]z_{0}[$ by 
\begin{equation}
]z_{0}[:=\tau_{z_{0}}(]0[).
\label{translation of rigid point}
\end{equation}
Then $]z_{0}[$ is a residue disk containing $z_{0}$.\\
\quad Let $U:=E(\mathbb{C}_{p})\setminus [0]$, where $[0]$ is a unit element in group laws of the elliptic curve.
If $t_{z_0}$ is a local parameter of $E$ at a point $z_0$, by the formula (\ref{BKT Proposition 2.16}),
$\widehat{F}_{z_{0},b}(t)$ defines an element in $A(]z_{0}[)$ via $]z_{0}[\cong \{t_{z_0}\in \mathbb{C}_{p}|\ |t_{z_0}|<1  \}$.
\begin{Lem}
We define $F_{1}^{\operatorname{col}}\in A_{\operatorname{loc}}(U)$ by 
$$
F_{1}^{\operatorname{col}}(z)|_{]z_{0}[}:=\widehat{F}_{z_{0},1}(t)\in A(]z_{0}[)\subset A_{\log}(]z_{0}[)
$$
on each residue disk $]z_{0}[$, where $z_{0}\in E(\overline{\mathbb{Q}})$ is a torsion point with an order prime to $\mathfrak{p}$.
Then $F_{1}^{\operatorname{col}}$ is a Coleman function on $U$.
\end{Lem}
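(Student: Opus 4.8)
The plan is to check the two conditions in the characterization of Coleman functions recalled in the summary after the definition of $M(U)$ (following \cite{Bes0}): that $dF_1^{\operatorname{col}}$ is a Coleman one-form, and that $P(\phi^*)F_1^{\operatorname{col}}\in M(U)$ for a polynomial $P$ none of whose roots is a root of unity. First, however, I would confirm that $F_1^{\operatorname{col}}$ is a well-defined element of $A_{\operatorname{loc}}(U)=\prod_{x}A_{\log}(]x[)$. Each residue disk contains a unique Teichm\"uller representative, which is a torsion point $z_0$ of order prime to $\mathfrak p$; on such a disk $\widehat F_{z_0,1}(t)$ converges and, since $\delta_{z_0,1}=0$ for $z_0\notin\Gamma$, defines an element of $A(]z_0[)\subset A_{\log}(]z_0[)$ by (\ref{BKT Proposition 2.16}). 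On the disk $]0[$ attached to the removed point $[0]$ the Teichm\"uller representative is $0$ and $\widehat F_{0,1}(t)=\widehat F_1(t)=t^{-1}+\cdots$ converges on $0<|t|<1$, so it lies in $A_{\log}(]0[)$. Hence $F_1^{\operatorname{col}}\in A_{\operatorname{loc}}(U)$.

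For the differential, I would first record the identity $F_{z_0,1}(z)=F_1(z+z_0)-\overline{z_0}/A$, obtained by extracting the $w^0$-coefficient of $\Theta_{z_0,0}(z,w)$ from (\ref{Laurent expansion of Kronecker theta function}). Since on $]z_0[$ the global coordinate is $z=z_0+\lambda(t)$, this shows that $F_1^{\operatorname{col}}|_{]z_0[}$ equals the quasi-periodic function $F_1=\theta'/\theta$ up to the disk-dependent additive constant $-\overline{z_0}/A$. These constants are annihilated by $d$, so the disk-wise differentials glue to the single global form $F_1'(z)\,dz=(c-\wp(z))\,\omega=(c-x)\,\omega$ for some constant $c$. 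Because $x=\wp(z)$ is regular on $U=E(\C_p)\setminus[0]$, this is an algebraic second-kind differential in $\Omega^1(U)$, whence $dF_1^{\operatorname{col}}\in\Omega^1(U)\subset M(U)\otimes_{A(U)}\Omega^1(U)$; this settles the first condition.

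The Frobenius condition carries the real content. I would compute $\phi^*F_1^{\operatorname{col}}=[\pi]^*F_1^{\operatorname{col}}$ from the distribution relation $\sum_{z_\pi\in E[\pi]}F_{z_0+z_\pi,1}(z)=\pi\,F_{\pi z_0,1}(\pi z)$ of (\cite{BFK} Proposition 2.15), combined with the translation formula of Proposition \ref{translation of generating function}, which identifies each summand $F_{z_0+z_\pi,1}$ with the translate $\widehat F_{z_0,1}(t\oplus t_\pi)$ for $t_\pi\in\widehat E[\pi]$ (note that, under good ordinary reduction, the nonzero points of $E[\pi]$ reduce to $0$, so $z_0+z_\pi$ remains in $]z_0[$). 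Writing $dF_1^{\operatorname{col}}=c\,\omega-\eta$ with $\eta=\wp\,\omega$, I would then use $[\pi]^*\omega=\pi\omega$ together with $[\pi]^*\eta\equiv\overline{\pi}\,\eta\pmod{dA(U)}$; the latter follows from the CM structure, under which the two eigenlines of $H^1_{dR}(E)$ carry the eigenvalues $\pi$ and $\overline{\pi}$ with $\pi\overline{\pi}=p=\deg[\pi]$. A direct computation then gives $d\big[P(\phi^*)F_1^{\operatorname{col}}\big]\in dA(U)$ for $P(x)=(x-\pi)(x-\overline{\pi})$, so that $P(\phi^*)F_1^{\operatorname{col}}$ differs from an element of $A(U)$ by a class in $\ker d$. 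Since $|\pi|<1$, neither root of $P$ is a root of unity.

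Feeding $f=F_1^{\operatorname{col}}$ and this $P$ into the criterion then yields $F_1^{\operatorname{col}}\in M(U)$, i.e.\ $F_1^{\operatorname{col}}$ is a Coleman function. The main obstacle is precisely the Frobenius step: one must control the overconvergent remainder produced by $[\pi]^*\eta$ and, crucially, verify that after applying $P(\phi^*)$ the residual element of $\ker d=\prod_x\C_p$ collapses into the constants $\C_p$ (equivalently into $A(U)$) rather than remaining genuinely locally constant. The convergence and differential steps are routine given (\ref{BKT Proposition 2.16}) and (\ref{Laurent expansion of Kronecker theta function}); the eigenvalue bookkeeping and the collapse of the locally-constant ambiguity for Frobenius are the delicate points.
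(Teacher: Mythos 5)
First, a remark on the comparison itself: the paper does not actually prove this lemma --- its ``proof'' is the citation to (\cite{BFK} Lemma 3.4) --- so your attempt must be measured against the argument there rather than against anything in this text. Your overall strategy is the correct one (check membership in $A_{\operatorname{loc}}(U)$, then invoke the criterion ``$df$ algebraic and $P(\phi^{*})f\in M(U)$ with no root of $P$ a root of unity''), and your first two steps are sound: the convergence statement (\ref{BKT Proposition 2.16}) gives $F_{1}^{\operatorname{col}}\in A_{\operatorname{loc}}(U)$, and the identity $F_{z_{0},1}(z)=F_{1}(z+z_{0})-\overline{z_{0}}/A$ extracted from (\ref{Laurent expansion of Kronecker theta function}) shows that the disk-wise differentials glue to the single rational second-kind form $-(\wp+e_{2}^{*})\omega\in\Omega^{1}(U)$.

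The genuine gap is exactly where you flag it. With $P(T)=(T-\pi)(T-\overline{\pi})$ and an argument carried out only on differentials in $H^{1}_{dR}$, you obtain $d\bigl[P(\phi^{*})F_{1}^{\operatorname{col}}\bigr]\in dA(U)$, which determines $P(\phi^{*})F_{1}^{\operatorname{col}}$ only up to an element of $\ker d=\prod_{x}\C_{p}$; nothing in the eigenvalue bookkeeping forces this locally constant ambiguity to collapse, and the criterion needs $P(\phi^{*})F_{1}^{\operatorname{col}}\in M(U)$ itself, not merely exactness of its differential. (Also, $[\pi]^{*}\eta\equiv\overline{\pi}\eta$ holds in $H^{1}_{dR}$ only modulo a multiple of $\omega$ in addition to $dA(U)$; this is harmless since the factor $(\phi^{*}-\pi)$ annihilates $\omega$, but it should be stated.) The gap is closed by working at the level of functions, and you already hold the needed identity: combining $F_{z_{0},1}(z)=F_{1}(z+z_{0})-\overline{z_{0}}/A$ with the quasi-periodicity $F_{1}(z+\gamma)=F_{1}(z)+\overline{\gamma}/A$ for $\gamma\in\Gamma$ gives, on every residue disk $]z_{0}[$,
\begin{equation*}
\bigl(\phi^{*}F_{1}^{\operatorname{col}}-\overline{\pi}\,F_{1}^{\operatorname{col}}\bigr)\big|_{]z_{0}[}
=F_{\pi z_{0},1}(\pi z)-\overline{\pi}\,F_{z_{0},1}(z)
=G(z+z_{0}),\qquad G(z):=F_{1}(\pi z)-\overline{\pi}\,F_{1}(z),
\end{equation*}
and the same quasi-periodicity shows $G$ is elliptic, hence a rational function on $E$ with simple poles only on $E[\pi]$; since those poles lie in $]0[$ at distance at most $|p|^{1/(p-1)}<1$ from the origin, $G\in A(U)$. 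Thus $(\phi^{*}-\overline{\pi})F_{1}^{\operatorname{col}}\in A(U)$ exactly, with the simpler degree-one polynomial $P(T)=T-\overline{\pi}$ (whose root is not a root of unity), which is in substance the argument of \cite{BFK}. Note finally that the distribution and translation relations you invoke yield the pushforward identity $[\pi]_{*}F_{1}^{\operatorname{col}}=\pi F_{1}^{\operatorname{col}}$, not the pullback identity required by the criterion, so by themselves they do not compute $\phi^{*}F_{1}^{\operatorname{col}}$.
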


\begin{proof}
See (\cite{BFK} Lemma 3.4.)
\end{proof}

The formula (\ref{expansion of $L_{n,z_0}$ by $F_{b}(z)$}) indicates that
$L_{n}$ is a rational function on $E$ with poles only at $[0]$ in $E$, hence
is in particular a Coleman function on $U$. The set of Coleman functions is
a ring, and we define $F_{b}^{\operatorname{col}}$ as follows.\\
\quad For $b\geq 1$, we define $F_{b}^{\operatorname{col}}$ by
if $b=1$ $F_{1}^{\operatorname{col}}(z)|_{]z_0[}:=\widehat{F}_{z_0,1}(t)$, and if $b>1$ 
$$
F_{b}^{\operatorname{col}}:=\sum_{n=0}^{b}\frac{(F_{1}^{\operatorname{col}})^{b-n}}{(b-n)!}L_{n}.
$$

According to (\cite{BFK} Proposition 3.6), $F_{b}^{\operatorname{col}}$ interpolates $F_{z_0,b}(z)$ on each unit open disk as follows:
For a torsion point $z_0\in E(\overline{\Q})_{\operatorname{tors}}$ with an order prime to $\mathfrak{p}$, we have
$$
F_{b}^{\operatorname{col}}(z)|_{]z_0[}=\widehat{F}_{z_0, b}(t)\in A_{\log}(]z_0[).
$$
In addition, $F_{b}^{\operatorname{col}}$ has the following distribution relation by (\cite{BFK} Proposition 3.7): 
\begin{equation}
\sum_{z_\alpha \in E[\alpha]}F_{b}^{\operatorname{col}}(z+z_\alpha)=\alpha\overline{\alpha}^{1-b}F_{b}^{\operatorname{col}}(\alpha z)
\qquad \text{for all}\ 0\neq \alpha \in \mathcal{O}_{K}.
\label{distribution relation of Coleman function of generating function}
\end{equation}
\quad In (\cite{BFK} \S3.3), K. Bannai, H. Furusho, and S. Kobayashi constructed the $p$-adic Eisenstein-Kroneker series as the Coleman function by using $F_{b}^{\operatorname{col}}$ whose constant term is chosen to satisfy the distribution relation.
The distribution relation plays an important role to resolve the ambiguity of integration constants.
\begin{Def}(Coleman Eisenstein-Kronecker series)
Let $m,b$ be integers with $b\geq 0$. We define 
the \textbf{Coleman Eisenstein-Kronecker series} $E_{m,b}^{\operatorname{col}}$ on $U:=E(\C_{p})\setminus [0]$ recursively as follows:
\begin{itemize}
\item [i)] $E_{0,b}^{\operatorname{col}}:=(-1)^{b-1}F_{b}^{\operatorname{col}}$.\\
This function satisfies the distribution relation by the formula (\ref{distribution relation of Coleman function of generating function}).
\item [ii)] If $m>0$, we define $E_{m,b}^{\operatorname{col}}$ by the Coleman function 
$$
E_{m,b}^{\operatorname{col}}:=-\int E_{m-1,b}^{\operatorname{col}}\omega
$$
with a constant term normalized by satisfying the distribution relation
\begin{equation}
\sum_{z_{\alpha}\in E[\alpha]}E_{m,b}^{\operatorname{col}}(z+z_{\alpha})=\alpha^{1-m}\overline{\alpha}^{1-b}E_{m,b}^{\operatorname{col}}(\alpha z)
\qquad \text{for any}\ 0\neq \alpha\in \mathcal{O}_{K}.
\label{distribution relation of p-adic Eisenstein-Kronecker series}
\end{equation}
\item [iii)] If $m<0$, we define $E_{m,b}^{\operatorname{col}}$ by 
$$
dE_{m+1,b}^{\operatorname{col}}:=-E_{m,b}^{\operatorname{col}}\omega,
$$
\end{itemize}
where $\omega$ is the invariant differential of the elliptic curve.
\label{Definition of p-adic Eisenstein-Kronecker series}
\end{Def}
There exists uniquely such a Coleman function $E_{m,b}^{\operatorname{col}}$ on $U$ defined
by the iterated integration $E_{m+1,b}^{\operatorname{col}}:=-\int E_{m,b}^{\operatorname{col}}\omega$
satisfying the distribution relation
$$
\sum_{z_{\alpha}\in E[\alpha]}E_{m+1,b}^{\operatorname{col}}(z+z_{\alpha})=\alpha^{-m}\overline{\alpha}^{1-b}E_{m+1,b}^{\operatorname{col}}(\alpha z)
\qquad \text{for any}\ 0\neq \alpha\in \mathcal{O}_{K}.
$$
The existence and uniqueness are insured by (\cite{BFK} Proposition 3.9).
\begin{Rmk}
The distribution relation (\ref{distribution relation of p-adic Eisenstein-Kronecker series}) is the $p$-adic analogue of 
the distribution relation of the classical complex Eisenstein-Kronecker series
$$
\sum_{z_{\alpha}\in E[\alpha]}E_{m,b}(z+z_{\alpha})=\alpha^{1-m}\overline{\alpha}^{1-b}E_{m,b}(\alpha z)
\qquad \text{for any}\ 0\neq \alpha\in \mathcal{O}_{K}.
$$
We can show this by using the following orthogonality of character
$$
\sum_{z_{\alpha}\in E[\alpha]} \exp\left(\frac{z_{\alpha}\overline{\gamma}-\overline{z_{\alpha}}\gamma}{A}\right)=
\begin{cases}
N(\alpha)(=\alpha\overline{\alpha}) & \text{if}\ \gamma\in \overline{\alpha}\Gamma\\
0 & \text{if}\ \gamma\not\in \overline{\alpha}\Gamma
\end{cases}
$$
\end{Rmk}
The construction of $p$-adic Eisenstein-Kronecker series in Definition \ref{Definition of p-adic Eisenstein-Kronecker series}
allows us to choose constant term when $m>0$.
By the convergence property of $F_{1}$ in (\ref{BKT Proposition 2.16}), $E_{m,1}^{\operatorname{col}}$ is defined at any point in $U:=E(\C_{p})\setminus[0]$,
and if $b>1$ then $E_{m,b}^{\operatorname{col}}$ is defined on $E(\mathbb{C}_{p})$.
When $b=0$, since $F_0=1$ and the definition of  $E_{0,b}^{\operatorname{col}}$, we have $E_{0,0}^{\operatorname{col}}=-F_{0}^{\operatorname{col}}=-1$.
This show that we have $E_{a,0}^{\operatorname{col}}=0$ for $a<0$.
Note that the values of $E_{m, b}^{\operatorname{col}} (z)$ are independent of the choice of the branch of the $p$-adic logarithm (see \cite{BFK} Lemma 3.12).\\
\quad According to (\cite{BFK} Proposition 3.11), the $p$-adic Eisenstein-Kronecker series $E_{m,b}^{\operatorname{col}}$ interpolates the classical complex
Eisenstein-Kronecker series $E_{m,b}$ for $m\leq 0$.\\
\quad For $m,b$ be integers with $b\geq 0$, we define
\begin{equation}
E_{m,b}^{(p)}(z):=E_{m,b}^{\operatorname{col}}(z)-\frac{1}{\pi^{m}\overline{\pi}^{b}}E_{m,b}^{\operatorname{col}}(\pi z).
\label{twisted p-adic Eisenstein-Kronecker series as Coleman function}
\end{equation}
\section{Main result}
In previous results, the construction of the $p$-adic measure interpolating special values of algebraic Hecke characters was established by Manin and Vi\v{s}ik
\cite{MaVi}, N. Katz \cite{Katz2}, R. I. Yager \cite{Yag}, and de Shalit \cite{dS} etc.
In \cite{BKT}, when the conductor of an algebraic Hecke character is prime to $p$,  K. Bannai, S. Kobayashi, T. Tsuji
related $p$-adic Eisenstein-Kronecker numbers
and non-critical values of the $p$-adic $L$-function associated to algebraic Hecke characters
under the another construction of the measure with the Kronecker theta function.\\
\quad  In this paper, by the method of $p$-adic analogue as Coleman functions, 
we related $p$-adic Eisenstein-Kronecker series and the non-critical values of $p$-adic $L$-function 
of an imaginary quadratic field with class number 1
associated to the algebraic 
Hecke character whose conductor is divisible by $p$. 

\subsection{Preludes for main results}\leavevmode\\
We keep the notation in \S0.
The $p$-adic $L$-function interpolates the classical $L$-function at critical points in the meaning of Deligne \cite{De}.
Let $\varphi: I(\mathfrak{g})\to \overline{K}^{\times}$ be an algebraic Hecke character of infinite type $(m,n)\in \Z^{2}$ whose conductor divides $\mathfrak{g}$
and $\varphi_{p}:\X\to \C_{p}^{\times}$ be a $p$-adic character. We fix a pair
$(\Omega, \Omega_{\mathfrak{p}})\in \C^{\times}\times \mathcal{O}_{\C_{p}}^{\times}$ of a complex period and a $p$-adic period.
There exists a $p$-adic function $L_{p}(\varphi_{p})$ such that 
$$
\frac{L_{p}(\varphi_{p})}{\Omega_{\mathfrak{p}}^{n-m}}=(-m-1)!\left(\frac{2\pi}{\sqrt{d_K}}\right)^n \left(1-\frac{\varphi^{-1}(\mathfrak{p})}{p}\right)
(1-\varphi(\overline{\mathfrak{p}}))\frac{L_{\mathfrak{g}}(0,\varphi)}{\Omega^{n-m}}
$$
for $m<0$ and $n\geq 0$,
both of which lies in $\overline{\Q}$ (see \cite{BKT} Proposition 2.26). We call $L_{p}(\varphi_{p})$ the \textbf{$p$-adic $L$-function}
 at the $p$-adic character $\varphi_{p}$.\\
\quad We give the construction of this $p$-adic $L$-function along \cite{BKT} \S2.4.
If $\widehat{E}$ is a formal group associated to $E\otimes_{\mathcal{O}_{K}}\mathcal{O}_{K_{\mathfrak{p}}}$
for $t=-2x/y$, $\widehat{E}$ is the Lubin-Tate formal group over $\mathcal{O}_{K_{\mathfrak{p}}}$.
Let a formal group law of $\widehat{E}$ be $\oplus$.
Let $\lambda: \widehat{E}\overset{\sim}{\longrightarrow} \widehat{\mathbb{G}}_{a}$ be a normalized formal logarithm by $\lambda'(0)=1$.
We have $\mathcal{O}_{K_{\mathfrak{p}}}$-linear isomorphisms
\begin{equation}
\operatorname{Hom}_{\mathcal{O}_{\mathbb{C}_{p}}}(\widehat{E}, \widehat{\G}_{m})\cong 
\operatorname{Hom}_{\Z_{p}}(T_{p}(\widehat{E}), T_{p}(\widehat{\G}_{m})) 
\overset{\cong}{\longleftarrow}\mathcal{O}_{K_{\mathfrak{p}}},
\label{isomorphism of Tate module}
\end{equation}
the first isomorphism holds by (\cite{Ta2} \S4.2 Corollary 1) and the second isomorphism holds 
since 
$
\operatorname{Hom}_{\Z_{p}}(T_{p}(\widehat{E}), T_{p}(\widehat{\G}_{m}))
$
is a free $\mathcal{O}_{K_{\mathfrak{p}}}$-module of rank 1 by
(\cite{Ta2} \S4.2 Proposition 12),
where 
$T_{p}(\widehat{E}):=\varprojlim_{n}\widehat{E}[p^n]$ (resp. $T_{p}(\widehat{\G}_{m}):=\varprojlim_{n}\widehat{\G}_{m}[p^n]$) is a Tate module
and $\widehat{\G}_{m}$ is a multiplicative formal group. $T_{p}(\widehat{\G}_{m})$ is a $\Z_{p}$-module of rank 1 (see \cite{Ta2} \S2.1 Examples (b)).
Then there exists a homomorphism $\eta_{\mathfrak{p}}: \widehat{E}\to \widehat{\G}_{m}$ such that if $t\in \widehat{E}(\mathfrak{m}_{\C_{p}})[\pi]$ 
then $1+\eta_{\mathfrak{p}}(t)$
is a $p$-th power root of 1 and that the image of $ \widehat{E}(\mathfrak{m}_{\C_{p}})[\pi]$ does not contain power roots of 1. 
If $\eta_{\mathfrak{p}}(t)=\Omega_{\mathfrak{p}}^{-1}t+\cdots \in \mathcal{O}_{\C_{p}}[[t]]$, by the uniqueness of  the formal logarithm $\lambda$,
we have the commutative diagram:
$$
\xymatrix@d@C=15mm@R=30mm{
\widehat{E}\ar@{->}[r]_{\lambda}^{\cong} \ar@{->}[dr]^{}_{\eta_{\mathfrak{p}}}
&\widehat{\mathbb{G}}_{a}\\
&\widehat{\mathbb{G}}_{m}\ar@{->}[u]^{\cong}_{w\mapsto\Omega_{\mathfrak{p}}\log(1+w)} 
}
$$
where the isomorphism $\widehat{\G}_{a}\overset{\sim}{\longrightarrow}\widehat{\G}_{m}$ follows from that a characteristic of $\mathcal{O}_{\C_{p}}$ is 0.\\
\quad Since $E$ is ordinary, $\widehat{E}$ has a height 1 (\cite{Sil1} V. Theorem 3.1 (b)). Then 
$\eta_{\mathfrak{p}}: \widehat{E}\to \widehat{\G}_{m}$ is isomorphism (see \cite{Lu} \S4. Corollary 4.3.3.).
Then we can take the isomorphism $\eta_{\mathfrak{p}}: \widehat{E}\overset{\cong}{\longrightarrow} \widehat{\G}_{m}$ over $\mathcal{O}_{\C_{p}}$ by
\begin{equation}
\eta_{\mathfrak{p}}(t)=\exp\left(\frac{\lambda(t)}{\Omega_{\mathfrak{p}}}\right)-1.
\label{isomorphism between formal groups}
\end{equation}
We call $\Omega_{\mathfrak{p}}\in \mathcal{O}_{\C_{p}}^\times$ a \textbf{$p$-adic period}, which is regarded as the $p$-adic analogue of the complex period $\Omega$.
The second isomorphism of (\ref{isomorphism of Tate module}) is given by associating to any $x\in \mathcal{O}_{K_{\mathfrak{p}}}$ the homomorphism 
of formal groups defined by $\exp(x\lambda(s)/\Omega_{\mathfrak{p}})$, and depends on the choice of $\Omega_{\mathfrak{p}}$.
In addition, since $E$ is ordinary, we have $(p)=\mathfrak{p}\overline{\mathfrak{p}}$. 
Therefore we have $\mathcal{O}_{K_{\mathfrak{p}}}\cong \Z_{p}$.\\
\quad Now we fix a natural number $N$. Let $s_{N}\in \widehat{E}(\mathfrak{m}_{\mathbb{C}_{p}})[\pi^{N}]$ be any $\pi^N$-torsion point.
Let $z_{N}$ be the image of $s_{N}$ by an inclusion map
$\widehat{E}(\mathfrak{m}_{\mathbb{C}_{p}})[\pi^{N}]\hookrightarrow E(\overline{\mathbb{Q}})\hookrightarrow E(\mathbb{C})\cong \mathbb{C}/\Gamma$.
Then $z_{N}$ is a $\pi^N$-torsion point in $E(\mathbb{C})[\pi^N]$.
Now we put $\displaystyle{z_{N}:=\Omega/\pi^N\in \pi^{-N}\Gamma/\Gamma\cong E(\mathbb{C})[\pi^N]}$.
Then we can take an isomorphism 
$\eta_{\mathfrak{p}}: \widehat{E}(\mathfrak{m}_{\mathbb{C}_{p}})[\pi^N]\overset{\sim}{\to} \widehat{\mathbb{G}}_{m}(\mathfrak{m}_{\C_{p}})[p^N]$ 
of formal groups over $\mathcal{O}_{\C_{p}}$ satisfying 
\begin{equation}
1+\eta_{\mathfrak{p}}(s_{N})=\chi_{z_N}(\Omega)
:=\exp\left(\frac{\Omega \overline{z}_{N}-\overline{\Omega}z_{N}}{A(\Gamma)} \right).
\label{choice of isomorphism of Tate module}
\end{equation}
This $\eta_{\mathfrak{p}}$ is induced by $\eta_{\mathfrak{p}}(t)=\exp(\lambda(t)/\Omega_{\mathfrak{p}})-1$.\\
\quad Let $W$ be the integer ring of the completion field at $p$ of a maximal unramified extension of $\mathbb{Q}_{p}$.
 According to (\cite{BK} Corollary 2.18),
the Kronecker theta function has the property of $p$-adic integrality as follows:
For torsion points $z_0,w_0\in E(\overline{K})_{\operatorname{tors}}$ with orders prime to $\mathfrak{p}$, 
we have
$$
\widehat{\Theta}_{z_0,w_0}(s,t):=\Theta_{z_0,w_0}(z,w)|_{z=\lambda(s),w=\lambda(t)}\in W[s^{-1},t^{-1}][[s,t]].
$$
In particular, 
$$
\widehat{\Theta}^{*}_{z_0,w_0}(s,t):=\widehat{\Theta}_{z_0,w_0}(s,t)-\chi_{z_0}(w_0)\delta_{z_0}s^{-1}-\delta_{w_0}t^{-1}\in W[[s,t]],
$$
where $\delta_{x}=1$ if $x\in \Gamma$ and $\delta_{x}=0$ otherwise.
Therefore for non-zero torsion points $z_0,w_0\in E(\overline{K})_{\operatorname{tors}}$ with orders prime to $\mathfrak{p}$,
we can characterize a $p$-adic measure $\mu_{z_0,w_0}: \mathbb{Z}_{p}\times \mathbb{Z}_{p}\to W$ by
\begin{equation}
\int_{\mathbb{Z}_{p}\times \mathbb{Z}_{p}}(1+s)^x (1+t)^y d\mu_{z_0,w_0}(x,y)=\widehat{\Theta}_{z_0,w_0}^{*}(s,t).
\label{construction of two variable p-adic measure of Kronecker theta function}
\end{equation}
In particular, if $z_0,w_0\not\in \Gamma$, we have $\widehat{\Theta}_{z_0,w_0}^{*}(s,t)=\widehat{\Theta}_{z_0,w_0}(s,t)$.
The existence and uniqueness of this $p$-adic measure $\mu_{z_0,w_0}: \mathbb{Z}_{p}\times \mathbb{Z}_{p}\to W$ follows from
$\widehat{\Theta}^{*}_{z_0,w_0}(s,t)\in W[[s,t]]$ (see \cite{Yag} \S6).
By the isomorphism
$\eta_{\mathfrak{p}}:\ \widehat{E}\overset{\sim}{\longrightarrow}\widehat{\G}_{m}$, 
we can rewrite the formula (\ref{construction of two variable p-adic measure of Kronecker theta function}) as
\begin{equation}
\int_{\mathbb{Z}_{p}\times \mathbb{Z}_{p}}\exp\left(\frac{x\lambda(s)}{\Omega_{\mathfrak{p}}}\right)
\exp\left(\frac{y\lambda(t)}{\Omega_{\mathfrak{p}}}\right)d\mu_{z_0,w_0}(x,y)=
\widehat{\Theta}_{z_0,w_0}^{*}(s,t).
\label{the measure by Kronecker theta function}
\end{equation}
\medskip
For a non-zero torsion point $z_{0}\in E(\overline{K})_{\operatorname{tors}}$ with an order prime to $p$ and $g$ as above,
 we define a variant measure $\mu_{z_{0}, 0}^{(g)}$ on $\Z_{p}\times \Z_{p}$ of the measure $\mu_{z_0, 0}$ in (\ref{the measure by Kronecker theta function})
 by the formula
\begin{equation}
\int_{\Z_{p}\times\Z_{p}}\exp\left(\frac{x\lambda(s)}{\Omega_{\mathfrak{p}}}\right)
\exp\left(\frac{y\lambda(t)}{\Omega_{\mathfrak{p}}}\right)d\mu_{z_0, 0}^{(g)}(x,y)=
\widehat{\Theta}_{z_0, 0}^{*}([g^{-1}]s, [\overline{g}]t)
\label{twisted of g of the measure by Kronecker theta function}
\end{equation}
\quad For $\alpha_0\in (\mathcal{O}_{K}/\mathfrak{g})^{\times}$, 
let $\alpha_{0}\Omega/g \in \mathfrak{g}^{-1}\Gamma/\Gamma\cong E(\C)[\mathfrak{g}]$ be a primitive $\mathfrak{g}$-torsion point 
and $\mu_{\alpha_{0}\Omega/g, 0}^{(g)}$
induces a measure on $(\mathcal{O}_{K}\otimes_{\Z}\Z_{p})^{\times}$ through the isomorphism (\ref{the projections to the first and second factors}).
Similarly to (\cite{BKT} \S2.4), for a continuous function $f: \X \longrightarrow \C_{p}$, 
we define the measure $\mu_{g}$ on $\X$ by
\begin{equation}
\int_{\X}f(\alpha)d\mu_{g}(\alpha):=g^{-1}\Omega_{\mathfrak{p}}\sum_{\alpha_{0}\in (\mathcal{O}_{K}/\mathfrak{g})^{\times}}
\int_{(\mathcal{O}_{K}\otimes_{\Z}\Z_{p})^{\times}}f(\alpha_{0}\alpha)\kappa_{1}(\alpha)d\mu_{\alpha_{0}\Omega/g,0}^{(g)}(\alpha).
\label{definition of the measure on X}
\end{equation}
\begin{Def}[$p$-adic $L$-function]
Let $\mathfrak{f}$ be an integral ideal of $\mathcal{O}_{K}$ prime to $p$.
The $p$-adic $L$-function of $K$ is the function whose domain is the set of all $p$-adic continuous characters $\phi_{p}: \X \to \C_{p}^{\times}$ on $\X$,
and is defined at the character $\phi_{p}$ by 
\begin{equation}
L_{p}(\phi_{p}):=\int_{\X}\phi_{p}(\alpha)d\mu_{g}(\alpha).
\label{definition of p-adic L-function}
\end{equation}
Here the measure $\mu_{g}$ corresponds to the measure denoted by $\mu$ in (\cite{dS} Theorem 4.14) 
through the canonical isomorphism $\operatorname{Gal}(K(\mathfrak{g}p^{\infty})/K)\cong \X$.
\end{Def}
By using the formula (\ref{definition of the measure on X}), we can rewrite the $p$-adic $L$-function (\ref{definition of p-adic L-function}) 
at the $p$-adic character $\varphi_{p}$ in \S0 as
\begin{equation}
L_{p}(\varphi_{p})=g^{-1}\Omega_{\mathfrak{p}}\sum_{\alpha_{0}\in (\mathcal{O}_{K}/\mathfrak{g})^{\times}}\chi_{\mathfrak{g}}(\alpha_{0})
\int_{(\mathcal{O}_{K}\otimes_{\Z}\Z_{p})^{\times}}\chi_{1}(\alpha)\chi_{2}(\alpha)\kappa_{1}(\alpha)^{m+1}\kappa_{2}(\alpha)^{n}
d\mu_{\alpha_{0}\Omega/g, 0}^{(g)}(\alpha),
\label{redefinition of p-adic L-function 1}
\end{equation}
where $\chi_{1}, \chi_{2}$ in (\ref{redefinition of p-adic L-function 1}) are characters in \S0.
In addition by the following isomorphism
$$
\mathcal{O}_{K_{\mathfrak{p}}}^{\times}\times \mathcal{O}_{K_{\overline{\mathfrak{p}}}}^{\times}
\overset{\cong}{\longrightarrow}\Z_{p}^{\times}\times \Z_{p}^{\times}\quad
(\alpha_{1}, \alpha_{2})\overset{\sim}{\longmapsto}(\alpha_{1}^{-1}, \alpha_{2}),
$$
and putting $x:=\kappa_{1}(\alpha)^{-1}$ and $y:=\kappa_{2}(\alpha)$, we rewrite the formula (\ref{redefinition of p-adic L-function 1}) as 
\begin{equation}
L_{p}(\varphi_{p})=g^{-1}\Omega_{\mathfrak{p}}\sum_{\alpha_{0}\in (\mathcal{O}_{K}/\mathfrak{g})^{\times}}\chi_{\mathfrak{g}}(\alpha_{0})
\int_{\Z_{p}^{\times}\times \Z_{p}^{\times}}\chi_{1}(x)^{-1}\chi_{2}(y)x^{-m-1}y^{n}
d\mu_{\alpha_{0}\Omega/g, 0}^{(g)}(x,y).
\label{redefinition of p-adic L-function 2}
\end{equation}
\quad Since $p=\pi\bar{\pi}$ and $\chi_1$ (resp. $\chi_2$) is defined by extending to 0 at all values not prime to $\pi$ (resp. $\overline{\pi}$),
$\chi_1$ (resp. $\chi_2$)  is 0 on $p\mathbb{Z}_{p}$.
Therefore if we replace the integration domain of (\ref{redefinition of p-adic L-function 2}) by 
$\mathbb{Z}_{p}\times\mathbb{Z}_{p}$, $\mathbb{Z}_{p}^{\times}\times\mathbb{Z}_{p}$, $\mathbb{Z}_{p}\times\mathbb{Z}_{p}^{\times}$, or
$\mathbb{Z}_{p}^{\times}\times\mathbb{Z}_{p}^{\times}$, all integrations of 
the right side of the formula (\ref{redefinition of p-adic L-function 2}) have the same value on 
$\mathbb{Z}_{p}\times\mathbb{Z}_{p}$, $\mathbb{Z}_{p}^{\times}\times\mathbb{Z}_{p}$, $\mathbb{Z}_{p}\times\mathbb{Z}_{p}^{\times}$, or
$\mathbb{Z}_{p}^{\times}\times\mathbb{Z}_{p}^{\times}$.
\begin{Def}[Gauss sum]\leavevmode
\begin{itemize}
\item[a)]
For $x\in \mathcal{O}_{K}/(\pi^N)$, we define \textbf{the Gauss sum} on $\mathcal{O}_{K}/(\pi^N)$ by
$$
\tau(\chi_1,x):=\sum_{u\in (\mathcal{O}_{K}/(\pi^N))^{\times}}\overline{\chi_{1}(u)}\langle \Omega, xuz_N\rangle
=\sum_{u\in (\mathcal{O}_{K}/(\pi^N))^{\times}}\overline{\chi_{1}(u)}\exp\left(\frac{\Omega \overline{xuz_{N}}-\overline{\Omega}xuz_{N}}{A}\right),
$$
where $z_N:=\Omega/\pi^N\in \pi^{-N}\Gamma/\Gamma\cong E(\mathbb{C})[\pi^N]$ is a $\pi^N$-torsion point.
\item[b)]
For $y\in \mathcal{O}_{K}/(\bar{\pi}^N)$, we define the Gauss sum on $\mathcal{O}_{K}/(\bar{\pi}^N)$ by
$$
\tau(\chi_2,y):=\sum_{v\in (\mathcal{O}_{K}/(\bar{\pi}^N))^{\times}}\overline{\chi_{2}(v)}\langle \Omega, \bar{y}\bar{v}w_N\rangle
=\sum_{v\in (\mathcal{O}_{K}/(\bar{\pi}^N))^{\times}}\overline{\chi_{2}(v)}\exp\left(\frac{\Omega \overline{\bar{y}\bar{v}w_{N}}
-\overline{\Omega}\bar{y}\bar{v}w_{N}}{A}\right),
$$
where $w_N:=\Omega/\pi^N\in \pi^{-N}\Gamma/\Gamma\cong E(\mathbb{C})[\pi^N]$ is a $\pi^N$-torsion point.
\end{itemize}
\label{Definition of Gauss sum}
\end{Def}

\begin{Lem}[Properties of Gauss sum] Under the assumption of Definition \ref{Definition of Gauss sum}, we have
\begin{itemize}
\item[a)]
$\tau(\chi_1,x)=\chi_1(x)\tau(\chi_1)$, where $\tau(\chi_1):=\tau(\chi_1,1)$.
In addition, if $x$ is not prime to $\pi$, we have $\tau(\chi_{1}, x)=0$, and 
if $x$ is prime to $\pi$,  we have $|\tau(\chi_{1},x)|=\sqrt{N(\mathfrak{p}^N)}=\sqrt{p^N}$.
\item[b)]
$\tau(\chi_2,y)=\chi_2(y)\tau(\chi_2)$, where $\tau(\chi_2):=\tau(\chi_2,1)$.
In addition, if $y$ is not prime to $\bar{\pi}$, we have $\tau(\chi_{2}, y)=0$ and 
if $y$ is prime to $\bar{\pi}$, we have $|\tau(\chi_{2},y)|=\sqrt{N(\overline{\mathfrak{p}}^N)}=\sqrt{p^N}$.
\end{itemize}
\label{Properties of Gauss sum}
\end{Lem}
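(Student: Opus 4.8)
The plan is to handle parts (a) and (b) by the classical Gauss-sum argument, and since (b) is obtained from (a) by exchanging the roles of $\pi$ and $\overline{\pi}$ and passing to complex conjugates, I concentrate on (a). First I would record the formal properties of the pairing $\langle \Omega, w\rangle := \exp((\Omega\overline{w}-\overline{\Omega}w)/A)$ that let it play the role of a standard additive character: it is additive in $w$ (exponents add), its complex absolute value is $1$ with $\overline{\langle \Omega, w\rangle}=\langle \Omega, w\rangle^{-1}$ (the exponent is purely imaginary and $A$ is real), and, crucially, $c\mapsto \langle \Omega, cz_{N}\rangle$ is a \emph{faithful} character of the cyclic group $\mathcal{O}_{K}/(\pi^{N})\cong \Z/p^{N}$. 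The faithfulness is exactly the content of (\ref{choice of isomorphism of Tate module}): $\langle \Omega, z_{N}\rangle=\chi_{z_{N}}(\Omega)=1+\eta_{\mathfrak{p}}(s_{N})$ is a primitive $p^{N}$-th root of unity, so the character carries a generator to an element of order $p^{N}$. From faithfulness I get the orthogonality relation that $\sum_{x\in \mathcal{O}_{K}/(\pi^{N})}\langle \Omega, x\beta z_{N}\rangle$ equals $p^{N}$ when $\pi^{N}\mid \beta$ and $0$ otherwise.

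The multiplicativity $\tau(\chi_{1},x)=\chi_{1}(x)\tau(\chi_{1})$ for $x$ prime to $\pi$ is then immediate: such an $x$ is a unit modulo $\pi^{N}$, and the substitution $u\mapsto x^{-1}u$ in the defining sum together with $\overline{\chi_{1}(x^{-1})}=\chi_{1}(x)$ (valid since $|\chi_{1}|=1$) gives the claim. The step I expect to be the main obstacle is the vanishing of $\tau(\chi_{1},x)$ when $\pi\mid x$, for this is where primitivity of $\chi_{1}$ modulo $\pi^{N}$ (conductor exactly $\mathfrak{p}^{N}$) is indispensable. Writing $x=\pi^{j}x_{0}$ with $x_{0}$ prime to $\pi$: if $j\geq N$ then $xuz_{N}\in \Gamma$, so the pairing is trivial and the sum collapses to $\sum_{u}\overline{\chi_{1}(u)}=0$ by nontriviality of $\chi_{1}$. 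If $1\leq j<N$, then $\pi^{j}z_{N}=\Omega/\pi^{N-j}$ has order $\pi^{N-j}$, so $\langle \Omega, xuz_{N}\rangle$ depends on $u$ only modulo $\pi^{N-j}$; grouping the units $u$ into cosets of the multiplicative subgroup $H:=1+\pi^{N-j}\mathcal{O}_{K}\pmod{\pi^{N}}$, the pairing is constant on each coset while $\sum_{h\in H}\overline{\chi_{1}(h)}$ vanishes because $\chi_{1}|_{H}$ is a nontrivial character of $H$ — nontrivial precisely because the conductor of $\chi_{1}$ is $\mathfrak{p}^{N}$ and not the proper divisor $\mathfrak{p}^{N-j}$. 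Hence $\tau(\chi_{1},x)=0$.

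For the absolute value I would compute $\sum_{x\in \mathcal{O}_{K}/(\pi^{N})}|\tau(\chi_{1},x)|^{2}$ in two ways. Expanding the square, using $\overline{\langle \Omega, w\rangle}=\langle \Omega, w\rangle^{-1}$ and additivity to rewrite the product of pairings as $\langle \Omega, x(u-v)z_{N}\rangle$, and then applying the orthogonality of the first paragraph, collapses the sum over $u,v$ to the diagonal $u=v$ and yields $p^{N}\cdot\#(\mathcal{O}_{K}/(\pi^{N}))^{\times}$. On the other hand, by the multiplicativity and vanishing just proved, $|\tau(\chi_{1},x)|^{2}$ equals $|\tau(\chi_{1})|^{2}$ at each unit $x$ and $0$ at each non-unit, so the same sum equals $\#(\mathcal{O}_{K}/(\pi^{N}))^{\times}\cdot|\tau(\chi_{1})|^{2}$. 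Comparing the two expressions gives $|\tau(\chi_{1})|^{2}=p^{N}=N(\mathfrak{p}^{N})$, and therefore $|\tau(\chi_{1},x)|=\sqrt{p^{N}}$ for every $x$ prime to $\pi$. Finally, part (b) follows by the identical computation with $\langle \Omega, \cdot\rangle$ replaced by the conjugate pairing evaluated at $w_{N}$ and $\chi_{1}$ replaced by $\chi_{2}$, the modulus $\pi^{N}$ being replaced by $\overline{\pi}^{N}$; no new input is required.
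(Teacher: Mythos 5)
Your proof is correct and follows essentially the same route as the paper's: multiplicativity via the substitution $u\mapsto x^{-1}u$, vanishing at non-units from the primitivity of $\chi_{1}$ (you average over the subgroup $1+\pi^{N-j}\mathcal{O}_{K}$ where the paper instead picks a single element $a\equiv 1\ (\operatorname{mod}\ \pi^{N}/d)$ with $\chi_{1}(a)\neq 1$ --- the same idea), and the absolute value by computing $\sum_{x}|\tau(\chi_{1},x)|^{2}$ in two ways against orthogonality of the additive character. If anything your write-up is slightly tidier, since you keep the second-moment sum over all of $\mathcal{O}_{K}/(\pi^{N})$ throughout and you justify the faithfulness of $c\mapsto\langle\Omega,cz_{N}\rangle$ via (\ref{choice of isomorphism of Tate module}), two points the paper's proof leaves implicit.
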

\begin{proof}
For the proof of b), if we replace $y$ for $\bar{y}$ and $v$ for $\bar{v}$, we can reduce to a).
So we have only to prove a). \\
\quad If $x$ is prime to $\pi$, since $(\mathcal{O}_{K}/(\pi^N))^{\times}\to (\mathcal{O}_{K}/(\pi^N))^{\times}, u\mapsto xu$ is bijective
and $\chi_{1}^{-1}=\overline{\chi_{1}}$ because $\chi_{1}$ is a finite character, then we have
$$
\tau(\chi_{1}, x)=\chi_{1}(x)\tau(\chi_{1}).
$$
\quad Next we consider when $x$ is not prime to $\pi$. Let $d\neq 1$ be a greatest common divisor of $x$ and $\pi$.
Since $\chi_{1}$ is primitive, there exists $a\in (\mathcal{O}_{K}/(\pi^N))^{\times}$ such that 
$$
\chi_{1}(a)\neq 1 \quad \text{and}\quad  a\equiv 1\operatorname{mod} \frac{\pi^N}{d}.
$$
So we have $xa\equiv x\ (\operatorname{mod} \pi^N)$. Therefore we have
$$
\overline{\chi_{1}(a)}\tau(\chi_{1},x)=\tau(\chi_{1},x).
$$
Since $\chi_{1}(a)\neq 1$, we have $\tau(\chi_{1},x)=0$. On the other hand, since $\chi_{1}(x)=0$, we have
$\chi_{1}(x)\tau(\chi_{1})=0.$
Hence $\tau(\chi_{1},x)=\chi_{1}(x)\tau(\chi_{1})$.\\
\quad We show the latter part of a).
Since $\Gamma=\Omega\mathcal{O}_{K}$, we have $A:=A(\Gamma)=\sqrt{d_{K}}\Omega\overline{\Omega}/2\varpi$,
where $d_{K}$ is a discriminant of $K$. Then
\begin{align*}
\tau(\chi_{1},x)=\sum_{u\in (\mathcal{O}_{K}/(\pi^N))^{\times}}\overline{\chi_{1}(u)}\exp\left[
\frac{2\varpi}{\sqrt{d_{K}}}\left( \overline{\left(\frac{xu}{\pi^N} \right)}-\frac{xu}{\pi^N} \right)
\right].
\end{align*}
Now for $z\in \mathcal{O}_{K}/(\pi^N)$, we have
\begin{equation}
\tau(\chi_{1}, z)\overline{\tau(\chi_{1},z)}=\sum_{u,v\in  (\mathcal{O}_{K}/(\pi^N))^{\times}}
\overline{\chi_{1}(u)}\chi_{1}(v)\exp\left[ 
\frac{2\varpi}{\sqrt{d_{K}}}\left(\frac{\bar{z}}{\bar{\pi}^{N}}(\bar{u}-\bar{v})-\frac{z}{\pi^N}(u-v)\right)
\right].
\label{on the way to calculation of Gauss sum}
\end{equation}
If we take the sum of both sides of (\ref{on the way to calculation of Gauss sum}) over $z\in \mathcal{O}_{K}/(\pi^N)$,
since $\tau(\chi_{1},z)=0$ holds if $(z,\pi)\neq 1$, so
the sum over $z\in \mathcal{O}_{K}/(\pi^N)$ of the left side of (\ref{on the way to calculation of Gauss sum})
and the sum over $z\in (\mathcal{O}_{K}/(\pi^N))^{\times}$ have the same value. Then  
giving the sum over $z\in (\mathcal{O}_{K}/(\pi^N))^{\times}$ to the left side of (\ref{on the way to calculation of Gauss sum}), we have
$$
\sum_{z\in (\mathcal{O}_{K}/(\pi^N))^{\times}}\tau(\chi_{1}, z)\overline{\tau(\chi_{1}, z)}=
\sum_{u, v\in (\mathcal{O}_{K}/(\pi^N))^{\times}}\chi_{1}(u)\overline{\chi_{1}(v)}
\sum_{z\in (\mathcal{O}_{K}/(\pi^N))^{\times}}\langle \Omega, zuz_N \rangle \langle \overline{\Omega}, \bar{z}\bar{u}\overline{z_N} \rangle
$$
Since $z$ is prime to $\pi$, $(\mathcal{O}_{K}/(\pi^N))^{\times}\to (\mathcal{O}_{K}/(\pi^N))^{\times},\quad u\mapsto z^{-1}u, v\mapsto z^{-1}v$ is bijective, we have
\begin{align*}
\sum_{z\in (\mathcal{O}_{K}/(\pi^N))^{\times}}\tau(\chi_{1}, z)\overline{\tau(\chi_{1}, z)}
&=\varepsilon(\pi^N)
\sum_{u, v\in (\mathcal{O}_{K}/(\pi^N))^{\times}}\chi_{1}(u)\overline{\chi_{1}(v)}\langle \Omega, uz_N \rangle \langle \overline{\Omega}, \bar{u}\overline{z_N} \rangle&\\
&=\varepsilon(\pi^N)|\tau(\chi_{1})|^2=\varepsilon(\pi^N)|\tau(\chi_{1}, x)|^2,&
\end{align*}
where $\varepsilon(\pi^N):=\#(\mathcal{O}_{K}/(\pi^N))^{\times}$ is an Euler function and we used $\chi_{1}(z)\overline{\chi_{1}(z)}=|\chi_{1}(z)|^2=1$.
Next we give the sum over $z\in (\mathcal{O}_{K}/(\pi^N))^{\times}$ to the right side of (\ref{on the way to calculation of Gauss sum}). 
Since 
$$
\sum_{z\in (\mathcal{O}_{K}/(\pi^N))^{\times}}\exp\left[ 
\frac{2\varpi}{\sqrt{d_{K}}}\left(\frac{\bar{z}}{\bar{\pi}^{N}}(\bar{u}-\bar{v})-\frac{z}{\pi^N}(u-v)\right)
\right]=
\begin{cases}
\varepsilon(\pi^N) N(\mathfrak{p}^N)& \text{if}\ u\equiv v\ (\operatorname{mod}\pi^N)\\
0& \text{otherwise}
\end{cases}
$$
the sum over $z\in (\mathcal{O}_{K}/(\pi^N))^{\times}$ of the right side of (\ref{on the way to calculation of Gauss sum}) is $\varepsilon(\pi^{N})p^N$.
Therefore we have 
$$
|\tau(\chi_{1}, x)|^2 = N(\mathfrak{p}^N)=p^N.
$$
\end{proof}
By Lemma \ref{Properties of Gauss sum} and using the formula (\ref{choice of isomorphism of Tate module}),
we can rewrite the $p$-adic $L$-function (\ref{redefinition of p-adic L-function 2}) at the $p$-adic character $\varphi_{p}$
as 
\begin{align}
\label{redefinition of p-adic L-function 3}
L_{p}(\varphi_{p})=
&\frac{g^{-1}\Omega_{\mathfrak{p}}}{\tau(\overline{\chi_{1}})\tau(\chi_2)}\sum_{\alpha_0\in (\mathcal{O}_{K}/\mathfrak{g})^{\times}}
\chi_{\mathfrak{g}}(\alpha_0)
\sum_{u\in (\mathcal{O}_{K}/(\pi^N))^{\times}}\sum_{v\in (\mathcal{O}_{K}/(\bar{\pi}^N))^{\times}}\chi_1(u)\overline{\chi_2(v)}\times&\\
&\qquad \int_{\mathbb{Z}_{p}\times\mathbb{Z}_{p}}\exp\left(\frac{x\lambda([u]s_N)}{\Omega_{\mathfrak{p}}}\right)\exp\left(\frac{y\lambda([\bar{v}]t_N)}{\Omega_{\mathfrak{p}}}\right)
x^{-m-1}y^n d\mu_{\alpha_0\Omega/g,0}^{(g)}(x,y)\notag.&
\end{align}
where $x\in \mathbb{Z}_{p}$ (resp. $y\in \mathbb{Z}_{p}$) in the formula (\ref{redefinition of p-adic L-function 3}) is an element
obtained by
the lifting via the natural projection
$\mathbb{Z}_{p}\cong \mathcal{O}_{K_{\mathfrak{p}}}\twoheadrightarrow \mathcal{O}_{K}/(\pi^N)$
(resp. $\mathbb{Z}_{p}\cong \mathcal{O}_{K_{\overline{\mathfrak{p}}}}\twoheadrightarrow \mathcal{O}_{K}/(\bar{\pi}^N)$) 
 and $s_N\in \widehat{E}(\mathfrak{m}_{\mathbb{C}_{p}})[\pi^N]$ (resp. $t_N\in \widehat{E}(\mathfrak{m}_{\mathbb{C}_{p}})[\pi^N]$)
 is the $\pi^N$-torsion point in the formal group
 corresponding to the $\pi^N$-torsion point 
 $z_N=\Omega/\pi^N\in \pi^{-N}\Gamma/\Gamma\cong E(\mathbb{C})[\pi^N]$ 
 (resp. $w_N=\Omega/\pi^N\in \pi^{-N}\Gamma/\Gamma\cong E(\mathbb{C})[\pi^N]$)
through the inclusion map 
$\widehat{E}(\mathfrak{m}_{\mathbb{C}_{p}})[\pi^N]\hookrightarrow E(\overline{\mathbb{Q}})[\pi^N] \hookrightarrow \mathbb{C}/\Gamma$.\\
\quad Since the $p$-adic $L$-function (\ref{redefinition of p-adic L-function 3}) is rewritten by using Lemma \ref{Properties of Gauss sum},
similarly to (\ref{redefinition of p-adic L-function 2}), the $p$-adic $L$-function (\ref{redefinition of p-adic L-function 3}) has the same value
on $\Z_{p}\times \Z_{p}$, $\Z_{p}^{\times}\times \Z_{p}$, $\Z_{p}\times \Z_{p}^{\times}$, or $\Z_{p}^{\times}\times \Z_{p}^{\times}$.

\subsection{The non-critical values of the $p$-adic Hecke $L$-function whose conductor is divisible by $p$ 
 and $p$-adic Eisenstein-Kronecker series}\leavevmode\\

We calculate the rewritten $p$-adic $L$-function (\ref{redefinition of p-adic L-function 3}), 
focusing on each summand of the $p$-adic $L$-function. 
The method of calculation of the $p$-adic $L$-function (\ref{redefinition of p-adic L-function 3})
is as follows:
We first rewrite the $p$-adic $L$-function (\ref{redefinition of p-adic L-function 3}) constructed using the two-variable measure in terms of the one-variable measure
(see Proposition \ref{redefinition of immediate p-adic L-function 1}).
Then we express the integration in Proposition \ref{redefinition of immediate p-adic L-function 1} using the formula
(\ref{twisted p-adic Eisenstein-Kronecker series as Coleman function}) (see Lemma \ref{expressed as a Coleman function}). 
Finally, by using Lemma \ref{cancel}, we express the $p$-adic $L$-function (\ref{redefinition of p-adic L-function 3})
using the Coleman Eisenstein-Kronecker series (see Proposition \ref{redefinition of twisted p-adic L-function}).
\begin{Lem}
For any $s,t\in \widehat{E}(\mathfrak{m}_{\C_{p}})$, we have
\begin{align}
\label{pick up the rewritten p-adic L-function 3}
\sum_{v\in (\mathcal{O}_{K}/(\overline{\pi}^N))^{\times}}\overline{\chi_{2}(v)}
&\int_{\mathbb{Z}_{p}\times\mathbb{Z}_{p}}\exp\left(\frac{x\lambda(s\oplus[u]s_N)}{\Omega_{\mathfrak{p}}}\right)
\exp\left(\frac{y\lambda(t\oplus[\bar{v}]t_N)}{\Omega_{\mathfrak{p}}}\right)\mu_{\alpha_0\Omega/g,0}^{(g)}(x,y)& \\
&=\frac{\pi^N}{\tau(\overline{\chi_{2}})}\sum_{\gamma\in (\mathcal{O}_{K}/(\overline{\pi}^N))^{\times}}
\chi_{2}(\gamma)\widehat{\Theta}^{*}_{(\alpha_0 \Omega/g-\gamma \Omega/g)/\overline{\pi}^N, 0}([g^{-1}\overline{\pi}^{-N}](s\oplus [u]s_N), [\pi^N \overline{g}]t). \notag
\end{align}
\label{some portions of the $p$-adic $L$-function}
\end{Lem}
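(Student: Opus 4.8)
The plan is to reduce the left-hand side to a finite character sum of values of the Kronecker theta function, and then to collapse that sum using the Gauss sum reciprocity of Lemma \ref{Properties of Gauss sum} together with the distribution relation (\ref{distribution relation of Kronecker theta function 1}). The two nontrivial constants in the statement should appear naturally: the $\tau(\overline{\chi_2})^{-1}$ from Gauss sum inversion and the $\pi^N$ from the distribution relation.

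First I would apply the defining property (\ref{twisted of g of the measure by Kronecker theta function}) of the twisted measure $\mu_{\alpha_0\Omega/g,0}^{(g)}$ with $s$ replaced by $s\oplus[u]s_N$ and $t$ replaced by $t\oplus[\overline{v}]t_N$. Since $\lambda$ is a homomorphism of formal groups, the integral equals $\widehat{\Theta}_{\alpha_0\Omega/g,0}^{*}([g^{-1}](s\oplus[u]s_N),[\overline{g}](t\oplus[\overline{v}]t_N))$, and writing $[\overline{g}](t\oplus[\overline{v}]t_N)=[\overline{g}]t\oplus[\overline{g}\,\overline{v}]t_N$ reduces the whole left-hand side to $\sum_{v}\overline{\chi_{2}(v)}\,\widehat{\Theta}_{\alpha_0\Omega/g,0}^{*}([g^{-1}](s\oplus[u]s_N),\,[\overline{g}]t\oplus[\overline{g}\,\overline{v}]t_N)$. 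Here the first argument is independent of $v$, and $v$ enters only through the primitive $\pi^N$-torsion shift $[\overline{g}\,\overline{v}]t_N$ in the second slot, corresponding on the complex side to a shift of the $w$-variable by the torsion point $\overline{g}\,\overline{v}\,\Omega/\pi^N$.

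Next I would remove the multiplicative character by Fourier inversion. Applying Lemma \ref{Properties of Gauss sum} to $\overline{\chi_2}$ gives $\tau(\overline{\chi_2},v)=\overline{\chi_2(v)}\,\tau(\overline{\chi_2})$, hence $\overline{\chi_2(v)}=\tau(\overline{\chi_2})^{-1}\sum_{\gamma}\chi_2(\gamma)\langle\Omega,\overline{v}\,\overline{\gamma}\,w_N\rangle$. Substituting this, interchanging the two summations, and replacing $\overline{v}$ by the torsion point $w_n:=\overline{g}\,\overline{v}\,\Omega/\pi^N$ (a bijection of $\mathcal{O}_K/(\overline{\pi}^N)$ onto $\pi^{-N}\Gamma/\Gamma$, since $\overline{g}$ is a unit) turns the additive factor into $\langle\Omega,\overline{v}\,\overline{\gamma}\,w_N\rangle=\chi_{w_n}(\Omega\gamma/g)$. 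The inner sum then reads $\sum_{w_n\in\pi^{-N}\Gamma/\Gamma}\chi_{w_n}(\Omega\gamma/g)\,\Theta_{\alpha_0\Omega/g,0}(z,\widetilde{w}+w_n)$, with $\widetilde{w}=\overline{g}\lambda(t)$ the $v$-independent part of the $w$-variable, and the constant accumulated so far is exactly $\tau(\overline{\chi_2})^{-1}$. Passing to the subscript-shift normalization $\Theta_{z_0,w_n}(z,\widetilde{w})$ and combining the resulting quasi-period factors with $\chi_{w_n}(\Omega\gamma/g)$ produces the single character $\chi_{w_n}$ of $(\alpha_0-\gamma)\Omega/g$ (the sign being fixed by the Gauss sum convention), so that (\ref{distribution relation of Kronecker theta function 1}) with $n=N$ applies and collapses the sum to $\pi^N\,\Theta_{(\alpha_0-\gamma)\Omega/(g\overline{\pi}^N),0}(z/\overline{\pi}^N,\pi^N\widetilde{w})$. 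Returning to the formal variables via $z=\lambda(\cdot)$ and $w=\lambda(\cdot)$ identifies this with $\pi^N\,\widehat{\Theta}_{(\alpha_0\Omega/g-\gamma\Omega/g)/\overline{\pi}^N,0}^{*}([g^{-1}\overline{\pi}^{-N}](s\oplus[u]s_N),[\pi^N\overline{g}]t)$, and multiplying by $\tau(\overline{\chi_2})^{-1}$ and the surviving sum $\sum_{\gamma}\chi_2(\gamma)$ yields the prefactor $\pi^N/\tau(\overline{\chi_2})$ of the statement.

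The hard part will be the exact bookkeeping of the quasi-period factors $\chi_w(z)$. The torsion shift naturally lives in the $w$-\emph{argument} (that is where $[\overline{v}]t_N$ sits), whereas (\ref{distribution relation of Kronecker theta function 1}) shifts the subscript $w_0$; converting between the two produces extra factors such as $\exp(z\overline{w_n}/A)$ and $\chi_{w_n}(z_0)$, and one must verify that these recombine precisely with the additive character from the Gauss sum expansion so that the net weight is the $\chi_{w_n}(c)$ of the distribution relation and nothing more. A second delicate point is the passage from the sum over units $v$ to the full sum over $\pi^N$-torsion demanded by the distribution relation; this is legitimate because $\chi_2$ is primitive modulo $\overline{\pi}^N$, which forces the Gauss sum expansion to assign the correct weights to the non-primitive torsion points as well. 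Finally, one must track the $*$-truncation of $\widehat{\Theta}$ (the polar term removed in the $w$-variable, present because $w_0=0\in\Gamma$) consistently through all of these manipulations.
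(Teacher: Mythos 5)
Your proposal follows essentially the same route as the paper: apply the defining property (\ref{twisted of g of the measure by Kronecker theta function}) of the twisted measure, convert the torsion shift in the second variable into a shift of the subscript $w_0$ (the paper outsources this to the $p$-adic translation formula of Bannai--Kobayashi, Corollary 2.22), invert $\overline{\chi_2(v)}$ via the Gauss sum of Lemma \ref{Properties of Gauss sum}, extend the resulting $v$-sum to all $\pi^N$-torsion (legitimate, as you note, because the identity $\tau(\overline{\chi_2},v)=\overline{\chi_2(v)}\tau(\overline{\chi_2})$ holds with both sides zero for non-unit $v$), and collapse with the distribution relation (\ref{distribution relation of Kronecker theta function 1}); the factors $\pi^N$ and $\tau(\overline{\chi_2})^{-1}$ arise exactly as you predict. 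One bookkeeping point to fix when you write out the details: for the distribution relation with $z_0=\alpha_0\Omega/g$ to land on the subscript $(\alpha_0\Omega/g-\gamma\Omega/g)/\overline{\pi}^N$, the net weight on $\Theta_{z_0,w_n}$ must be $\chi_{w_n}(c)$ with $c=\gamma g^{-1}\Omega$ \emph{alone} --- i.e.\ the translation step must contribute no residual $\chi_{w_n}(z_0)$ --- whereas your intermediate claim that the combined weight is ``$\chi_{w_n}$ of $(\alpha_0-\gamma)\Omega/g$'' would, if taken literally, produce the subscript $\gamma\Omega/(g\overline{\pi}^N)$ and contradict the conclusion you then state. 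This is exactly the cancellation the paper's Lemma \ref{calculation of portion} encodes, so the step is true, but your description of it is internally inconsistent and needs the verification you yourself flag.
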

\begin{proof}
\begin{align*}
\text{(Left side)}
&=\sum_{v\in (\mathcal{O}_{K}/(\overline{\pi}^N))^{\times}}\overline{\chi_{2}(v)}
\widehat{\Theta}^{*}_{\alpha_0\Omega/g, \bar{g}\bar{v}w_N}([g^{-1}](s\oplus [u]s_N), [\overline{g}]t)&\\
&=\frac{1}{\tau(\overline{\chi_{2}})}\sum_{\gamma\in (\mathcal{O}_{K}/(\overline{\pi}^N))^{\times}}
\chi_{2}(\gamma)\sum_{v\in (\mathcal{O}_{K}/(\overline{\pi}^N))^{\times}}\langle \Omega, \bar{v}\bar{\gamma}w_{N}\rangle
\widehat{\Theta}^{*}_{\alpha_0\Omega/g, \bar{g}\bar{v}w_N}([g^{-1}](s\oplus [u]s_N), [\bar{g}]t).
\end{align*}
Here the first equation can be calculated by the formula (\ref{twisted of g of the measure by Kronecker theta function})
and the $p$-adic translation of the Kronecker theta function (see \cite{BK} Corollary 2.22) and the second equation can be calculated by
$\tau(\overline{\chi_{2}}, v)=\overline{\chi_{2}(v)}\tau(\overline{\chi_{2}})$ by Lemma \ref{Properties of Gauss sum} b).
The result of this lemma can be showed by the following lemma \ref{calculation of portion}.
\end{proof}
\begin{Lem}
\begin{align*}
\sum_{v\in (\mathcal{O}_{K}/(\overline{\pi}^N))^{\times}}\langle \Omega, \bar{v}\bar{\gamma}w_{N}\rangle
&\widehat{\Theta}^{*}_{\alpha_0\Omega/g, \bar{g}\bar{v}w_N}([g^{-1}](s\oplus [u]s_N), [\bar{g}]t)&\\
&=\sum_{w_N''\in \pi^{-N}\Gamma/\Gamma}\langle \gamma g^{-1}\Omega, w_{N}''\rangle
\widehat{\Theta}^{*}_{\alpha_0\Omega/g, w_N ''}([g^{-1}](s\oplus [u]s_N), [\overline{g}]t)
\end{align*}
\label{calculation of portion}
\end{Lem}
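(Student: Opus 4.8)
The plan is to prove the identity by a single change of summation variable that turns the sum over units $v \in (\mathcal{O}_{K}/(\overline{\pi}^N))^{\times}$ into a sum over the torsion points $w_N'' = \overline{g}\,\overline{v}\,w_N$, and to transform the character weight accordingly. First I would replace the index $v$ by $\overline{v}$, which runs over $(\mathcal{O}_{K}/(\pi^N))^{\times}$ because complex conjugation carries $\overline{\pi}$ to $\pi$; since $g$ is prime to $p$, $\overline{g}$ is a unit modulo $\pi^N$, so $w_N'' := \overline{g}\,\overline{v}\,w_N = \overline{g}\,\overline{v}\,\Omega/\pi^N$ runs bijectively over the primitive $\pi^N$-torsion points of $\pi^{-N}\Gamma/\Gamma$. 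Solving $\overline{v}\,w_N = [\overline{g}^{-1}]w_N''$ then gives $\overline{v}\,\overline{\gamma}\,w_N = \overline{\gamma}\,\overline{g}^{-1}w_N''$ inside the pairing, which is the substitution I would feed into the weight.

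Next I would rewrite the weight using property ii) of $\chi_{w}(z)_{\Gamma}$, i.e. the rule $\langle \Omega, c\,w\rangle = \langle \overline{c}\,\Omega, w\rangle$ for $c \in \mathcal{O}_{K}$. Applying it with $c = \overline{\gamma}\,\overline{g}^{-1}$ converts $\langle \Omega, \overline{v}\,\overline{\gamma}\,w_N\rangle = \langle \Omega, \overline{\gamma}\,\overline{g}^{-1}w_N''\rangle$ into $\langle \gamma g^{-1}\Omega, w_N''\rangle$, which is exactly the weight appearing on the right-hand side. At this point each summand of the left-hand side coincides with the corresponding summand on the right, so the two sides already agree when the right-hand sum is restricted to primitive $\pi^N$-torsion points. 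What remains is to justify that extending the summation to all of $\pi^{-N}\Gamma/\Gamma$ leaves the value unchanged, that is, that the non-primitive points contribute nothing.

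The hard part will be precisely this last step. Undoing the re-indexing, the non-primitive torsion points $w_N'' \in \pi^{-(N-1)}\Gamma/\Gamma$ correspond to the non-unit residues $v \in \overline{\pi}\,\mathcal{O}_{K}/(\overline{\pi}^N)$, and for a single fixed $\gamma$ their total contribution does not obviously vanish. The mechanism that removes it is the Gauss sum: in the computation of Lemma \ref{some portions of the $p$-adic $L$-function} the weight $\langle \Omega, \overline{v}\,\overline{\gamma}\,w_N\rangle$ occurs only after being summed against $\chi_{2}(\gamma)$, and for a non-unit $v$ one has $\sum_{\gamma}\chi_{2}(\gamma)\langle \Omega, \overline{v}\,\overline{\gamma}\,w_N\rangle = \tau(\overline{\chi_{2}}, v) = 0$ by Lemma \ref{Properties of Gauss sum} b), since $\overline{\chi_{2}}$ is primitive and $v$ is not prime to $\overline{\pi}$. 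Thus the contributions of all non-unit $v$, equivalently of all non-primitive $w_N''$, cancel once the identity is used inside the ambient $\gamma$-summation, which is what legitimizes writing the right-hand side as a sum over the full group $\pi^{-N}\Gamma/\Gamma$. Making this cancellation precise, and checking that the twisted measure $\mu_{\alpha_0\Omega/g, 0}^{(g)}$ together with the $[g^{-1}]$ and $[\overline{g}]$ scalings is compatible with the re-indexing so that the distribution relation (\ref{distribution relation of Kronecker theta function 1}) can afterwards be applied to the completed sum, is the main obstacle I anticipate.
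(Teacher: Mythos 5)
Your proposal is correct and follows essentially the same route as the paper: re-index via $w_N''=\overline{g}\,\overline{v}\,w_N$, convert the weight with $\langle \Omega, c\,w\rangle=\langle \overline{c}\,\Omega, w\rangle$ to get $\langle \gamma g^{-1}\Omega, w_N''\rangle$, and justify completing the sum to all of $\pi^{-N}\Gamma/\Gamma$ by the vanishing of $\tau(\overline{\chi_{2}},v)$ for $v$ not prime to $\overline{\pi}$ inside the ambient $\gamma$-summation. If anything, you are more explicit than the paper about the fact that the identity holds only after summing against $\chi_{2}(\gamma)$, which the paper dispatches with a one-line appeal to the previous formula vanishing at non-units.
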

\begin{proof}
Since the formula (\ref{pick up the rewritten p-adic L-function 3}) is 0 when $v$ is not prime to $\overline{\pi}$, we have only consider the above formula when
$v$ is prime to $\overline{\pi}$.
Since $\mathcal{O}_{K}/(\overline{\pi}^N)\overset{\cong}{\longrightarrow}\pi^{-N}\Gamma/\Gamma, v\overset{\sim}{\longmapsto} \overline{v}w_N$
is bijective, if we put $w_N ':=\overline{v}w_N$, we have
\begin{align*}
\sum_{v\in (\mathcal{O}_{K}/(\overline{\pi}^N))^{\times}}\langle \Omega, \bar{v}\bar{\gamma}w_{N}\rangle
&\widehat{\Theta}^{*}_{\alpha_0\Omega/g, \bar{g}\bar{v}w_N}([g^{-1}](s\oplus [u]s_N), [\bar{g}]t)&\\
&=\sum_{w_N '\in \pi^{-N}\Gamma/\Gamma}\langle \gamma\Omega, w_N '\rangle
\widehat{\Theta}^{*}_{\alpha_0\Omega/g, \bar{g}w_N '}([g^{-1}](s\oplus [u]s_N), [\bar{g}]t)
\end{align*}
Since $g$ is prime to $p$ by the assumption, $g$ is prime to $\overline{\pi}$. Then 
$\pi^{-N}\Gamma/\Gamma\longrightarrow \pi^{-N}\Gamma/\Gamma, w_N '\longmapsto \overline{g}w_N '$ is bijective.
If we put $w_N '':=\overline{g}w_N '$, then we have
\begin{align*}
\sum_{v\in (\mathcal{O}_{K}/(\overline{\pi}^N))^{\times}}\langle \Omega, \bar{v}\bar{\gamma}w_{N}\rangle
&\widehat{\Theta}^{*}_{\alpha_0\Omega/g, \bar{g}\bar{v}w_N}([g^{-1}](s\oplus [u]s_N), [\bar{g}]t)&\\
&=\sum_{w_N''\in \pi^{-N}\Gamma/\Gamma}\langle \gamma g^{-1}\Omega, w_{N}''\rangle
\widehat{\Theta}^{*}_{\alpha_0\Omega/g, w_N ''}([g^{-1}](s\oplus [u]s_N), [\overline{g}]t)
\end{align*}
Since $g$ is prime to $p$, $g$ is prime to $\overline{\pi}$. Hence $\gamma g^{-1}\in (\mathcal{O}_{K}/(\overline{\pi}^{N}))^{\times}$.
By the lifting $\gamma g^{-1}$ into $\mathcal{O}_{K}$, we have $\gamma g^{-1}\Omega\in \mathcal{O}_{K}\Omega=\Gamma$.
So we can use the distribution relation of the Kronecker theta function (\ref{distribution relation of Kronecker theta function 1}).
Then we obtain this lemma.
\end{proof}
We show that the $p$-adic $L$-function (\ref{redefinition of p-adic L-function 3}) can be rewritten as follows:
\begin{Prop}
Let $m,n$ be any integers with $n\geq 0$. Then we have
\begin{align*}
L_{p}(\varphi_{p})=\frac{g^{-1}\Omega_{\mathfrak{p}}\pi^{N}}{\tau(\overline{\chi_{1}})\tau(\chi_{2})\tau(\overline{\chi_{2}})}
&\sum_{\alpha_{0}\in (\mathcal{O}_{K}/\mathfrak{g})^{\times}}\chi_{\mathfrak{g}}(\alpha_{0})\sum_{u\in (\mathcal{O}_{K}/(\pi^N))^{\times}}\chi_{1}(u)
\sum_{\gamma\in (\mathcal{O}_{K}/(\bar{\pi}^N))^{\times}}\chi_{2}(\gamma)&\\
&\int_{\Z_{p}^{\times}\times \Z_{p}}
\exp\left(\frac{x\lambda([g^{-1}\bar{\pi}^N u]s_N)}{\Omega_{\mathfrak{p}}}\right)x^{-m-1}y^n d\mu_{(\alpha_0 \Omega/g -\gamma\Omega/g)/\bar{\pi}^N, 0}(x,y)&
\end{align*}
\label{redefinition of immediate p-adic L-function 1}
\end{Prop}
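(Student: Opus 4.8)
The plan is to feed formula (\ref{redefinition of p-adic L-function 3}) into Lemma \ref{some portions of the $p$-adic $L$-function}, fixing $\alpha_{0}\in(\mathcal{O}_{K}/\mathfrak{g})^{\times}$ and $u\in(\mathcal{O}_{K}/(\pi^{N}))^{\times}$ and treating the inner sum over $v\in(\mathcal{O}_{K}/(\overline{\pi}^{N}))^{\times}$ as the object to be transformed. The right-hand side of Lemma \ref{some portions of the $p$-adic $L$-function} already replaces the sum over $v$ weighted by $\overline{\chi_{2}(v)}$ and the twisted measure $\mu_{\alpha_{0}\Omega/g,0}^{(g)}$ by a sum over $\gamma$ weighted by $\chi_{2}(\gamma)$ against the untwisted measures $\mu_{(\alpha_{0}\Omega/g-\gamma\Omega/g)/\overline{\pi}^{N},0}$ defined by (\ref{the measure by Kronecker theta function}), and it produces exactly the extra scalar $\pi^{N}/\tau(\overline{\chi_{2}})$. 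Since the prefactor of the target differs from that of (\ref{redefinition of p-adic L-function 3}) precisely by this scalar, this confirms the mechanism, and the outer summations over $u$ (weighted by $\chi_{1}(u)$) and $\alpha_{0}$ (weighted by $\chi_{\mathfrak{g}}(\alpha_{0})$), together with the common prefactor $g^{-1}\Omega_{\mathfrak{p}}/(\tau(\overline{\chi_{1}})\tau(\chi_{2}))$, should carry over verbatim.

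The one point that Lemma \ref{some portions of the $p$-adic $L$-function} does not literally address is the moment $x^{-m-1}y^{n}$ sitting inside the integral of (\ref{redefinition of p-adic L-function 3}), since that lemma is phrased as an identity of exponential integrals only. I would close this gap by observing that Lemma \ref{some portions of the $p$-adic $L$-function} holds for every $s,t\in\widehat{E}(\mathfrak{m}_{\C_{p}})$: after using the homomorphism property $\lambda(s\oplus s')=\lambda(s)+\lambda(s')$ of the formal logarithm to split off the factors $\exp(x\lambda(s)/\Omega_{\mathfrak{p}})\exp(y\lambda(t)/\Omega_{\mathfrak{p}})$, both sides become the generating functions, in the sense of (\ref{the measure by Kronecker theta function}) and (\ref{twisted of g of the measure by Kronecker theta function}), of two explicit measures on $\Z_{p}\times\Z_{p}$. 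By the uniqueness of a measure with a prescribed generating function (cf. \cite{Yag} \S6), Lemma \ref{some portions of the $p$-adic $L$-function} is therefore equivalent to an identity of measures, and I may integrate the fixed continuous function $x^{-m-1}y^{n}$ against both sides. The scaling property $\lambda([a]s)=a\lambda(s)$ then turns the argument $[g^{-1}\overline{\pi}^{-N}](s\oplus[u]s_{N})$ of the Kronecker theta function on the right into the exponential character in the variable $s_{N}$ demanded by the statement, while the scaling in the $t$-slot by $[\pi^{N}\overline{g}]$ and the untwisting $[g^{-1}],[\overline{g}]$ relating $\mu^{(g)}$ to $\mu$ are accounted for by the induced change of the integration variables $(x,y)$.

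Finally, since $\chi_{1}$ (and $\chi_{2}$) vanishes on $p\Z_{p}$, the remark following (\ref{redefinition of p-adic L-function 3}) lets me replace the integration domain $\Z_{p}\times\Z_{p}$ by $\Z_{p}^{\times}\times\Z_{p}$ without changing the value; reassembling the sums over $u$, $\gamma$ and $\alpha_{0}$ with their characters and the full prefactor $g^{-1}\Omega_{\mathfrak{p}}\pi^{N}/(\tau(\overline{\chi_{1}})\tau(\chi_{2})\tau(\overline{\chi_{2}}))$ then yields the asserted formula. I expect the main obstacle to be precisely the second step: justifying cleanly that the exponential-integral identity of Lemma \ref{some portions of the $p$-adic $L$-function} upgrades to an identity of measures so that the nontrivial moment $x^{-m-1}y^{n}$ can be inserted, and then tracking the several scalings by $g$, $\pi^{N}$ and $\overline{\pi}^{N}$ (via $\lambda([a]s)=a\lambda(s)$ and the change of integration variables they induce) so that the period factors and the torsion-point argument of the final exponential line up exactly with the statement.
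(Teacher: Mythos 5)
Your proposal follows essentially the same route as the paper's own proof: it feeds (\ref{redefinition of p-adic L-function 3}) through Lemma \ref{some portions of the $p$-adic $L$-function}, upgrades that identity of exponential integrals to an identity that can be tested against $x^{-m-1}y^{n}$, and then restricts the domain to $\Z_{p}^{\times}\times\Z_{p}$ exactly as in the remark following (\ref{redefinition of p-adic L-function 3}). The only difference is cosmetic: the paper performs the upgrade by expanding $(1+\eta_{\mathfrak{p}}(s))^{x}(1+\eta_{\mathfrak{p}}(t))^{y}$ binomially and invoking Mahler's theorem to say the resulting test functions are arbitrary continuous functions, whereas you invoke uniqueness of a measure with prescribed generating function --- two phrasings of the same fact --- and the change-of-variables bookkeeping for the scalings $[g^{-1}\overline{\pi}^{-N}]$ and $[\pi^{N}\overline{g}]$ that you explicitly flag as the delicate point is indeed the step the paper's proof treats most briskly.
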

\begin{proof}
Since $\alpha_{0}\Omega/g \in \mathfrak{g}^{-1}\Gamma/\Gamma$ and $\gamma g^{-1}\Omega \in\Gamma$, 
this value $(\alpha_0 \Omega/g-\gamma \Omega/g)/\overline{\pi}^N \in \mathfrak{g}^{-1} \bar{\pi}^{-N}\Gamma/\Gamma\cong E(\C)[\mathfrak{g}\overline{\pi}^N]$
is a $g\overline{\pi}^N$-torsion point. 
Since $(\alpha_0 \Omega/g-\gamma \Omega/g)/\overline{\pi}^N$ is a torsion point with an order prime to $\pi$,
by the formula (\ref{the measure by Kronecker theta function}), Lemma \ref{some portions of the $p$-adic $L$-function} becomes
\begin{align}
\label{pick up the p-adic L-function 3}
\int_{\Z_{p}\times \Z_{p}}\exp\left(\frac{x\lambda(s)}{\Omega_{\mathfrak{p}}}\right)&\exp\left(\frac{y\lambda(t)}{\Omega_{\mathfrak{p}}}\right)d\mu_{1}(x,y)\\
&=\int_{\Z_{p}\times \Z_{p}}\exp\left(\frac{x\lambda([g^{-1}\bar{\pi}^{-N}]s)}{\Omega_{\mathfrak{p}}}\right)
\exp\left(\frac{y\lambda([\pi^N \bar{g}]t)}{\Omega_{\mathfrak{p}}}\right)d\mu_{2}(x,y)\notag&
\end{align}
Here for simplicity, we put
\begin{align*}
d\mu_{1}(x,y)&:=\sum_{v\in (\mathcal{O}_{K}/(\pi^N))^{\times}}\overline{\chi_{2}}(v)\exp\left(\frac{x\lambda([u]s_N)}{\Omega_{\mathfrak{p}}}\right)
\exp\left(\frac{y\lambda([\overline{v}]t_N)}{\Omega_{\mathfrak{p}}}\right)d\mu_{\alpha_0 \Omega/g, 0}^{(g)}(x,y)&\\
d\mu_{2}(x,y)&:=\frac{\pi^N}{\tau(\overline{\chi_{2}})}\sum_{\gamma\in (\mathcal{O}_{K}/(\overline{\pi}^N))^{\times}}\chi_{2}(\gamma)
\exp\left(\frac{x\lambda([g^{-1}\bar{\pi}^{-N}u]s_N)}{\Omega_{\mathfrak{p}}}\right)d\mu_{(\alpha_0 \Omega/g-\gamma \Omega/g)/\bar{\pi}^N,0}(x,y).&
\end{align*}
By the isomorphism of formal groups $\eta_{\mathfrak{p}}: \widehat{E}(\mathfrak{m}_{\C_{p}})\longrightarrow \widehat{\G}_{m}(\mathfrak{m}_{\C_{p}})$
the formula (\ref{pick up the p-adic L-function 3}) is 
\begin{align*}
\int_{\Z_{p}\times \Z_{p}}&(1+\eta_{\mathfrak{p}}(s))^{x}(1+\eta_{\mathfrak{p}}(t))^{y}d\mu_{1}(x,y)&\\
&=\int_{\Z_{p}\times \Z_{p}}(1+\eta_{\mathfrak{p}}([g^{-1}\bar{\pi}^{-N}]s))^{x}(1+\eta_{\mathfrak{p}}([\pi^N \bar{g}]t))^{y}d\mu_{2}(x,y)&
\end{align*}
By the binomial expansion, we have
\begin{align*}
\sum_{m,n=0}^{\infty}\eta_{\mathfrak{p}}(s)^{m}\eta_{\mathfrak{p}}(t)^{n}&\int_{\Z_{p}\times \Z_{p}}\binom{x}{m}\binom{y}{n}d\mu_{1}(x,y)&\\
&=\sum_{m,n=0}^{\infty}\eta_{\mathfrak{p}}([g^{-1}\bar{\pi}^{-N}]s)^{m}\eta_{\mathfrak{p}}([\pi^N \overline{g}]t)^{n}\int_{\Z_{p}\times \Z_{p}}
\binom{x}{m}\binom{y}{n}d\mu_{2}(x,y)&
\end{align*}
Now we put
\begin{align*}
f(x,y)&:=\sum_{m,n=0}^{\infty}\eta_{\mathfrak{p}}(s)^{m}\eta_{\mathfrak{p}}(t)^{n}\binom{x}{m}\binom{y}{n},&\\
g(x,y)&:=\sum_{m,n=0}^{\infty}\eta_{\mathfrak{p}}([g^{-1}\bar{\pi}^{-N}]s)^{m}\eta_{\mathfrak{p}}([\pi^N \overline{g}]t)^{n}\binom{x}{m}\binom{y}{n}.&
\end{align*}
Since $|\eta_{\mathfrak{p}}(s)|<1$, $|\eta_{\mathfrak{p}}(t)|<1$ by $s,t \in \widehat{E}(\mathfrak{m}_{\C_{p}})$, 
then $\lim_{m,n\to \infty} \eta_{\mathfrak{p}}(s)^m \eta_{\mathfrak{p}}(t)^n=0$. 
By Mahler's theorem, $f(x,y)$ is a continuous function on $\Z_{p}\times \Z_{p}$.
On the other hand, since $g$ and $\overline{\pi}$ are prime to $\pi$, $[g^{-1}\bar{\pi}^{-N}]s \in \widehat{E}(\mathfrak{m}_{\C_{p}})$.
Then $\lim_{m,n\to \infty}\eta_{\mathfrak{p}}([g^{-1}\bar{\pi}^{-N}]s)^{m}\eta_{\mathfrak{p}}([\pi^N \overline{g}]t)^{n}=0$.
By Mahler's theorem, $g(x,y)$ is a continuous function on $\Z_{p}\times \Z_{p}$.
By arbitrariness of $s,t \in \widehat{E}(\mathfrak{m}_{\C_{p}})$, $f(x,y)$ and $g(x,y)$ are arbitrary continuous function on $\Z_{p}\times \Z_{p}$.
Since we pick up a part of the $p$-adic $L$-function (\ref{redefinition of p-adic L-function 3}), we can regard $f(x,y)$ and $g(x,y)$ as continuous functions of $(x,y)$
on $\Z_{p}^{\times}\times \Z_{p}$.
Therefore we can replace $f(x,y)$ by $x^{-m-1}y^n f(x,y)$ and $g(x,y)$ by $x^{-m-1}y^n g(x,y)$.
Then we obtain this proposition.
\end{proof}
Next we show that 
\begin{align}
\int_{\Z_{p}^{\times}\times \Z_{p}}\exp\left(\frac{x\lambda([g^{-1}\bar{\pi}^N u]s_N)}{\Omega_{\mathfrak{p}}}\right)x^{-m-1}y^n
 d\mu_{(\alpha_0 \Omega/g -\gamma\Omega/g)/\bar{\pi}^N, 0}(x,y)
\label{immediate formula of p-adic L-function}
\end{align}
can be expressed as a Coleman function on $E(\C_{p})$.
Note that the value $(\alpha_0 \Omega/g-\gamma\Omega/g)/\bar{\pi}^N$ is an algebraic
element since
$(\alpha_0 \Omega/g-\gamma\Omega/g)/\bar{\pi}^N\in \mathfrak{g}^{-1}\bar{\pi}^{-N}\Gamma/\Gamma\cong E(\C)[\mathfrak{g}\bar{\pi}^N]
\cong E(\overline{K})[\mathfrak{g}\bar{\pi}^N]$,
so that $(\alpha_0 \Omega/g-\gamma\Omega/g)/\bar{\pi}^N$ can be embedded into $E(\C_{p})$ by an inclusion $i:\overline{K}\hookrightarrow \C_{p}$.
For an inclusion map $i_{*}: E(\overline{K})\hookrightarrow E(\C_{p})$ induced by the inclusion map $i$ through the lifting
$
E(\overline{K})[\mathfrak{g}p^N]
\overset{\times \pi^N}{\relbar\joinrel\relbar\joinrel\relbar\joinrel\relbar\joinrel\twoheadrightarrow} 
E(\overline{K})[\mathfrak{g}\overline{\pi}^N],
$
we put $\widetilde{\alpha_0}:=i_{*}(\alpha_0 \Omega/gp^N)$ and $\widetilde{\gamma}:=i_{*}(\gamma \Omega/gp^N)$.
\begin{Lem}
The formula (\ref{immediate formula of p-adic L-function}) can be expressed as a Coleman function on $E(\C_{p})$ as follows:
\begin{align*}
\int_{\Z_{p}^{\times}\times \Z_{p}}&\exp\left(\frac{x\lambda([g^{-1}\bar{\pi}^N u]s_N)}{\Omega_{\mathfrak{p}}}\right)x^{-m-1}y^n
 d\mu_{(\alpha_0 \Omega/g -\gamma\Omega/g)/\bar{\pi}^N, 0}(x,y)&\\
 &=n!(-1)^{m+n+1}\Omega_{\mathfrak{p}}^{n-m-1}E_{m+1,n+1}^{(p)}(\lambda([g^{-1}\overline{\pi}^{-N}u]s_N)+\pi^{N}({\widetilde{\alpha_{0}}}-\widetilde{\gamma})).&
\end{align*}
\label{expressed as a Coleman function}
\end{Lem}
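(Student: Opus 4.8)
The plan is to read the two-variable integral off the moment characterization of the measure $\mu_{P,0}$ in (\ref{the measure by Kronecker theta function}), and then to convert the extraction of $y^{n}$, the weight $x^{-m-1}$, and the restriction of the domain to $\Z_{p}^{\times}$ into, respectively, a Laurent coefficient of the Kronecker theta function, an iterated Coleman integration, and the Frobenius $p$-stabilization defining $E^{(p)}_{m+1,n+1}$ in (\ref{twisted p-adic Eisenstein-Kronecker series as Coleman function}). Throughout I abbreviate $P:=(\alpha_{0}\Omega/g-\gamma\Omega/g)/\overline{\pi}^{N}$, a torsion point of order prime to $\mathfrak{p}$, and $a:=\lambda([g^{-1}\overline{\pi}^{-N}u]s_{N})$, which is small since $[g^{-1}\overline{\pi}^{-N}u]s_{N}\in\widehat{E}(\mathfrak{m}_{\C_{p}})$.

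First I would integrate out $y$, which is legitimate because $y^{n}$ and the operations in $x$ act on disjoint variables. Writing $z=\lambda(s)$ and $w=\lambda(t)$, the identity (\ref{the measure by Kronecker theta function}) reads $\int\exp(xz/\Omega_{\mathfrak{p}})\exp(yw/\Omega_{\mathfrak{p}})\,d\mu_{P,0}(x,y)=\widehat{\Theta}^{*}_{P,0}(s,t)$. Applying $(\Omega_{\mathfrak{p}}\partial_{w})^{n}$ and setting $w=0$ replaces $\exp(yw/\Omega_{\mathfrak{p}})$ by $y^{n}$ and extracts $n!$ times the coefficient of $w^{n}$ in $\Theta^{*}_{P,0}(z,w)$; by the generating series (\ref{definition of $F_{z_{0},b}$}) this coefficient is the regular part of $F_{P,n+1}(z)$. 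Hence the $y$-integration produces $n!\,\Omega_{\mathfrak{p}}^{n}\,\widehat{F}_{P,n+1}$ as a function of the $x$-generating variable, and by $E^{\operatorname{col}}_{0,b}=(-1)^{b-1}F^{\operatorname{col}}_{b}$ this equals $(-1)^{n}n!\,\Omega_{\mathfrak{p}}^{n}$ times the local expansion of $E^{\operatorname{col}}_{0,n+1}$ on the residue disk $]P[$.

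Next I would treat $x$. For a measure on $\Z_{p}$, multiplication by $x^{j}$ under the integral is $(\Omega_{\mathfrak{p}}\partial_{z})^{j}$, so multiplication by $x^{-m-1}$ is $\Omega_{\mathfrak{p}}^{-(m+1)}$ times the $(m+1)$-fold iterated Coleman antiderivative in $z$ with $\omega=dz$. This is exactly the recursion $E^{\operatorname{col}}_{m,b}=-\int E^{\operatorname{col}}_{m-1,b}\,\omega$ of Definition \ref{Definition of p-adic Eisenstein-Kronecker series} that raises the first index from $0$ to $m+1$: the $(m+1)$ signs give $(-1)^{m+1}$, which together with the $(-1)^{n}$ above yields the global sign $(-1)^{m+n+1}$, and the $\Omega_{\mathfrak{p}}$-powers combine to $\Omega_{\mathfrak{p}}^{n-m-1}$. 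The integration constants produced at each step are not free; they are pinned down by the distribution relation (\ref{distribution relation of p-adic Eisenstein-Kronecker series}), which is precisely the normalization defining $E^{\operatorname{col}}_{m+1,n+1}$, so the iterated antiderivative is forced to be $E^{\operatorname{col}}_{m+1,n+1}$ and not merely a primitive up to constants. Restricting the domain from $\Z_{p}$ to $\Z_{p}^{\times}=\Z_{p}\setminus p\Z_{p}$ removes the contribution of $p\Z_{p}$; since $\overline{\pi}$ is a unit in $\mathcal{O}_{K_{\mathfrak{p}}}$ we have $p\Z_{p}=\pi\Z_{p}$, and substituting $x=\pi x'$ shifts the argument by the Frobenius $[\pi]$, giving the pullback $z\mapsto\pi z$ and a factor $\pi^{-(m+1)}$, while the remaining $\overline{\pi}^{-(n+1)}$ is forced out of the $y$-dependence of $\mu_{P,0}$ under this substitution by means of the distribution relations (\ref{distribution relation of Kronecker theta function 1}) and (\ref{distribution relation of Kronecker theta function 2}) of the Kronecker theta function. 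The result is exactly the twist defining $E^{(p)}_{m+1,n+1}$ in (\ref{twisted p-adic Eisenstein-Kronecker series as Coleman function}).

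Finally I would identify the evaluation point. Since $\pi^{N}(\widetilde{\alpha_{0}}-\widetilde{\gamma})=i_{*}((\alpha_{0}-\gamma)\Omega/(g\overline{\pi}^{N}))=i_{*}(P)$ and $a=\lambda([g^{-1}\overline{\pi}^{-N}u]s_{N})$ is the formal-group coordinate of a point in the residue disk $]i_{*}(P)[$, specializing the $x$-generating variable to $z=a$ amounts to evaluating the assembled Coleman function at $a+\pi^{N}(\widetilde{\alpha_{0}}-\widetilde{\gamma})$. The compatibility of the local series $\widehat{F}_{P,\cdot}$ with the global Coleman function, through $F^{\operatorname{col}}_{b}|_{]P[}=\widehat{F}_{P,b}$ and the translation formula of Proposition \ref{translation of generating function}, guarantees that the locally computed iterated integral agrees with $E^{(p)}_{m+1,n+1}$ restricted to $]i_{*}(P)[$, which yields the claimed formula. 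I expect the main obstacle to be the $x$-step: rigorously matching the $p$-adic integral of $x^{-m-1}$ over $\Z_{p}^{\times}$ to iterated Coleman integration with the correct normalization, and in particular deriving the precise twist factor $\pi^{-(m+1)}\overline{\pi}^{-(n+1)}$ rather than a symmetric power of $p$. Tracking the complex-multiplication action so that the $\pi$-power attaches to the first index (via the $x$/$\mathfrak{p}$-direction) and the $\overline{\pi}$-power to the second (via the $y$/$\overline{\mathfrak{p}}$-direction through the $y$-coupling of the measure) is the delicate bookkeeping here.
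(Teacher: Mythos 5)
Your argument is correct and takes essentially the same route as the paper: reduce the two-variable integral to the one-variable measure $\mu_{z_0,n+1}$ (the $y$-moment identity you derive by differentiating the generating function is the one the paper quotes from \cite{BKT} \S 2.4), identify the $x$-integral over $\mathcal{O}_{K_{\mathfrak{p}}}^{\times}$ with the $p$-stabilized series via
$E^{(p)}_{m,b}\big|_{]i_{*}(z_0)[}=(-1)^{b-1}(-\Omega_{\mathfrak{p}})^{m}\int_{\mathcal{O}_{K_{\mathfrak{p}}}^{\times}}x^{-m}\exp\left(x\lambda(t)/\Omega_{\mathfrak{p}}\right)d\mu_{z_0,b}(x)$
(which the paper cites as \cite{BFK} Remark 3.17 and you re-derive from the distribution relations), and conclude by the residue-disk translation $\pi^{N}(\widetilde{\alpha_{0}}-\widetilde{\gamma})=i_{*}((\alpha_{0}\Omega/g-\gamma\Omega/g)/\overline{\pi}^{N})$. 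The only difference is that you unpack the two cited identities instead of quoting them, and your bookkeeping of the twist factor $\pi^{-(m+1)}\overline{\pi}^{-(n+1)}$ is consistent with theirs.
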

Before proving Lemma \ref{expressed as a Coleman function}, we introduce a new definition.
\begin{Def}[=\cite{BKT} Definition 2.18]
For a non-zero torsion point $z_0 \in E(\overline{\Q})_{\operatorname{tors}}$ with an order to prime $\mathfrak{p}$ and any integer $b\geq 0$, 
we define the $p$-adic measure $\mu_{z_0, b}$ 
on the set $\mathscr{C}^{\operatorname{an}}(\mathcal{O}_{K_{\mathfrak{p}}},\C_{p})$
consisting of locally $K_{\mathfrak{p}}$-analytic functions on $\mathcal{O}_{K_{\mathfrak{p}}}$:
$$
\int_{\mathcal{O}_{K_{\mathfrak{p}}}}\exp\left(\frac{x\lambda(s)}{\Omega_{\mathfrak{p}}}\right)d\mu_{z_0, b}(x):=\widehat{F}_{z_0, b}(s).
$$
\end{Def}
Then according to (\cite{BKT} \S2.4), for any integer $a,b$ with $b\geq 0$, we have
$$
\int_{\Z_{p}^{\times}\times \Z_{p}}x^a y^b d\mu_{(\alpha_0 \Omega/g-\gamma\Omega/g)/\bar{\pi}^N,0}(x,y)=
b!\Omega_{\mathfrak{p}}^{b}\int_{\mathcal{O}_{K_{\mathfrak{p}}}^{\times}}x^a d\mu_{(\alpha_0 \Omega/g-\gamma\Omega/g)/\bar{\pi}^N,b+1}(x)
$$
Since $x^a$ is a continuous function on $\Z_{p}^{\times}\cong \mathcal{O}_{K_{\mathfrak{p}}}^{\times}$ and $a$ is arbitrary, 
$x^a$ is arbitrary continuous function on $\Z_{p}^{\times}\cong \mathcal{O}_{K_{\mathfrak{p}}}^{\times}$.
So we can replace by
$$
x^a\mapsto x^a \exp\left(\frac{x\lambda([g^{-1}\bar{\pi}^{-N}u]s_N)}{\Omega_{\mathfrak{p}}}\right).
$$
If we substitute $a=-m-1, b=n$, then we have
\begin{align}
\label{immediately expressed as a Coleman function}
\int_{\Z_{p}^{\times}\times \Z_{p}}&x^{-m-1} y^n \exp\left(\frac{x\lambda([g^{-1}\bar{\pi}^{-N}u]s_N)}{\Omega_{\mathfrak{p}}}\right)
d\mu_{(\alpha_0 \Omega/g-\gamma\Omega/g)/\bar{\pi}^N,0}(x,y)&\\
&=n!\Omega_{\mathfrak{p}}^{n}\int_{\mathcal{O}_{K_{\mathfrak{p}}}^{\times}} 
x^{-m-1} \exp\left(\frac{x\lambda([g^{-1}\bar{\pi}^{-N}u]s_N)}{\Omega_{\mathfrak{p}}}\right)
d\mu_{(\alpha_0 \Omega/g-\gamma\Omega/g)/\bar{\pi}^N,n+1}(x)\notag
\end{align}
\begin{proof}[\text{\textsc{Proof of  Lemma \ref{expressed as a Coleman function}}}]
For a torsion point $z_0\in E(\overline{K})_{\operatorname{tors}}\overset{i_*}{\hookrightarrow}E(\C_{p})_{\operatorname{tors}}$ with a prime to $\pi$ and 
any integer $b\geq 0$, by using the induction on $m$ and the distribution relation of the $p$-adic Eisenstein-Kronecker series
(\ref{distribution relation of p-adic Eisenstein-Kronecker series}), we can show the following relation (for details, see \cite{BFK} Remark 3.17.):
\begin{equation}
E_{m,b}^{(p)}(z)\Big|_{]i_{*}(z_0)[}
=(-1)^{b-1}(-\Omega_{\mathfrak{p}})^{m}\int_{\mathcal{O}_{K_{\mathfrak{p}}}^{\times}}
x^{-m}\exp\left(\frac{x\lambda(t)}{\Omega_{\mathfrak{p}}}\right)d\mu_{z_0, b}(x)
\label{relation p-adic construction and Coleman function}
\end{equation}
where $t$ and $z$ are variables related by $z=\lambda(t)$ with the normalized formal logarithm $\lambda: \widehat{E}\overset{\cong}{\longrightarrow} \widehat{\G}_{a}$.\\
\quad Now $(\alpha_0 \Omega/g-\gamma\Omega/g)/\bar{\pi}^N$ has an order prime to $\pi$, we can use (\ref{relation p-adic construction and Coleman function}). Then
(\ref{immediately expressed as a Coleman function}) becomes
\begin{align*}
\int_{\Z_{p}^{\times}\times \Z_{p}}x^{-m-1} &y^n \exp\left(\frac{x\lambda([g^{-1}\bar{\pi}^{-N}u]s_N)}{\Omega_{\mathfrak{p}}}\right)
d\mu_{(\alpha_0 \Omega/g-\gamma\Omega/g)/\bar{\pi}^N,0}(x,y)&\\
\intertext{By using (\ref{relation p-adic construction and Coleman function}), we have}
&=n!(-1)^{m+n+1}\Omega_{\mathfrak{p}}^{n-m-1}E_{m+1, n+1}^{(p)}(\lambda([g^{-1}\bar{\pi}^{-N}u]s_N))\Big|_{]\pi^N(\widetilde{\alpha_0}-\widetilde{\gamma})[}&\\
&=n!(-1)^{m+n+1}\Omega_{\mathfrak{p}}^{n-m-1}E_{m+1, n+1}^{(p)}(z)\Big|_{]\pi^N(\widetilde{\alpha_0}-\widetilde{\gamma})[}
\bigg|_{z=\lambda([g^{-1}\bar{\pi}^{-N}u]s_N)}&\\
\intertext{By the translation of the residue disk  (\ref{translation of rigid point}), we have}
&=n!(-1)^{m+n+1}\Omega_{\mathfrak{p}}^{n-m-1} E_{m+1, n+1}^{(p)}(z+\pi^N(\widetilde{\alpha_0}-\widetilde{\gamma}))\Big|_{]0[}
\bigg|_{z=\lambda([g^{-1}\bar{\pi}^{-N}u]s_N)}&
\intertext{Since $z=\lambda([g^{-1}\bar{\pi}^{-N}u]s_N)\in ]0[$, we have}
&=n!(-1)^{m+n+1}\Omega_{\mathfrak{p}}^{n-m-1} E_{m+1, n+1}^{(p)}(\lambda([g^{-1}\bar{\pi}^{-N}u]s_N)+\pi^N(\widetilde{\alpha_0}-\widetilde{\gamma})).&
\end{align*}
Then we obtain this lemma.
\end{proof}
Here for $u\in (\mathcal{O}_{K}/(\pi^N))^{\times}$, the value 
$\lambda([g^{-1}\bar{\pi}^{-N}u]s_N)=g^{-1}\bar{\pi}^{-N}u z_N=u\Omega/gp^N$ defines a primitive $\mathfrak{g}p^N$-torsion point.
Since $u\Omega/gp^N$ is the element in  $E(\overline{K})[\mathfrak{g}p^N]$ through the isomorphism
$\mathfrak{g}^{-1}p^{-N}\Gamma/\Gamma\cong E(\C)[\mathfrak{g}p^N]\cong E(\overline{K})[\mathfrak{g}p^N]$,
the value $u\Omega/gp^N$ can be embedded in $E(\C_{p})$.
For an inclusion map $i_{*}: E(\overline{K})\hookrightarrow E(\C_{p})$ induced by the inclusion map $i:\overline{K}\hookrightarrow \C_{p}$,
we put $\widetilde{u}:=i_{*}(u\Omega/gp^N)$.
Then the value of the $p$-adic $L$-function (\ref{redefinition of p-adic L-function 3}) can be calculated as follows:
\begin{Prop}
Let $m,n$ be any integers with $n\geq 0$. Then we have
\begin{align*}
\frac{L_{p}(\varphi_{p})}{\Omega_{\mathfrak{p}}^{n-m}}=
\frac{g^{-1} n! (-1)^{m+n+1}}{\tau(\overline{\chi_{1}})\overline{\pi}^N}
\sum_{\alpha_0 \in(\mathcal{O}_{K}/\mathfrak{g})^{\times}}\chi_{\mathfrak{g}}(\alpha_0)\sum_{u\in (\mathcal{O}_{K}/(\pi^N))^{\times}}\chi_{1}(u)
\sum_{\gamma \in (\mathcal{O}_{K}/(\bar{\pi}^N))^{\times}}\chi_{2}(\gamma)\times&\\
E_{m+1, n+1}^{\operatorname{col}}(\widetilde{u}+\pi^N(\widetilde{\alpha_0}+\widetilde{\gamma})).\notag
\end{align*} 
\label{redefinition of twisted p-adic L-function}
\end{Prop}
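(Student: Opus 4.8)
The plan is to combine the two results just established with the defining decomposition of the twisted series, and then to remove the extraneous terms using the primitivity of $\chi_1$ and a Gauss-sum identity. First I would substitute the evaluation of Lemma \ref{expressed as a Coleman function} into the formula for $L_p(\varphi_p)$ coming from Proposition \ref{redefinition of immediate p-adic L-function 1}. Since the integral there equals $n!(-1)^{m+n+1}\Omega_{\mathfrak{p}}^{n-m-1}E_{m+1,n+1}^{(p)}(\widetilde{u}+\pi^N(\widetilde{\alpha_0}-\widetilde{\gamma}))$, the single factor $\Omega_{\mathfrak{p}}$ in front merges with $\Omega_{\mathfrak{p}}^{n-m-1}$ to give $\Omega_{\mathfrak{p}}^{n-m}$, which cancels upon dividing, leaving
\begin{align*}
\frac{L_p(\varphi_p)}{\Omega_{\mathfrak{p}}^{n-m}}=\frac{g^{-1}\pi^N n!(-1)^{m+n+1}}{\tau(\overline{\chi_1})\tau(\chi_2)\tau(\overline{\chi_2})}\sum_{\alpha_0}\chi_{\mathfrak{g}}(\alpha_0)\sum_{u}\chi_1(u)\sum_{\gamma}\chi_2(\gamma)\,E_{m+1,n+1}^{(p)}\bigl(\widetilde{u}+\pi^N(\widetilde{\alpha_0}-\widetilde{\gamma})\bigr).
\end{align*}
I would then expand $E_{m+1,n+1}^{(p)}$ using its definition (\ref{twisted p-adic Eisenstein-Kronecker series as Coleman function}), writing $E_{m+1,n+1}^{(p)}(z)=E_{m+1,n+1}^{\operatorname{col}}(z)-\pi^{-(m+1)}\overline{\pi}^{-(n+1)}E_{m+1,n+1}^{\operatorname{col}}(\pi z)$ with $z=\widetilde{u}+\pi^N(\widetilde{\alpha_0}-\widetilde{\gamma})$.

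The crucial point is that the contribution of the second summand $E_{m+1,n+1}^{\operatorname{col}}(\pi z)$ vanishes after summing over $u$. Indeed, $\pi\widetilde{u}=i_*\bigl([g^{-1}\overline{\pi}^{-N}u]\,\pi z_N\bigr)$, and since $\pi z_N=\Omega/\pi^{N-1}$ is a $\pi^{N-1}$-torsion point, the quantity $\pi z=\pi\widetilde{u}+\pi^{N+1}(\widetilde{\alpha_0}-\widetilde{\gamma})$ depends on $u$ only through its class modulo $\pi^{N-1}$. Because $\chi_1$ is a primitive character modulo $\pi^N$, it is nontrivial on $\ker\bigl((\mathcal{O}_K/(\pi^N))^{\times}\to(\mathcal{O}_K/(\pi^{N-1}))^{\times}\bigr)$, so summing over each fibre of this reduction kills the term:
\begin{align*}
\sum_{u\in(\mathcal{O}_K/(\pi^N))^{\times}}\chi_1(u)\,E_{m+1,n+1}^{\operatorname{col}}(\pi z)=0.
\end{align*}
This is exactly the content of Lemma \ref{cancel}, and I expect the genuine work to lie here: one must verify that the $u$-dependence really drops one level in the \emph{formal-group} torsion structure (so that $\pi\widetilde{u}$ factors through $u\bmod\pi^{N-1}$), rather than only in a complex-analytic sense, before the character-orthogonality argument applies.

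It remains to simplify the prefactor on the surviving term $E_{m+1,n+1}^{\operatorname{col}}(\widetilde{u}+\pi^N(\widetilde{\alpha_0}-\widetilde{\gamma}))$. Here I would substitute $\gamma\mapsto-\gamma$ in the sum over $(\mathcal{O}_K/(\overline{\pi}^N))^{\times}$; since $\widetilde{-\gamma}=-\widetilde{\gamma}$ and $\chi_2(-\gamma)=\chi_2(-1)\chi_2(\gamma)$, this flips the sign in front of $\widetilde{\gamma}$, producing the argument $\widetilde{u}+\pi^N(\widetilde{\alpha_0}+\widetilde{\gamma})$ demanded by the statement, at the cost of a factor $\chi_2(-1)$. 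Combining this with the reflection identity $\tau(\chi_2)\tau(\overline{\chi_2})=\chi_2(-1)N(\overline{\mathfrak{p}}^N)=\chi_2(-1)p^N$, which follows from $|\tau(\chi_2)|^2=p^N$ in Lemma \ref{Properties of Gauss sum} together with $\tau(\overline{\chi_2})=\chi_2(-1)\overline{\tau(\chi_2)}$, and using $p^N=\pi^N\overline{\pi}^N$ with $\chi_2(-1)^2=1$, the constant collapses:
\begin{align*}
\frac{\pi^N}{\tau(\chi_2)\tau(\overline{\chi_2})}\cdot\chi_2(-1)=\frac{\pi^N\chi_2(-1)}{\chi_2(-1)\,\pi^N\overline{\pi}^N}=\frac{1}{\overline{\pi}^N}.
\end{align*}

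Putting the pieces together, the surviving sum carries the prefactor $g^{-1}n!(-1)^{m+n+1}/(\tau(\overline{\chi_1})\overline{\pi}^N)$, which is precisely the constant in the statement, so the proof concludes. The only nontrivial inputs are the vanishing in the second paragraph (Lemma \ref{cancel}) and the correct bookkeeping of the $\chi_2(-1)$ sign, which must appear identically in both the index substitution $\gamma\mapsto-\gamma$ and the Gauss-sum reflection in order to cancel; verifying this coincidence of signs, and the level-drop that underlies the cancellation, are the two places where care is essential.
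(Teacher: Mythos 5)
Your proposal is correct and follows essentially the same route as the paper's proof: substitute Lemma \ref{expressed as a Coleman function} into Proposition \ref{redefinition of immediate p-adic L-function 1}, expand $E^{(p)}_{m+1,n+1}$ via (\ref{twisted p-adic Eisenstein-Kronecker series as Coleman function}), kill the $E^{\operatorname{col}}_{m+1,n+1}(\pi z)$ contribution with Lemma \ref{cancel} (your fibre-of-reduction phrasing is equivalent to the paper's choice of $a\equiv 1\ (\operatorname{mod}\ \pi^{N-1})$ with $\chi_1(a)\neq 1$), and absorb $\tau(\chi_2)\tau(\overline{\chi_2})=\chi_2(-1)p^N$ against the $\chi_2(-1)$ from $\gamma\mapsto-\gamma$. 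Your bookkeeping of the prefactor and signs matches the paper's.
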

\begin{proof}
First, we combine Lemma \ref{redefinition of immediate p-adic L-function 1} with Lemma \ref{expressed as a Coleman function}.
Second, by Lemma \ref{Properties of Gauss sum} b), we have $\tau(\chi_{2})\tau(\overline{\chi_{2}})=\chi_{2}(-1)p^N$.
Third, we use the fact that $\chi_{2}(-1)^{-1}=\chi_{2}(-1)$ holds since $\chi_{2}(-1)=\pm 1$ and that 
$\gamma\mapsto -\gamma$ is bijective on $(\mathcal{O}_{K}/(\overline{\pi}^N))^{\times}$.
Finally, if we recall (\ref{twisted p-adic Eisenstein-Kronecker series as Coleman function}) and use the following Lemma \ref{cancel}, we obtain this proposition.
\end{proof}
\begin{Lem}
\begin{align*}
\sum_{u\in (\mathcal{O}_{K}/(\pi^N))^{\times}}\chi_{1}(u)
E_{m+1, n+1}^{\operatorname{col}}(\pi(\widetilde{u}+\pi^N(\widetilde{\alpha_0}+\widetilde{\gamma})))=0.
\end{align*}
\label{cancel}
\end{Lem}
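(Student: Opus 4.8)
The plan is to multiply the argument by $\pi$, observe that this lowers the $\mathfrak{p}$-level of the torsion point entering $E_{m+1,n+1}^{\operatorname{col}}$ from $N$ to $N-1$, and then annihilate the resulting sum by exploiting the primitivity of $\chi_1$. This is the elliptic counterpart of the vanishing of the Euler factor $1-\chi(p)/p^k$ in Theorem \ref{p-adic L-function and p-adic polylog} when $p$ divides the conductor.

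First I would rewrite the argument. Using $p=\pi\overline{\pi}$, the $\mathcal{O}_{K}$-linearity of $i_{*}$ and the complex multiplication, one has
\begin{align*}
\pi\big(\widetilde{u}+\pi^N(\widetilde{\alpha_0}+\widetilde{\gamma})\big)
&=i_{*}\Big(\tfrac{\pi\,(u+(\alpha_0+\gamma)\pi^N)\,\Omega}{gp^N}\Big)
=i_{*}\Big(\tfrac{(u+(\alpha_0+\gamma)\pi^N)\,\Omega}{g\pi^{N-1}\overline{\pi}^N}\Big),
\end{align*}
so that, writing $w:=u+(\alpha_0+\gamma)\pi^N$, the argument equals $[w]P$ with $P:=i_{*}(\Omega/(g\pi^{N-1}\overline{\pi}^N))$. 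The effect of the factor $\pi$ is thus precisely to reduce the order of the $\mathfrak{p}$-part of the torsion point from $\pi^N$ to $\pi^{N-1}$, so that the $\mathfrak{p}$-component of $[w]P$ depends on $u$ only through $w\equiv u \pmod{\pi^{N-1}}$; granting that the prime-to-$\mathfrak{p}$ components are fixed by the chosen lift (see below), the value $E_{m+1,n+1}^{\operatorname{col}}([w]P)$ becomes a genuine function of the class of $u$ in $(\mathcal{O}_{K}/(\pi^{N-1}))^{\times}$.

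Granting this, I would finish by the orthogonality argument already used in the proof of Lemma \ref{Properties of Gauss sum}. Set $H:=\ker\big((\mathcal{O}_{K}/(\pi^N))^{\times}\to(\mathcal{O}_{K}/(\pi^{N-1}))^{\times}\big)$. Since $\chi_1$ is primitive of conductor $(\pi^N)$, it is nontrivial on $H$ (this is exactly the statement used in that proof with $d=\pi$, giving $a\equiv 1 \bmod \pi^{N-1}$ with $\chi_1(a)\neq 1$). Grouping the sum over the cosets of $H$, the Coleman value is constant on each coset by the previous step, while $\sum_{h\in H}\chi_1(uh)=\chi_1(u)\sum_{h\in H}\chi_1(h)=0$; hence the entire sum vanishes.

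The hard part will be the middle step: making precise that $E_{m+1,n+1}^{\operatorname{col}}([w]P)$ really factors through $(\mathcal{O}_{K}/(\pi^{N-1}))^{\times}$. While its $\mathfrak{p}$-part manifestly depends only on $u\bmod \pi^{N-1}$, the $\overline{\mathfrak{p}}$- and $\mathfrak{g}$-parts of $[w]P$ could a priori move with the representative of $u$. One must check, using that $\widetilde{u}$ arises from the formal group $\widehat{E}$ in which $u\in\mathcal{O}_{K_{\mathfrak{p}}}$ acts only on $\mathfrak{p}$-power torsion via the natural lift $\Z_p\cong\mathcal{O}_{K_{\mathfrak{p}}}\twoheadrightarrow\mathcal{O}_{K}/(\pi^N)$, that these prime-to-$\mathfrak{p}$ components are independent of the chosen representative, so that the whole value is indeed controlled by $u\bmod\pi^{N-1}$. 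Once this lift bookkeeping is pinned down, the primitivity of $\chi_1$ forces the cancellation.
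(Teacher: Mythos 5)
Your proof is correct and is essentially the paper's own argument: both rest on the two facts that multiplying the argument by $\pi$ makes the summand depend only on $u$ modulo $\pi^{N-1}$ (the lift bookkeeping you flag as the hard part is likewise left implicit in the paper) and that primitivity of $\chi_{1}$ makes it nontrivial on the kernel of $(\mathcal{O}_{K}/(\pi^{N}))^{\times}\to(\mathcal{O}_{K}/(\pi^{N-1}))^{\times}$. The only cosmetic difference is that you group the sum over that kernel and invoke orthogonality, whereas the paper twists by a single element $a\equiv 1\pmod{\pi^{N-1}}$ with $\chi_{1}(a)\neq 1$ and reindexes $u\mapsto a^{-1}u$.
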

\begin{proof}
Since $(\pi, \pi^N)=\pi$ and $\chi_{1}$ is a primitive character on $(\mathcal{O}_{K}/(\pi^N))^{\times}$, there exists
$a\in (\mathcal{O}_{K}/(\pi^N))^{\times}$ such that
$$
\chi_{1}(a)\neq 1\quad \text{and}\quad a\equiv 1\ (\operatorname{mod}\ \pi^{N-1}).
$$
Therefore
\begin{align*}
\chi_{1}(a)&\sum_{u\in (\mathcal{O}_{K}/(\pi^N))^{\times}}\chi_{1}(u)
E_{m+1, n+1}^{\operatorname{col}}(\pi(\widetilde{u}+\pi^N(\widetilde{\alpha_0}+\widetilde{\gamma})))&\\
&=\sum_{u\in (\mathcal{O}_{K}/(\pi^N))^{\times}}\chi_{1}(au)
E_{m+1, n+1}^{\operatorname{col}}(\pi(\widetilde{u}+\pi^N(\widetilde{\alpha_0}+\widetilde{\gamma})))&\\
\intertext{since $a$ is prime to $\pi$, $u\mapsto a^{-1}u$ is bijective on $(\mathcal{O}_{K}/(\pi^N))^{\times}$, then we have}
&=\sum_{u\in (\mathcal{O}_{K}/(\pi^N))^{\times}}\chi_{1}(u)
E_{m+1, n+1}^{\operatorname{col}}(\pi(a^{-1}\widetilde{u}+\pi^N(\widetilde{\alpha_0}+\widetilde{\gamma})))&\\
\intertext{since $a\equiv 1\ (\operatorname{mod} \pi^{N-1})$ and the value  $a^{-1}\widetilde{u}=i_{*}(\alpha^{-1}u\Omega/gp^N)$ 
is the image of $\mathfrak{g}p^N$-torsion point in 
$\mathfrak{g}^{-1}p^{-N}\Gamma/\Gamma\cong E(\C)[\mathfrak{g}p^N]$, we have}
&=\sum_{u\in (\mathcal{O}_{K}/(\pi^N))^{\times}}\chi_{1}(u)
E_{m+1, n+1}^{\operatorname{col}}(\pi(\widetilde{u}+\pi^N(\widetilde{\alpha_0}+\widetilde{\gamma}))).&
\end{align*}
Since $\chi_{1}(a)\neq 1$, this lemma holds.
\end{proof}
We order Proposition \ref{redefinition of twisted p-adic L-function} by gathering toward a character on $(\mathcal{O}_{K}/\mathfrak{g}')^{\times}$.
First we gather $\chi_{\mathfrak{g}}, \chi_{1}, \chi_{2}$ by solving the following simultaneous congruences of first degree:
$$
z\equiv \alpha_{0}\ (\operatorname{mod} \mathfrak{g}),\  z\equiv u\ (\operatorname{mod} \pi^N),\
z\equiv \gamma\ (\operatorname{mod} \overline{\pi}^N). 
$$
We assume that $z=s_1\in \mathcal{O}_{K}$ is a solution of $p^N z=\pi^N \overline{\pi}^N z\equiv 1(\operatorname{mod} \mathfrak{g})$.
The existence of this solution follows from that $\mathfrak{g}$ is prime to $p$. Similarly, we assume that $z=s_2\in \mathcal{O}_{K}$ (resp. $z=s_3\in \mathcal{O}_{K}$)
is a solution of $g\overline{\pi}^N z\equiv 1 (\operatorname{mod} \pi^N)$ (resp. $g \pi^N z\equiv 1 (\operatorname{mod} \overline{\pi}^N)$).
If we put $z=\alpha_0 p^N s_1+ug\overline{\pi}^N s_2+\gamma g\pi^N s_3$, 
$z$ satisfies simultaneously $z\equiv \alpha_{0}\ (\operatorname{mod} \mathfrak{g}),\  z\equiv u\ (\operatorname{mod} \pi^N),\
z\equiv \gamma\ (\operatorname{mod} \overline{\pi}^N)$. Therefore the solution of the given simultaneous congruences of first degree is
$$
z\equiv \alpha_0 p^N s_1+ug\overline{\pi}^N s_2+\gamma g\pi^N s_3\ (\operatorname{mod} \mathfrak{g}p^N).
$$
Recalling $\chi=\chi_{\mathfrak{g}}\chi_{1}\chi_{2}$ and $\mathfrak{g}'=\mathfrak{g}p^N$, Proposition \ref{redefinition of twisted p-adic L-function} is
\begin{align*}
\frac{L_{p}(\varphi_{p})}{\Omega_{\mathfrak{p}}^{n-m}}=
\frac{g^{-1} n! (-1)^{m+n+1}}{\tau(\overline{\chi_{1}})\overline{\pi}^N}
\sum_{z\in (\mathcal{O}_{K}/\mathfrak{g}')^{\times}}\chi(z)
E_{m+1, n+1}^{\operatorname{col}}(\widetilde{u}+\pi^N(\widetilde{\alpha_0}+\widetilde{\gamma})).
\end{align*}
Next, we express the contents $\widetilde{u}+\pi^N(\widetilde{\alpha_0}+\widetilde{\gamma})$ of $E_{m+1, n+1}^{\operatorname{col}}$ by the formula of $z$.
We find a constant $C$ satisfying
$$
u+\pi^N (\alpha_0+\gamma)=Cz=C(\alpha_0 p^N s_1+ug\overline{\pi}^N s_2+\gamma g\pi^N s_3)
$$
in $\mathcal{O}_{K}/\mathfrak{g}'$. Since $u$, $\alpha_0$, and $\gamma$ are arbitrary, comparing respectively coefficients of $u$, $\alpha_0$, and $\gamma$,
we have
$$
1=Cg\overline{\pi}^N s_2,\quad \pi^N=Cp^Ns_1,\quad \pi^N=Cg\pi^N s_3
$$
in $\mathcal{O}_{K}/\mathfrak{g}'$.
Since $s_1$, $s_2$, and $s_3$ respectively satisfies $p^N s_1\equiv 1(\operatorname{mod} \mathfrak{g})$,
 $g\overline{\pi}^N s_2\equiv 1 (\operatorname{mod} \pi^N)$, and 
 $g \pi^N s_3\equiv 1 (\operatorname{mod} \overline{\pi}^N)$, we have the following simultaneous congruences of first degree:
$$
C\equiv 1(\operatorname{mod} \pi^N),\quad C\equiv \pi^N (\operatorname{mod}\mathfrak{g}), \quad C\equiv \pi^N (\operatorname{mod}\overline{\pi}^N).
$$
Solving these simultaneous congruences of first degree, for the above $s_1, s_2$, and $s_3$, we have
$$
C\equiv \pi^N p^N s_{1}+g\overline{\pi}^N s_{2}+g\pi^{2N}s_{3}(\operatorname{mod}\mathfrak{g}p^N).
$$
Now if we put $\xi_{\mathfrak{g}'}:=i_{*}(\Omega( \pi^N p^N s_{1}+g\overline{\pi}^N s_{2}+g\pi^{2N}s_{3})/gp^N)=i_{*}(\Omega C/gp^N)$,
$\xi_{\mathfrak{g}'}$ is a primitive $\mathfrak{g}'$-torsion point.
$\xi_{\mathfrak{g}'}$ is the primitive $\mathfrak{g}'$-torsion point in $E(\C_{p})$, which is the special case 
when we substitute $\alpha_0=\pi^N$, $u=1$, and $\gamma=\pi^N$
for $z\equiv \alpha_0 p^N s_1+ug\overline{\pi}^N s_2+\gamma g\pi^N s_3 (\operatorname{mod} \mathfrak{g}p^N)$.
Then
we have
\begin{Thm}[Main Theorem]
Let $m,n$ be any integers with $n\geq 0$.
For the above symbols, 
we have
\begin{align*}
\frac{L_{p}(\varphi_{p})}{\Omega_{\mathfrak{p}}^{n-m}}=
\frac{g^{-1} n! (-1)^{m+n+1}}{\tau(\overline{\chi_{1}})\overline{\pi}^N}
\sum_{z\in (\mathcal{O}_{K}/\mathfrak{g}')^{\times}}\chi(z)
E_{m+1, n+1}^{\operatorname{col}}(\xi_{\mathfrak{g}'} z),
\end{align*}
where $\tau(\overline{\chi_{1}})$ is the Gauss sum defined by Lemma \ref{Properties of Gauss sum}.
\label{My Main Theorem}
\end{Thm}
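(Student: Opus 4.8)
The plan is to derive the Main Theorem directly from Proposition \ref{redefinition of twisted p-adic L-function}, which already expresses the normalized value $L_{p}(\varphi_{p})/\Omega_{\mathfrak{p}}^{n-m}$ as a triple character sum of the Coleman Eisenstein-Kronecker series $E_{m+1,n+1}^{\operatorname{col}}$ evaluated at the torsion point $\widetilde{u}+\pi^{N}(\widetilde{\alpha_{0}}+\widetilde{\gamma})$, with the prefactor $g^{-1}n!(-1)^{m+n+1}/(\tau(\overline{\chi_{1}})\overline{\pi}^{N})$ already matching the target. Since all the analytic input — the passage through the two-variable measure, the identification as a Coleman function in Lemma \ref{expressed as a Coleman function}, and the vanishing in Lemma \ref{cancel} — is absorbed into that proposition, the only remaining work is combinatorial: to collapse the three independent sums over $\alpha_{0}\in(\mathcal{O}_{K}/\mathfrak{g})^{\times}$, $u\in(\mathcal{O}_{K}/(\pi^{N}))^{\times}$, and $\gamma\in(\mathcal{O}_{K}/(\overline{\pi}^{N}))^{\times}$ into a single sum over $z\in(\mathcal{O}_{K}/\mathfrak{g}')^{\times}$, and then to recognize the argument as a scalar multiple $\xi_{\mathfrak{g}'}z$ of one fixed primitive $\mathfrak{g}'$-torsion point.

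First I would invoke the Chinese remainder isomorphism $(\mathcal{O}_{K}/\mathfrak{g}')^{\times}\cong(\mathcal{O}_{K}/\mathfrak{g})^{\times}\times(\mathcal{O}_{K}/(\pi^{N}))^{\times}\times(\mathcal{O}_{K}/(\overline{\pi}^{N}))^{\times}$ from \S0, together with the factorization $\chi=\chi_{\mathfrak{g}}\chi_{1}\chi_{2}$. Concretely, choosing auxiliary solutions $s_{1},s_{2},s_{3}$ of $p^{N}s_{1}\equiv1\ (\operatorname{mod}\mathfrak{g})$, $g\overline{\pi}^{N}s_{2}\equiv1\ (\operatorname{mod}\pi^{N})$, $g\pi^{N}s_{3}\equiv1\ (\operatorname{mod}\overline{\pi}^{N})$ — which exist because $g$ is prime to $p$ — the simultaneous congruences $z\equiv\alpha_{0}\ (\operatorname{mod}\mathfrak{g})$, $z\equiv u\ (\operatorname{mod}\pi^{N})$, $z\equiv\gamma\ (\operatorname{mod}\overline{\pi}^{N})$ solve to $z\equiv\alpha_{0}p^{N}s_{1}+ug\overline{\pi}^{N}s_{2}+\gamma g\pi^{N}s_{3}\ (\operatorname{mod}\mathfrak{g}p^{N})$. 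Under this dictionary $\chi_{\mathfrak{g}}(\alpha_{0})\chi_{1}(u)\chi_{2}(\gamma)=\chi(z)$, so the triple sum becomes $\sum_{z\in(\mathcal{O}_{K}/\mathfrak{g}')^{\times}}\chi(z)(\cdots)$.

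The decisive step is then to express the argument $\widetilde{u}+\pi^{N}(\widetilde{\alpha_{0}}+\widetilde{\gamma})$ uniformly as $\xi_{\mathfrak{g}'}z$, where all quantities are read inside $\mathfrak{g}'^{-1}\Gamma/\Gamma\cong E(\overline{K})[\mathfrak{g}']$ and transported to $E(\mathbb{C}_{p})$ via $i_{*}$. I would seek a constant $C$ with $u+\pi^{N}(\alpha_{0}+\gamma)\equiv Cz\ (\operatorname{mod}\mathfrak{g}')$; since $u,\alpha_{0},\gamma$ are arbitrary, comparing their coefficients forces $C\equiv1\ (\operatorname{mod}\pi^{N})$, $C\equiv\pi^{N}\ (\operatorname{mod}\mathfrak{g})$, $C\equiv\pi^{N}\ (\operatorname{mod}\overline{\pi}^{N})$, which solve to $C\equiv\pi^{N}p^{N}s_{1}+g\overline{\pi}^{N}s_{2}+g\pi^{2N}s_{3}\ (\operatorname{mod}\mathfrak{g}p^{N})$. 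Setting $\xi_{\mathfrak{g}'}:=i_{*}(\Omega C/gp^{N})$ then yields the stated formula, and $\xi_{\mathfrak{g}'}$ is primitive because it is exactly the value of the parametrized torsion points at $\alpha_{0}=\pi^{N}$, $u=1$, $\gamma=\pi^{N}$.

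The main obstacle I anticipate is the torsion-point bookkeeping in this last step. One must track three distinct liftings — the $\times\pi^{N}$ surjection $E(\overline{K})[\mathfrak{g}p^{N}]\twoheadrightarrow E(\overline{K})[\mathfrak{g}\overline{\pi}^{N}]$ used to define $\widetilde{\alpha_{0}}$ and $\widetilde{\gamma}$, versus the direct embedding defining $\widetilde{u}$ — and verify that a single $C$ renders the combination $\widetilde{u}+\pi^{N}(\widetilde{\alpha_{0}}+\widetilde{\gamma})$ equal to $\xi_{\mathfrak{g}'}z$ consistently in $\mathcal{O}_{K}/\mathfrak{g}'$. No further convergence or Coleman-integration argument is required; the delicacy lies entirely in this elementary but careful congruence matching, and once it is checked the theorem follows at once from Proposition \ref{redefinition of twisted p-adic L-function}.
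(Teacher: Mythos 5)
Your proposal follows essentially the same route as the paper: the paper likewise derives the Main Theorem from Proposition \ref{redefinition of twisted p-adic L-function} by solving the simultaneous congruences with the auxiliary $s_{1},s_{2},s_{3}$ to collapse the triple sum via $\chi=\chi_{\mathfrak{g}}\chi_{1}\chi_{2}$, then determines the constant $C\equiv\pi^{N}p^{N}s_{1}+g\overline{\pi}^{N}s_{2}+g\pi^{2N}s_{3}\ (\operatorname{mod}\mathfrak{g}p^{N})$ by exactly the coefficient comparison you describe and sets $\xi_{\mathfrak{g}'}:=i_{*}(\Omega C/gp^{N})$. The argument is correct and matches the paper's in every step.
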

Main Theorem is the $p$-adic analogue of the relation of the complex Hecke $L$-function and the classical Eisenstein-Kronecker series
\begin{equation}
L_{\mathfrak{g}'}(s, \varphi)=\frac{1}{w_{\mathfrak{g}'}}K_{|m-n|}^{*}(\alpha, 0, s-\min\{m,n \}; ((\alpha)^{-1}\mathfrak{g}')^{\delta}),
\label{Hecke L-function and Eisenstein-Kronecker series}
\end{equation}
where $w_{\mathfrak{g}'}$ is the number of roots of 1 in $\mathcal{O}_{K}^{\times}$ congruent to 1 modulo $\mathfrak{g}'$,
$\alpha$ is any element of $\mathfrak{a}^{-1}$ in (\ref{The classical complex Hecke L-function}) such that $\alpha\equiv 1\ (\operatorname{mod} \mathfrak{g}')$ and
$\delta\in \operatorname{Gal}(\C/\R)$ is an element satisfying that $\delta$ is trivial if and only if $m-n>0$ (see \cite{BK} Proposition 1.6).

\section*{Acknowledgements}
The result of this article is an extended version of the result of my master's thesis \cite{H}.
This research was supported by KAKENHI 21674001. 
Thanks to this KAKENHI, I could promote this research smoothly.
I wish to thank my professor K. Bannai and other post doctors for having sincerely advised and led when I was in trouble.

\newpage


\begin{thebibliography}{99}
\bibitem{BK}
K. Bannai, S. Kobayashi, Algebraic theta functions and the $p$-adic interpolation of Eisenstein-Kronecker numbers, Duke Math Journal, 153. (2010),  229-295.
\bibitem{BKT}
K. Bannai, S. Kobayashi, T. Tsuji, On the de Rham and $p$-adic realizations of the elliptic polylogarithm for CM elliptic curves, 
Annales scientifiques de l'ENS 43, fascicule 2 (2010), 185-234
\bibitem{BKi}
K. Bannai, G. Kings, $p$-adic Beilinson conjecture for ordinary Hecke motives associated to imaginary quadratic fields, 
RIMS Kokyuroku Bessatsu B25: Algebraic Number Theory and Related Topics 2009, eds. T. Ichikawa, M. Kida, T. Yamazaki, June (2011),  9-30.
\bibitem{BFK}
K. Bannai, H. Furusho, S. Kobayashi, $p$-adic Eisenstein-Kronecker function and the elliptic polylogarithm for CM elliptic curves, 
preprint, 2008, arXiv:0807.4007v1 [math.NT]
\bibitem{Beilinson}
A. A. Beilinson, Higher regulators and values of $L$-functions, J. Sov. Math., 30, 2036-2070, 1985.
\bibitem{Ber}
P. Berthelot. Cohomologie rigide et cohomologie rigide \`{a} supports propres, premi\'{e}re partie. Institut de recherche math. de Rennes, 1996
\bibitem{Bes0}
A. Besser, Syntomic Regulators and $p$-adic Integration II: $K2$ of Curves, Israel Journal of Mathematics
December 2000, Volume 120, Issue 1, pp. 335-359
\bibitem{Bes1}
A. Besser, A generalization of Coleman's $p$-adic integration theory, Inv. Math. 142, pp.397-434, 2000
\bibitem{Blo}
S. Bloch, Higher Regulators,  Algebraic $K$-theory, and Zeta functions of Elliptic Curves, CRM MONOGRAPH SERIES Vol 11, American Math. Society, 2000
\bibitem{BGR}
S. Bosch, U. G\"{u}ntzer, R. Remmert, Non-archimedean analysis, Grundlehren der math. Wissenschaften 261, Springer Verlag,1984
\bibitem{Bre}
C. Breuil, Int\'{e}gration sur les vari\'{e}t\'{e}s $p$-adiques [d'apr\`{e}s Coleman, Colmez], S\'{e}minaire N. Bourbaki, pp.319- 350,1998-1999
\bibitem{CdS}
R. Coleman and E. de Shalit, $p$-adic regulators on curves and special values on $p$-adic $L$-function, Inv. Math. 93, pp.239-266, 1988
\bibitem{Col1}
R. Coleman, Dilogarithms, regulators, and $p$-adic $L$-functions, Inv. Math. 69, pp.171-208, 1982
\bibitem{De}
P. Deligne, Valeurs de Fonctions $L$ et P\'{e}riodes D'int\'{e}grales, Proceedings of Symposia in Pure Mathematics, Vol 33. (1979), part 2, pp. 313-346.
\bibitem{dS}
E. de Shalit, Iwasawa theory of elliptic curves with complex multiplication, Perspectives in Math. Vol 3, 1986
\bibitem{FdP}
J. Fresnel, M. van der Put, Rigid analytic geometry and its applications, Boston Birkh\"{a}user, 2003
\bibitem{Gros1}
M. Gros. R\'egulateurs syntomiques et valeurs de fonctions $L$ $p$-adiques. I. Invent. Math.,
99(2):293-320, 1990. With an appendix by Masato Kurihara.
\bibitem{Gros2}
M. Gros. R\'egulateurs syntomiques et valeurs de fonctions $L$ $p$-adiques. II. Invent. Math.,
115(1):61-79, 1994.
\bibitem{H}
T. Hirotsune, A Study on the Relation of Special Values of $p$-adic $L$-functions to $p$-adic Eisenstein-Kronecker Series,
Master's Thesis, Keio University, March 2012.
\bibitem{Iwa}
K. Iwasawa. Lectures on $p$-adic $L$-functions, Annals of Math. Studies 74, Princeton University Press, 1972 
\bibitem{Katz2}
N. Katz, $p$-adic interpolation of real analytic Eisenstein series, Ann. of Math., pp. 459-571, 1976
\bibitem{Lan2}
S. Lang, Elliptic Functions, Graduate Texts in Math. 112, Springer-Verlag, 1973
\bibitem{Lu}
J. Lubin, One-parameter formal Lie groups over $p$-adic integer rings, Annals of Mathematics 80 (1964), 464-484.
\bibitem{MaVi}
Ju. I. Manin, S. Vi\v{s}ik, $p$-adic Hecke series for quadratic imaginary fields, Math. Sbornik, V. 95(137), No.3(11), 1974
\bibitem{Perrin}
B. Perrin-Riou. Fonctions $L$ $p$-adiques des repr\'esentations $p$-adiques. Ast\'erisque, (229), 1995.
\bibitem{Sil1} 
J. Silverman, The Arithmetic of Elliptic Curves, Graduate Texts in Math. 106 2nd edition, Springer, 2008
\bibitem{Sil2} 
J. Silverman, Advanced topics in the Arithmetic of elliptic curves, Graduate Texts in Math. 151, Springer, 1994 
\bibitem{Ta2}
J. Tate, $p$-divisible groups, Proceedings of a Conference on Local Fields, Springer, 1967, pp. 158-183.
\bibitem{Ta}
J. Tate, Rigid analytic space, Inv. Math. 12, pp.257-289, 1971
\bibitem{We}
A. Weil, Elliptic functions according to Eisenstein and Kronecker, Ergebnisse der Mathematik Und Ihrer Grenzgebiete 88, 
\bibitem{Yag}
R. I. Yager, On two variable $p$-adic $L$-functions, Ann. of Math. 115, 411-449, 1982
\bibitem{Zag}
D. Zagier, The Bloch-Wigner-Ramakrishnan polylogarithm function, Math. Ann. 286, 613-624 (1990)

\end{thebibliography}
\end{document}